\numberwithin{equation}{section} 
\numberwithin{table}{section}
\theoremstyle{plain}
\newtheorem{theorem}{Theorem}[section]
\newtheorem{proposition}[theorem]{Proposition}
\newtheorem{lemma}[theorem]{Lemma}
\newtheorem{corollary}[theorem]{Corollary}
\newtheorem{subprop}{Proposition}[subsection]
\theoremstyle{definition}
\newtheorem{nota}[theorem]{Notation}
\newtheorem{assume}[theorem]{Assumption}
\theoremstyle{remark}
\newtheorem{remark}[theorem]{Remark}
\newcommand{\HH}{\mathrm{H}}
\begin{document}

\title[Holomorphic poly-differentials of curves]{The Galois module structure of holomorphic poly-differentials and Riemann-Roch spaces}

\author[F. Bleher]{Frauke M. Bleher}
\address{F.B.: Department of Mathematics\\University of Iowa\\
14 MacLean Hall\\Iowa City, IA 52242-1419\\ U.S.A.}
\email{frauke-bleher@uiowa.edu}
\thanks{Both authors were supported in part by NSF Grant No.\ DMS-1801328. Frauke M. Bleher is the corresponding author.}

\author[A. Wood]{Adam Wood}
\address{A.W.: Department of Mathematics, Statistics, and Computer Science\\St. Olaf College\\1520 St. Olaf Avenue\\Northfield, MN 55057\\ U.S.A.}
\curraddr{Mathematics Department\\Breck School\\123 Ottawa Ave N\\ Golden Valley\\ MN 55422\\U.S.A.}
\email{acw8794@gmail.com}

\date{March 18, 2023}

\subjclass[2010]{Primary 11G20; Secondary 14H05, 14G17, 20C20}

\begin{abstract}
Suppose $X$ is a smooth projective geometrically irreducible curve over a perfect field $k$ of positive characteristic $p$.  Let $G$ be a finite group acting faithfully on $X$ over $k$ such that $G$ has non-trivial, cyclic Sylow $p$-subgroups.  If $E$ is a $G$-invariant Weil divisor on $X$ with $\mathrm{deg}(E)> 2g(X)-2$, we prove that the decomposition of $\HH^0(X,\mathcal{O}_X(E))$ into a direct sum of indecomposable $kG$-modules is uniquely determined by the class of $E$ modulo $G$-invariant principal divisors, together with the ramification data of the cover $X\to X/G$. The latter is given by the lower ramification groups and the fundamental characters of the closed points of $X$ that are ramified in the cover. As a consequence, we obtain that if $m>1$ and $g(X)\ge 2$, then the $kG$-module structure of  $\HH^0(X,\Omega_X^{\otimes m})$ is uniquely determined by the class of a canonical divisor on $X/G$ modulo principal divisors, together with the ramification data of $X\to X/G$.
This extends to arbitrary $m > 1$ the $m = 1$ case treated by the first author with T. Chinburg and A. Kontogeorgis.  
We discuss applications to the tangent space of the global deformation functor associated to $(X,G)$ and to congruences between prime level cusp forms in characteristic $0$. In particular, we complete the description of the precise $k\mathrm{PSL}(2,\mathbb{F}_\ell)$-module structure of all prime level $\ell$ cusp forms of even weight in characteristic $p=3$. 
\end{abstract}

\maketitle

\section{Introduction}
\label{s:intro}

Let $k$ be a perfect field, let $X$ be a smooth projective geometrically irreducible curve over $k$ of genus $g(X)$, and let $G$ be a finite group acting faithfully on the right on $X$ over $k$. As in \cite[\S IV.1]{Hartshorne1977}, we denote by $\Omega_X$ the sheaf of relative differentials of $X$ over $k$. Moreover, for every positive integer $m$, we let $\Omega_X^{\otimes m}$ be the $m$-fold tensor product of $\Omega_X$ with itself over $\mathcal{O}_X$. The group $G$ acts on the left on $\Omega_X^{\otimes m}$ and also on $\HH^0(X,\Omega_X^{\otimes m})$. We call the latter the space of holomorphic $m$-differentials of $X$. It is a classical problem, first posed by Hecke in \cite{Hecke1928}, to determine how $\HH^0(X,\Omega_X^{\otimes m})$ decomposes into a direct sum of indecomposable $kG$-modules and to give an explicit description of these indecomposables. When $k$ is algebraically closed and its characteristic does not divide $\#G$, this was solved by Chevalley and Weil in \cite{ChevalleyWeil1934} (see also \cite{Hurwitz1892}).

For the remainder of the paper, we assume that the characteristic of the perfect field $k$ is a prime number $p$ that divides $\#G$. Several authors have studied the $kG$-module structure of $\HH^0(X,\Omega_X^{\otimes m})$ under various assumptions on the group $G$, the parameter $m$, and the ramification of the cover $X \to X/G$. Their research has often focused on cyclic groups or abelian $p$-groups or the case when $m = 1$. 
See \cite{ValentiniMadan1981,Kani1986,Nakajima1986,RzedowskiCVillaSMadan1996,Kock2004,Borne2006,Karanikolopoulos2012,KaranikolopoulosKontogeorgis2013,MarquesWard2018,BleherChinburgKontogeorgis2020} for a sample of previous results. 

We would like to point out two of these articles, as they are closely related to our results. In \cite{Karanikolopoulos2012}, Karanikolopoulos determined the $kG$-module structure of $\HH^0(X,\Omega_X^{\otimes m})$ for $m > 1$ when $G$ is a finite cyclic $p$-group.
The main tool in \cite{Karanikolopoulos2012} was the use of so-called Boseck invariants to find a suitable $k$-basis of $\HH^0(X,\Omega_X^{\otimes m})$. In \cite{BleherChinburgKontogeorgis2020}, the authors took a more geometric approach to determine the $kG$-module structure of $\HH^0(X,\Omega_X)$ when $G$ has cyclic Sylow $p$-subgroups. 
It is the latter approach we will use in this paper. 

Let $K_{X/G}$ be a canonical divisor on $X/G$, and denote the $G$-cover $X\to X/G$ by $\gamma$ and its ramification divisor by $\mathrm{Ram}_\gamma$. Then $K_X=\gamma^*K_{X/G}+\mathrm{Ram}_\gamma$ is a $G$-invariant canonical divisor on $X$ (see \cite[Prop. IV.2.3]{Hartshorne1977}), and $\Omega_X^{\otimes m}\cong \mathcal{O}_X(mK_X)$ as $\mathcal{O}_X$-$G$-modules.
If $m>1$ and $g(X)\ge 2$ then $\mathrm{deg}(mK_X)> 2 g(X)-2$. Hence, it is natural to consider the more general case of the Riemann-Roch space $\HH^0(X,\mathcal{O}_X(E))$ associated to a $G$-invariant Weil divisor $E$ on $X$ satisfying $\mathrm{deg}(E)> 2 g(X)-2$.

This leads to our first main result.
By the $G$-divisor class of $E$, we mean the equivalence class of all Weil divisors $E'$ on $X$ such that $E-E'$ is the divisor of a $G$-invariant function in $k(X)^*$.

\begin{theorem}
\label{thm:newmain}
Suppose $G$ has non-trivial cyclic Sylow $p$-subgroups. Let $E$ be a $G$-invariant Weil divisor on $X$ with $\mathrm{deg}(E) > 2 g(X)-2$. Then the $kG$-module structure of $\HH^0(X,\mathcal{O}_X(E))$ is uniquely determined by the $G$-divisor class of $E$, together with the lower ramification groups and the fundamental characters of the closed points of $X$ that are ramified in the cover $X \to X/G$.
\end{theorem}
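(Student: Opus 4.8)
The plan is to reduce the statement to a local-global principle built on the decomposition of $\HH^0(X,\mathcal{O}_X(E))$ into indecomposable $kG$-modules, where the indecomposables are controlled by two pieces of data: the Brauer character (equivalently, the class in the Grothendieck group $K_0(kG)$ or the representation on $\HH^0(X,\mathcal{O}_X(E))\otimes_k \overline{k}$ after tensoring up) on the one hand, and the ``local'' multiplicities at the ramified points on the other. Since $G$ has cyclic Sylow $p$-subgroups, every indecomposable $kG$-module has bounded $p$-rank, and the indecomposables are classified (Green's theory, via the Brauer tree) so that an indecomposable is determined by its image in $K_0(kG)$ together with finitely many further numerical invariants. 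The first step, therefore, is to show that both of these controlling invariants depend only on the $G$-divisor class of $E$ and on the ramification data.

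\textbf{Step 1: The class in $K_0(kG)$.} I would compute the class $[\HH^0(X,\mathcal{O}_X(E))]$ in $K_0(kG)$, or better, after lifting, the equivariant Euler characteristic $[\HH^0(X,\mathcal{O}_X(E))] - [\HH^1(X,\mathcal{O}_X(E))]$ in the Grothendieck group of $kG$-modules; since $\deg(E) > 2g(X)-2$ the $\HH^1$ term vanishes, so this is just $[\HH^0(X,\mathcal{O}_X(E))]$ itself. By the equivariant Riemann-Roch theorem (Chevalley–Weil type formula, or Nakajima's / Kani's formula for the Brauer character), this Euler characteristic is expressible purely in terms of $\deg(E/G)$ (equivalently the divisor class of the pushforward on $X/G$, which is determined by the $G$-divisor class of $E$), the genus $g(X/G)$, and for each ramified closed point the inertia group, the lower ramification filtration, and the fundamental character governing the action of inertia on the cotangent space — exactly the listed ramification data. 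This pins down $[\HH^0(X,\mathcal{O}_X(E))]$ in $K_0(kG)$.

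\textbf{Step 2: The local multiplicities.} The class in $K_0(kG)$ does not by itself determine the module up to isomorphism (this is where characteristic $p$ genuinely differs from the Chevalley–Weil situation). For groups with cyclic Sylow $p$-subgroups one extra invariant per ``exceptional'' vertex of the Brauer tree suffices; concretely, I would show that the extra data is captured by the restriction of $\HH^0(X,\mathcal{O}_X(E))$ to a Sylow $p$-subgroup $P$, or to the decomposition group/inertia group at a ramified point, and that this local module is determined by the ramification data at that point (the lower ramification groups and fundamental character describe the $P$-action on the completed local ring and hence on the local contribution to global sections, via a Boseck-invariant / valuation-theoretic analysis as in Karanikolopoulos for cyclic $p$-groups, or as in Bleher–Chinburg–Kontogeorgis for $m=1$). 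The key geometric input is a $G$-equivariant filtration or covering of $X$ by $G$-stable affine opens together with the fact that away from ramification the module is locally free over $k[G]$-type rings, so that all the ``interesting'' indecomposable summands are detected locally at ramified points. Combining: the class in $K_0(kG)$ together with these local restrictions determines each indecomposable summand and its multiplicity, by the classification of $kG$-modules for $G$ with cyclic Sylow $p$-subgroups.

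\textbf{The main obstacle} I anticipate is Step 2 — showing that the local ramification data genuinely suffices to pin down the non-projective part of the module, not merely its class in $K_0(kG)$. The subtlety is that a priori the global geometry of $X$ (beyond what is recorded in $g(X/G)$ and ramification) could influence which indecomposables appear; one must prove a rigidity statement saying it does not. I expect this to be handled by a dévissage: write $\mathcal{O}_X(E) = \mathcal{O}_X(E') \oplus (\text{projective part})$ at the level of cohomology by peeling off a large multiple of a $G$-invariant very ample divisor — increasing $\deg(E)$ changes $\HH^0$ only by adding projective (i.e. free-over-the-relevant-local-ring) summands, by a vanishing/base-change argument — and reduce to a ``minimal'' case where the module has no projective summands and is therefore determined by its $K_0(kG)$-class plus the inertia-restrictions, each of which was shown above to be a function of the allowed data. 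The bookkeeping that the peeled-off summands are projective, and that projectivity is controlled by degree plus ramification, is the technical heart; I would expect the paper to isolate this as a separate lemma about how $\HH^0$ varies as $E$ grows within its $G$-divisor class's ``ray.''
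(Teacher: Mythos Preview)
Your plan diverges substantially from the paper's argument, and Step~2 as sketched has a genuine gap. The paper does \emph{not} work with the class in $K_0(kG)$ plus local corrections at ramified points; instead it first reduces (via the Conlon induction theorem) to the case where $G$ is $p$-hypo-elementary and $k$ is algebraically closed, and then proves an explicit algorithm (Theorem~\ref{thm:main3}) in that case. The mechanism for the $p$-hypo-elementary case is a filtration of $\pi_*\mathcal{O}_X(E)$ by powers of the Jacobson radical of $kI$, where $I$ is the maximal $p$-inertia subgroup and $\pi:X\to Y=X/I$; the graded pieces are identified with explicit line bundles $S_{\chi^{-j}}\otimes_k\mathcal{O}_Y(E_j)$ on $Y$, with $E_j$ computed from the ramification data (Proposition~\ref{prop:filtersheaf}). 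The crucial technical point---the one you would need and do not supply---is that $\mathrm{deg}(E_j)>2g(Y)-2$ for every $j$, so that $\HH^1(Y,\mathcal{O}_Y(E_j))=0$; this vanishing is what lets the filtration on the sheaf pass to a filtration on global sections (Lemma~\ref{lem:dimensionargument}). One then finishes with Nakajima's theorem applied to the \emph{tame} cover $Y\to X/G$.

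Your d\'evissage idea---increasing $\deg(E)$ along a ray and asserting that only projective summands are added---is not justified and is not what the paper does. There is no a priori reason the difference $[\HH^0(X,\mathcal{O}_X(E'))]-[\HH^0(X,\mathcal{O}_X(E))]$ should be projective for $E'\ge E$ in the same $G$-divisor class; indeed the non-projective summands in Theorem~\ref{thm:main3} depend delicately on the residues of the local coefficients $e_x$ modulo ramification indices, and these change as $E$ grows. Likewise, your Step~1 invokes Nakajima/Kani for the Brauer character, but those results are for tame covers; the paper only applies Nakajima after passing to the tame quotient $Y\to X/G$, which is exactly what the radical filtration accomplishes. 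The missing structural ingredient in your plan is the Conlon induction reduction (which replaces your Brauer-tree bookkeeping for general $G$ by the much simpler uniserial picture for $p$-hypo-elementary $G$), and the missing analytic ingredient is the $\HH^1$-vanishing on every graded piece of the radical filtration.
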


The $G$-divisor class of the canonical divisor $K_X=\gamma^*K_{X/G}+\mathrm{Ram}_\gamma$ is uniquely determined by the class of $K_{X/G}$ modulo principal divisors on $X/G$. Therefore, we obtain as a consequence of Theorem \ref{thm:newmain} our second main result, which extends \cite[Thm. 1.1]{BleherChinburgKontogeorgis2020} from $m=1$ to arbitrary $m>1$. Since for all $m\ge 1$, $g(X)=0$ implies $\HH^0(X,\Omega_X^{\otimes m})=0$ and $g(X)=1$ implies $\HH^0(X,\Omega_X^{\otimes m})=k$ with trivial $G$-action, the assumption $g(X)\ge 2$ is not a real restriction. 

\begin{corollary}
\label{thm:oldmain}
Suppose $m>1$ and $g(X)\ge 2$, and that $G$ has non-trivial cyclic Sylow $p$-subgroups. The $kG$-module structure of $\HH^0(X,\Omega_X^{\otimes m})$ is uniquely determined by the  class of canonical divisors on $X/G$ modulo principal divisors, together with the lower ramification groups and the fundamental characters of the closed points of $X$ that are ramified in the cover $X\to X/G$. 
\end{corollary}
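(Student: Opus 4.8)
The plan is to deduce Corollary~\ref{thm:oldmain} directly from Theorem~\ref{thm:newmain} by taking $E = mK_X$, where $K_X = \gamma^*K_{X/G} + \mathrm{Ram}_\gamma$ is the $G$-invariant canonical divisor fixed in the introduction. First I would record that $E$ satisfies the hypotheses of Theorem~\ref{thm:newmain}: it is $G$-invariant because $K_X$ is, and $\mathrm{deg}(E) = m(2g(X)-2) = (m-1)(2g(X)-2) + (2g(X)-2) > 2g(X)-2$, since $m>1$ and $g(X)\ge 2$ force $(m-1)(2g(X)-2)\ge 2>0$. Using the $\mathcal{O}_X$-$G$-module isomorphism $\Omega_X^{\otimes m}\cong\mathcal{O}_X(mK_X)$ recalled above and passing to global sections, one obtains a $kG$-module isomorphism $\HH^0(X,\Omega_X^{\otimes m})\cong\HH^0(X,\mathcal{O}_X(E))$.

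By Theorem~\ref{thm:newmain}, the $kG$-module structure of $\HH^0(X,\mathcal{O}_X(E))$ is determined by the $G$-divisor class of $E=mK_X$ together with the lower ramification groups and fundamental characters of the closed points of $X$ ramified in $\gamma$. It therefore remains to check that the $G$-divisor class of $mK_X$ depends only on the linear equivalence class of $K_{X/G}$ on $X/G$; the ramification data is intrinsic to the cover and involves no choices. Suppose $K'_{X/G}$ is another canonical divisor on $X/G$, so $K_{X/G}-K'_{X/G}=\mathrm{div}(f)$ for some $f\in k(X/G)^*$. Then $\gamma^*f\in k(X)^*$ is $G$-invariant, being pulled back from the quotient, and
\[
mK_X - m\bigl(\gamma^*K'_{X/G}+\mathrm{Ram}_\gamma\bigr) = m\,\gamma^*\mathrm{div}(f) = \mathrm{div}\bigl((\gamma^*f)^m\bigr),
\]
which is the divisor of a $G$-invariant function. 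Hence $mK_X$ and $m(\gamma^*K'_{X/G}+\mathrm{Ram}_\gamma)$ lie in the same $G$-divisor class, so this class is unchanged when $K_{X/G}$ is replaced by any linearly equivalent divisor. Combining the two paragraphs gives the corollary.

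Since Corollary~\ref{thm:oldmain} is a formal consequence of Theorem~\ref{thm:newmain} once one has the equivariant identification $\Omega_X^{\otimes m}\cong\mathcal{O}_X(mK_X)$, there is no genuine obstacle here: the only points requiring care are the numerical verification that $\mathrm{deg}(mK_X)>2g(X)-2$ (which is precisely where the hypotheses $m>1$ and $g(X)\ge 2$ enter, and which fails for $m=1$, the case treated in \cite{BleherChinburgKontogeorgis2020}) and the elementary observation that pulling back a principal divisor on $X/G$ along $\gamma$ yields the divisor of a $G$-invariant function on $X$. All of the substantive content—the uniqueness of the indecomposable decomposition in terms of the $G$-divisor class and the ramification data—resides in Theorem~\ref{thm:newmain} itself.
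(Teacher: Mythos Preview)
Your proof is correct and follows the same approach as the paper: apply Theorem~\ref{thm:newmain} to $E=mK_X$, verify the degree hypothesis using $m>1$ and $g(X)\ge 2$, and observe that the $G$-divisor class of $mK_X$ depends only on the linear equivalence class of $K_{X/G}$ since $\mathrm{Ram}_\gamma$ is fixed by the ramification data and $\gamma^*$ carries principal divisors on $X/G$ to divisors of $G$-invariant functions on $X$. The paper states this deduction more tersely in the introduction and at the start of \S\ref{s:mainthm}, but your expanded verification of each step is accurate.
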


The statement of Corollary \ref{thm:oldmain} differs from \cite[Thm. 1.1]{BleherChinburgKontogeorgis2020} since if $m=1$ then the $kG$-module structure of $\HH^0(X,\Omega_X)$ does not depend on the class of canonical divisors on $X/G$.

When $m=2$, Corollary \ref{thm:oldmain} can be used to determine the dimension of the tangent space of the global deformation functor associated to the pair $(X,G)$ (see \S\ref{s:deformation}). 
Similarly to the case when $m=1$, Corollary \ref{thm:oldmain} also has implications for the study of classical modular forms. More precisely, for any prime number $\ell$ with $p\ne \ell \ge 7$, let $X(\Gamma_\ell)$ be the modular curve associated to the principal congruence subgroup $\Gamma_\ell$ of level $\ell$, and let $X_p(\ell)$ be the reduction of $X(\Gamma_\ell)$ in characteristic $p$. 
For all $p\ge 3$,
we obtain non-trivial congruences modulo appropriate maximal ideals over $p$ between level $\ell$ cusp forms in characteristic $0$ of even weight $2m>2$ such that these congruences arise from isotypic components with respect to the action of $\mathrm{PSL}(2,\mathbb{F}_\ell)$ on $X(\Gamma_\ell)$ (see Theorems \ref{thm:firstmodtheorem} and \ref{thm:secondmodtheorem}). To prove this when $p=3$, we 
determine for all $m>1$, the precise $k\mathrm{PSL}(2,\mathbb{F}_\ell)$-module structure of $\HH^0(X_3(\ell),\Omega_{X_3(\ell)}^{\otimes m})$ (see Theorem \ref{thm:modularresult} and \S\ref{s:modular3}). 

In particular, Theorem \ref{thm:modularresult} and Propositions \ref{prop:full_mixed} and \ref{prop:full_equal}, together with \cite[Thm. 1.4 and Props. 6.4.1 - 6.4.4]{BleherChinburgKontogeorgis2020}, give a complete solution to Hecke's classical problem  \cite{Hecke1928,ChevalleyWeil1934} of determining the precise module structure of all prime level cusp forms of even weight in the case when $k$ has characteristic $p=3$. 
Using \cite{Kani1986} or \cite{Nakajima1986} for $p>3$, it follows that the only remaining open case is when $p=2$.

The proof of Theorem \ref{thm:newmain} follows the main outline used in the proof of \cite[Thm. 1.1]{BleherChinburgKontogeorgis2020}. The first two steps are 
(I) to use the Conlon induction theorem \cite[Thm. (80.51)]{CRII} to reduce to the case when $G$ is $p$-hypo-elementary, and 
(II) to use the representation theory of $p$-hypo-elementary groups \cite[\S II.5-II.6]{Alperin1986} to reduce to the case when $k$ is algebraically closed
(see \cite[\S3]{BleherChinburgKontogeorgis2020} for details on these two reduction steps). 

Once these two reduction steps are made, Theorem \ref{thm:main3} below gives a precise algorithm of how to determine the $kG$-module structure of $\HH^0(X,\mathcal{O}_X(E))$ when $G$ is $p$-hypo-elementary and $k$ is algebraically closed. Since information about the module structure of Riemann-Roch spaces has been useful in the context of algebraic geometry codes, Theorem \ref{thm:main3} provides new insights into this subject. Previous work has mostly focused on the case when $kG$ is semisimple, see, for example, \cite{JoynerKsir2007}, \cite{GlassJoynerKsir2010}, \cite{CaroccaVasquez2019}. 

To state Theorem \ref{thm:main3}, we need the following assumptions and notations. 

\begin{assume}
\label{ass:main3}
Let $k$ be an algebraically closed field of prime characteristic $p$. Suppose $G=P\rtimes_{\chi}C$ is a $p$-hypo-elementary group, where $P=\langle \sigma\rangle$ is a cyclic $p$-group of order $p^n>1$, $C=\langle \rho\rangle$ is a cyclic $p'$-group of order $c$, and $\chi:C\to \mathbb{F}_p^*$ is a character which we also view as a character of $G$ by inflation. 
Let $X$ be a smooth projective curve over $k$, and suppose there is a faithful right action of $G$ on $X$ over $k$. 
Let $E$ be a $G$-invariant Weil divisor on $X$ with $\mathrm{deg}(E) > 2 g(X)-2$ and write $E=\sum_{x\in X}e_x x$. 
\end{assume} 

When $k$ and $G$ are as in Assumption $\ref{ass:main3}$ then there are precisely $\#G$ isomorphism classes of indecomposable $kG$-modules. Each simple $kG$-module has $k$-dimension one and it is uniquely determined by the action of $C$ on it. Moreover, each indecomposable $kG$-module $U$ has a simple socle, which is the kernel of the action of $(\sigma -1)$ on $U$, its radical is equal to $(\sigma-1)U$, and the isomorphism class of $U$ is uniquely determined by its socle and its $k$-dimension (see \cite[\S II.5-II.6]{Alperin1986} and also \cite[Remark 3.4]{BleherChinburgKontogeorgis2020}). In other words, $U$ is a uniserial $kG$-module, in the sense that it has a unique composition series. For a discussion of uniserial modules over Artin algebras, see \cite[\S IV.2]{AuslanderReitenSmalo1997}.

\begin{nota}
\label{not:main3ind}
Let $k$ and $G$ be as in Assumption $\ref{ass:main3}$. Let $\zeta\in k^*$ be a primitive $c^{\mathrm{th}}$ root of unity. 
\begin{enumerate}
\item[(a)] For $0\le a\le c-1$, let $S_a$ be a one-dimensional $k$-vector space on which $\rho$ acts as multiplication by $\zeta^a$, and view $S_a$ as a (simple) $kG$-module by inflation. 
\item[(b)] For $0\le a\le c-1$ and $1\le b\le p^n$, let $U_{a,b}$ be an indecomposable $kG$-module with socle $S_a$ and $k$-dimension $b$.
\item[(c)] For $i\in \mathbb{Z}$ and $0\le a\le c-1$, we define $\chi^i.a$ to be the unique element in $\{0,1,\ldots,c-1\}$ such that $S_{\chi^i.a} \cong S_{\chi^i}\otimes_k S_a$, where $S_{\chi^i}$ is the simple $kG$-module with character $\chi^i$. By \cite[\S II.5-II.6]{Alperin1986}, $U_{a,b}$ has ascending composition factors $S_a,S_{\chi^{-1}.a}, S_{\chi^{-2}.a},\ldots,S_{\chi^{-b+1}.a}$.
\end{enumerate}
\end{nota}

\begin{nota}
\label{not:main3}
Suppose $k$, $G$ and $X$ are as in Assumption $\ref{ass:main3}$.
\begin{enumerate}
\item[(a)] Let $I = \langle \tau \rangle$ be the (cyclic) subgroup of $P$ that is the greatest among the Sylow $p$-subgroups of the inertia groups of all closed points of $X$.  Write $\#I = p^{n_I}$, where $0\le n_I\le n$. 
\item[(b)] For any closed point $x$ on $X$ and  integer $i \ge 0$, let $I_{x,i}$ denote the $i^{\mathrm{th}}$ lower ramification subgroup of $I$ at $x$. 
In other words, $I_{x,i}$ is the group of all elements in $I$ that fix $x$ and act trivially on $\mathcal{O}_{X,x}/\mathfrak{m}_{X,x}^{i+1}$. 
The inertia group of $x$ in $I$ is $I_x=I_{x,0}$, and we write $\# I_x=p^{n_x}$. Since $I_x$ is cyclic, there are precisely $n_x$ jumps $1\le i_1(x)<i_2(x)<\cdots < i_{n_x}(x)$ in the numbering of the lower ramification groups $I_{x,i}$, $i\ge 0$ (see \cite[Chap. IV]{SerreCorpsLocaux1968}). We have $I_x=I_{x,0}=I_{x,i_1(x)}$,  $I_{x,i_j(x)}> I_{x,i_j(x)+1}=I_{x,i_{j+1}(x)}$ for $1\le j\le n_x-1$, and $I_{x,i_{n_x}(x)}>I_{x,i_{n_x}(x)+1}=1$. 
\item[(c)] Let $Y=X/I$, and let $\pi:X\to Y$ be the corresponding quotient morphism. Define $\overline{G}=G/I$, and let $Z=X/G=Y/\overline{G}$ with corresponding quotient morphism $\lambda:Y\to Z$. Let $Z_{\mathrm{br}}$ be the set of closed points in $Z$ that are branch points of the cover $\lambda$. 
\item[(d)] For $y\in Y$, let $\overline{G}_{y}$ be the inertia group of $y$ in $\overline{G}$, and let $\theta_y: \overline{G}_{y}\to k(y)^*=k^*$ be the fundamental character of $\overline{G}_{y}$. In other words, if $\pi_y$ is a uniformizer and $\mathfrak{m}_{Y,y}$ is the maximal ideal of  $\mathcal{O}_{Y,y}$ then
$\theta_y(\overline{g}) =  \frac{\overline{g}.\pi_y}{\pi_y} \mod \mathfrak{m}_{Y,y}$ for all $\overline{g}\in \overline{G}_y$. We write $\#\overline{G}_y=c_y$. Since $c_y$ divides $c$ and since we can view the modules from Notation \ref{not:main3ind}(a) as $k\overline{G}$-modules, there exists $\varphi(y)\in \{0,1,\ldots,c_y-1\}$  such that $\theta_y$ is the character of $\mathrm{Res}^{\overline{G}}_{\overline{G}_y}S_{\varphi(y)}$. For $0\le a\le c-1$ and $i\in\mathbb{Z}$, define $\mu_{a,i}(y)$ to be $1$ if $a\equiv i \,\varphi(y)\mod  c_y$ and to be $0$ otherwise. In other words, $\mu_{a,i}(y)=1$ if and only if $\theta_y^i$ is the character of $\mathrm{Res}^{\overline{G}}_{\overline{G}_y}S_a$.
\item[(e)] Let $K_Z$ be a canonical divisor on $Z$. Let $\mathrm{Ram}_\pi$ $($resp. $\mathrm{Ram}_\lambda$$)$ be the ramification divisor of the cover $\pi$ $($resp. $\lambda$$)$. By \cite[\S IV.2]{Hartshorne1977} and \cite[\S IV.1]{SerreCorpsLocaux1968}, $\mathrm{Ram}_\pi = \sum_{x\in X}\sum_{i\ge 0} \left(\# I_{x,i} - 1\right)x$ and $\mathrm{Ram}_\lambda=\sum_{y\in Y} (c_y-1)\,y$. Define $K_Y=\lambda^*K_Z + \mathrm{Ram}_\lambda$ and $K_X=\pi^*K_Y+\mathrm{Ram}_\pi$. By \cite[Prop. IV.2.3]{Hartshorne1977}, $K_Y$ is a $\overline{G}$-invariant canonical divisor on $Y$ and $K_X$ is a $G$-invariant canonical divisor on $X$.
\end{enumerate}
\end{nota}

\begin{theorem}
\label{thm:main3}
Suppose Assumption $\ref{ass:main3}$ holds, and assume Notations $\ref{not:main3ind}$ and $\ref{not:main3}$. 
Fix $0 \le j \le p^{n_I} - 1$, and define a $G$-invariant Weil divisor $E_j=\sum_{y\in Y} e_{y,j}\,y$ on $Y$
by
\begin{equation}
\label{eq:Ej}
e_{y,j}=\left\lfloor \frac{e_{x(y)}-\sum_{\ell=1}^{n_{x(y)}}a_{\ell,t}\,p^{n_{x(y)}-\ell}\,i_\ell(x(y))}{p^{n_{x(y)}}}\right\rfloor 
\end{equation}
where $x(y)\in X$ is a point above $y$, $t\ge 0$ is the unique integer such that $p^{n_I-n_{x(y)}}\,t \le j < p^{n_I-n_{x(y)}}(t+1)$, and $a_{1,t},\ldots, a_{n_{x(y)},t}\in\{0,1,\ldots,p-1\}$ are given by the $p$-adic expansion of $t$,
$$t = a_{1,t} + a_{2,t}\,p + \cdots + a_{n_{x(y)},t}\,p^{n_{x(y)}-1}.$$
For $z\in Z$, let $y(z)\in Y$ be such that $\lambda(y(z))=z$. Define $\ell_{y(z),j}\in\{0,1,\ldots, c_{y(z)}-1\}$ by
\begin{equation}
\label{eq:ellzj}
\ell_{y(z),j}\equiv e_{y(z),j}\mod c_{y(z)},
\end{equation}
and define
\begin{equation}
\label{eq:nj}
n_j = 1-g(Z)+\sum_{z\in Z} \frac{e_{y(z),j}-\ell_{y(z),j}}{c_{y(z)} } .
\end{equation}
For $0\le a\le c-1$, define
\begin{equation}
\label{eq:naj}
n(a,j) = \sum_{z\in Z_{\mathrm{br}}} \left( \sum_{d=1}^{\ell_{y(z),j}}\mu_{a,-d}(y(z)) -  \sum_{d=1}^{c_{y(z)}-1} \frac{d}{c_{y(z)}} \mu_{a,d}(y(z))\right) + n_j.
\end{equation}
Then $n(a,j)$ is a non-negative integer. For $1\le b\le p^n$, the number $n_{a,b}$ of indecomposable direct $kG$-module summands of $\HH^0(X,\mathcal{O}_X(E))$ that are isomorphic to $U_{a,b}$ is given as 
\begin{equation}
\label{eq:nab}
n_{a,b}=\left\{\begin{array}{cl}
n(a,j)-n(a,j+1)& \mbox{if $b=(j+1)\,p^{n-n_I}$ and $0\le j \le p^{n_I}-2$,}\\
n(a,p^{n_I}-1) & \mbox{if $b=p^{n_I}\,p^{n-n_I}=p^n$,}\\
0&\mbox{otherwise.}
\end{array}\right.
\end{equation}
\end{theorem}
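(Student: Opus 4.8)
The plan is to reduce the problem in two stages — first from $X$ to the intermediate quotient $Y = X/I$ via the wild cyclic $p$-subgroup $I$, then from $Y$ to $Z = X/G$ via the tame group $\overline G = G/I$ — and to track the $kG$-module structure through each stage. Since $k$ is algebraically closed and $G = P \rtimes_\chi C$ is $p$-hypo-elementary, every indecomposable $kG$-module is the uniserial $U_{a,b}$ from Notation \ref{not:main3ind}, so it suffices to count, for each socle type $a$ and each dimension $b$, the multiplicity $n_{a,b}$; and because the composition factors of a $kG$-module are determined, one can instead work with the Brauer character and the restriction to $I$. The key structural input is that, since $I$ is cyclic of order $p^{n_I}$ with $k$ algebraically closed, the $kI$-module $\HH^0(X,\mathcal O_X(E))$ decomposes as a sum of Jordan blocks $V_b$ of sizes $b \in \{1,\dots,p^{n_I}\}$, and the number of Jordan blocks of size $> j$ is computed by an Euler-characteristic/Riemann-Roch formula on the quotient $Y$: namely $\dim_k \HH^0(X,\mathcal O_X(E))^{(\sigma-1)^{?}}$-type invariants are expressed as $\dim_k \HH^0(Y,\mathcal O_Y(E_j))$ for the pushed-down divisor $E_j$ defined by the floor expression \eqref{eq:Ej}. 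That floor formula is exactly the divisor obtained by the standard local computation of how a wildly ramified cyclic $p$-cover pushes forward a line bundle (the $a_{\ell,t}$ and $i_\ell(x)$ bookkeeping records which "layer" of the higher ramification filtration one is in, matching the $t$-th Jordan piece above a point $x$ with $n_x$ jumps).

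I would carry out the steps as follows. First, establish the local analysis at each ramified point $x$: choose a uniformizer, use the known action of $I_x$ on the completed local ring together with the jumps $i_1(x) < \dots < i_{n_x}(x)$, and show that the local contribution to $\HH^0(X,\mathcal O_X(E))$ as an $I_x$-module is a direct sum of cyclic pieces whose "$(\sigma-1)$-filtration levels" are controlled precisely by the $p$-adic digits of an index $t$; this is the source of \eqref{eq:Ej} and is essentially the computation underlying \cite[Thm. 1.1]{BleherChinburgKontogeorgis2020} for $m=1$, now run for a general $G$-invariant $E$. Second, assemble these local data globally: show that $\HH^0(X,\mathcal O_X(E))$, as a $kP$-module, has its Jordan-type determined by the sequence of dimensions $\dim_k \HH^0(Y,\mathcal O_Y(E_j))$, and compute the latter by Riemann–Roch on $Y$ (or rather, after the tame descent, on $Z$), which — after collecting the tame ramification corrections involving $c_y$, $\varphi(y)$, and the $\mu_{a,i}(y)$ — yields the formula \eqref{eq:nj} for $n_j$ and the socle-refined count \eqref{eq:naj} for $n(a,j)$. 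Third, handle the tame layer: since $\gcd(c,p)=1$, the $k\overline G_y$-module structure of the stalk is semisimple and governed by the fundamental character $\theta_y$, so pushing down from $Y$ to $Z$ introduces exactly the eigenspace-counting sums $\sum_{d=1}^{\ell}\mu_{a,-d}(y)$ and the fractional-part correction $\sum_{d=1}^{c_y-1}\frac{d}{c_y}\mu_{a,d}(y)$; this is a direct generalization of the Chevalley–Weil computation and of \cite[\S3]{BleherChinburgKontogeorgis2020}. Fourth, prove non-negativity and integrality of $n(a,j)$: integrality follows because $n(a,j)$ counts a multiplicity of a specific composition factor inside a genuine $kG$-module (so it must be a non-negative integer), but one should also give the direct argument that the Riemann–Roch expression is an integer, using $\ell_{y(z),j} \equiv e_{y(z),j} \pmod{c_{y(z)}}$ from \eqref{eq:ellzj} to see the sum telescopes correctly. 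Finally, translate the Jordan-block/socle data into the multiplicities $n_{a,b}$: a block of $k$-dimension $b = (j+1)p^{n-n_I}$ over $G$ corresponds to the Jordan size $j+1$ over $I$ scaled up by the "free part" $p^{n-n_I}$ coming from the $P/I$ quotient acting freely on the complement of the ramification locus, which is why only the dimensions $b \in \{p^{n-n_I}, 2p^{n-n_I}, \dots, p^n\}$ occur and why the multiplicity is the consecutive difference $n(a,j) - n(a,j+1)$ (with the top one $n(a,p^{n_I}-1)$), exactly \eqref{eq:nab}.

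The main obstacle I expect is the bookkeeping in the wild local computation: correctly matching the $t$-th of the $p^{n_x}$ (or $p^{n_I}$, after the scaling by $p^{n_I - n_x}$ that appears in the definition of $t$ via $p^{n_I - n_{x(y)}} t \le j < p^{n_I - n_{x(y)}}(t+1)$) local cyclic summands above $x$ with the correct shift of the divisor $E$ by $\sum_\ell a_{\ell,t}\, p^{n_x - \ell}\, i_\ell(x)$, and checking that the floor in \eqref{eq:Ej} behaves compatibly under the two-step quotient — in particular that $E_j$ is well-defined independently of the choice of $x(y)$ above $y$ (this uses $G$-invariance of $E$ and the fact that points in one $I$-orbit share the same ramification data) and independently of the choice of $y(z)$ above $z$. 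A secondary technical point is verifying that the $kP$-module (not just $kI$-module) structure is genuinely recovered from the $\dim_k \HH^0(Y,\mathcal O_Y(E_j))$ data — i.e. that there is no ambiguity from the larger group $P \supsetneq I$ — which is where one invokes that $I$ is by construction the \emph{largest} $p$-inertia group, so $P/I$ acts with trivial stabilizers on the relevant locus and the Jordan structure over $P$ is just the $I$-structure inflated/induced in the evident way; making this precise, and compatible with the socle-type refinement, is the delicate part that ties the whole argument together.
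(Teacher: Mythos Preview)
Your outline matches the paper's approach closely: filter $\pi_*\mathcal{O}_X(E)$ by powers of the radical of $kI$, identify the graded pieces as twisted line bundles $S_{\chi^{-j}}\otimes_k\mathcal{O}_Y(E_j)$ on $Y$, and then analyze each $\HH^0(Y,\mathcal{O}_Y(E_j))$ as a $k\overline{G}$-module via the tame cover $\lambda:Y\to Z$. The local bookkeeping that worries you in your final paragraph is indeed handled exactly as in \cite[Prop.~4.1]{BleherChinburgKontogeorgis2020}, with $\mathcal{O}_X(E)$ in place of the inverse different.

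There is, however, a genuine gap at the hinge between your two stages: you never establish, or even flag, that $\deg(E_j)>2g(Y)-2$ and hence $\HH^1(Y,\mathcal{O}_Y(E_j))=0$ for every $0\le j\le p^{n_I}-1$. This vanishing is not automatic; the paper proves it (Proposition~\ref{prop:filtersheaf}(ii)) by a Riemann--Hurwitz estimate from the hypothesis $\deg(E)>2g(X)-2$, and it is precisely the point where that hypothesis enters. Without it two of your steps collapse. First, you cannot identify $\HH^0(X,\mathcal{O}_X(E))^{(j+1)}/\HH^0(X,\mathcal{O}_X(E))^{(j)}$ with $\HH^0(Y,\mathcal{L}_j)$: that identification (Lemma~\ref{lem:dimensionargument}) is a dimension argument requiring the long exact cohomology sequence of $0\to\pi_*\mathcal{O}_X(E)^{(j)}\to\pi_*\mathcal{O}_X(E)^{(j+1)}\to\mathcal{L}_j\to 0$ to truncate, and your phrase ``compute by Riemann--Roch'' glosses over the fact that Riemann--Roch gives $h^0-h^1$, not $h^0$. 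Second, it is the same vanishing that, via \cite[Thm.~2]{Nakajima1986} for the tame cover $\lambda$, makes $\HH^0(Y,\mathcal{O}_Y(E_j))$ a \emph{projective} $k\overline{G}$-module. Projectivity of the graded pieces is what forces any indecomposable $kG$-summand $U_{a,b}$ to have $p^{n-n_I}\mid b$: each $U_{a,b}^{(j+1)}/U_{a,b}^{(j)}$ is then a direct summand of a projective $k\overline{G}$-module, hence itself projective, hence of $k$-dimension divisible by $p^{n-n_I}$. Your heuristic via ``$P/I$ acts with trivial stabilizers'' points in the right direction but does not by itself deliver this divisibility; the clean route is through projectivity of the filtered quotients, and that needs the $\HH^1$ vanishing.
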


\begin{remark}
\label{rem:computation}
If $z\in Z-Z_\mathrm{br}$ then  $\ell_{y(z),j}=0$ for $0\le j\le p^{n_I}-1$.
Moreover, for $0\le j\le p^{n_I}-2$, we have
$$n(a,j)-n(a,j+1)=\sum_{z\in Z_{\mathrm{br}}} \left(\sum_{d=1+\mathrm{min}\{\ell_{y(z),j},\ell_{y(z),j+1}\}}^{\mathrm{max}\{\ell_{y(z),j},\ell_{y(z),j+1}\}}  \epsilon_{z,j} \,\mu_{a,-d}(y(z))\right) + n_j-n_{j+1}$$
where $\epsilon_{z,j}=1$ if $\ell_{y(z),j}\ge \ell_{y(z),j+1}$ and $\epsilon_{z,j}=-1$ otherwise.
\end{remark}

As a consequence of Theorem \ref{thm:main3}, we obtain the following result concerning holomorphic poly-differentials.

\begin{corollary}
\label{thm:main3cor}
Under the assumptions of Theorem $\ref{thm:main3}$, suppose $m>1$, $g(X)\ge 2$ and $E=mK_X$, where $K_X$ is as in Notation $\ref{not:main3}(e)$, so that $\Omega_X^{\otimes m}\cong \mathcal{O}_X(mK_X)$ as $\mathcal{O}_X$-$G$-modules. Then, for $0 \le j \le p^{n_I} - 1$, the divisor $E_j$ given by Equation $(\ref{eq:Ej})$
satisfies $E_j = mK_Y+D_j$, where $D_j=\sum_{y\in Y} d_{y,j}\,y$ is defined by
\begin{equation}
\label{eq:Dj}
d_{y,j} = \left\lfloor \frac{m\, \sum_{i\ge 0} \left(\# I_{x(y),i} - 1\right) - \sum_{\ell = 1}^{n_{x(y)}} a_{\ell,t} \,p^{n_{x(y)}-\ell}\,i_\ell(x(y))}{p^{n_{x(y)}}}
\right\rfloor.
\end{equation}
In particular, $d_{y,j}=0$ when $y$ is not a branch point of $\pi$. Using these $E_j$, Theorem $\ref{thm:main3}$ determines the precise $kG$-module structure of $\HH^0(X,\Omega_X^{\otimes m})$.
\end{corollary}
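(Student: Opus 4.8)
The plan is to deduce Corollary \ref{thm:main3cor} from Theorem \ref{thm:main3} by specializing the general Weil divisor $E$ to the canonical multiple $E = mK_X$ and making the two formulas \eqref{eq:Ej} and \eqref{eq:Dj} match. Since $\Omega_X^{\otimes m}\cong \mathcal{O}_X(mK_X)$ as $\mathcal{O}_X$-$G$-modules (the $G$-invariance of $K_X$ is recorded in Notation \ref{not:main3}(e)), Theorem \ref{thm:main3} applies verbatim once we know that $mK_X$ has degree $>2g(X)-2$; but $\mathrm{deg}(mK_X)=m(2g(X)-2)>2g(X)-2$ exactly when $m>1$ and $g(X)\ge 2$, which is assumed. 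So the only real content is the explicit identification of the divisor $E_j$ on $Y$.

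First I would unwind the coefficients $e_x$ of $E=mK_X$. By Notation \ref{not:main3}(e), $K_X=\pi^*K_Y+\mathrm{Ram}_\pi$ with $\mathrm{Ram}_\pi=\sum_{x\in X}\sum_{i\ge 0}(\#I_{x,i}-1)\,x$, so for a closed point $x=x(y)$ above $y$ we have $e_{x(y)}=m\cdot\mathrm{ord}_{x(y)}(\pi^*K_Y)+m\sum_{i\ge 0}(\#I_{x(y),i}-1)$. Since $\pi$ is ramified at $x(y)$ with ramification index $\#I_{x(y)}=p^{n_{x(y)}}$, one has $\mathrm{ord}_{x(y)}(\pi^*K_Y)=p^{n_{x(y)}}\cdot\mathrm{ord}_y(K_Y)$. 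Substituting into \eqref{eq:Ej}, the term $m\,p^{n_{x(y)}}\mathrm{ord}_y(K_Y)$ is divisible by $p^{n_{x(y)}}$ and so passes through the floor function additively, yielding
\begin{equation}
\label{eq:cor-split}
e_{y,j}=m\cdot\mathrm{ord}_y(K_Y)+\left\lfloor\frac{m\sum_{i\ge 0}(\#I_{x(y),i}-1)-\sum_{\ell=1}^{n_{x(y)}}a_{\ell,t}\,p^{n_{x(y)}-\ell}\,i_\ell(x(y))}{p^{n_{x(y)}}}\right\rfloor,
\end{equation}
which is precisely $m\cdot\mathrm{ord}_y(K_Y)+d_{y,j}$ with $d_{y,j}$ as in \eqref{eq:Dj}. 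Summing over $y\in Y$ gives $E_j=mK_Y+D_j$ as claimed. I would then note that when $y$ is unramified in $\pi$ we have $n_{x(y)}=0$, the sum $\sum_{i\ge 0}(\#I_{x(y),i}-1)$ vanishes, the empty inner sum is $0$, and the denominator is $p^0=1$, so $d_{y,j}=\lfloor 0\rfloor=0$; hence $D_j$ is supported on the branch locus of $\pi$, as stated. Here one must be slightly careful about the definition of $t$ and the $p$-adic digits $a_{\ell,t}$ at such a point — with $n_{x(y)}=0$ the $p$-adic expansion has no digits and the convention forces $t=0$ — so the unramified case is consistent; I would spell this out in a sentence.

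The final assertion, that Theorem \ref{thm:main3} with these $E_j$ determines the $kG$-module structure of $\HH^0(X,\Omega_X^{\otimes m})$, then requires essentially nothing beyond invoking Theorem \ref{thm:main3} for $E=mK_X$: the quantities $\ell_{y(z),j}$, $n_j$, $n(a,j)$ and the multiplicities $n_{a,b}$ in \eqref{eq:nab} are all defined in terms of the $e_{y,j}$, which we have just rewritten intrinsically via $K_Y$ and the ramification data of $\pi$, and the isomorphism $\HH^0(X,\Omega_X^{\otimes m})\cong\HH^0(X,\mathcal{O}_X(mK_X))$ is $G$-equivariant. I expect the main (and only) obstacle to be the bookkeeping around the floor function: one must justify that pulling the $p^{n_{x(y)}}$-divisible summand $m\,p^{n_{x(y)}}\mathrm{ord}_y(K_Y)$ out of the numerator in \eqref{eq:Ej} is legitimate, i.e. that $\lfloor (A+p^{n}B)/p^{n}\rfloor=B+\lfloor A/p^{n}\rfloor$ for integers $A,B$, and that $\mathrm{ord}_{x(y)}(\pi^*K_Y)=p^{n_{x(y)}}\mathrm{ord}_y(K_Y)$ — the latter being the statement that pulling back a divisor along $\pi$ multiplies orders at totally ramified points by the ramification index, which is standard but should be cited (e.g. \cite[\S IV.1]{SerreCorpsLocaux1968} or \cite[\S II.6]{Hartshorne1977}). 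Everything else is a direct substitution.
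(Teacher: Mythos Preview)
Your proposal is correct and follows essentially the same approach as the paper: the paper's proof is a one-sentence remark that since $p^{n_{x(y)}}=\#I_{x(y)}$ is the ramification index of $x(y)$ over $y$, the divisor $E_j$ from \eqref{eq:Ej} for $E=mK_X$ equals $mK_Y+D_j$ with $D_j$ as in \eqref{eq:Dj}, whence the corollary follows from Theorem~\ref{thm:main3}. You have simply written out in full the floor-function and pullback bookkeeping that the paper leaves implicit.
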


In Remark \ref{rem:refereerequest!}, we will use arguments from the proof of Theorem \ref{thm:main3} to give a more explicit version of the algorithm provided in \cite[Remark 4.4]{BleherChinburgKontogeorgis2020} concerning $\HH^0(X,\Omega_X)$.

The two main differences in the proofs of \cite[Thm. 1.1 and Remark 4.4]{BleherChinburgKontogeorgis2020} and of Theorems \ref{thm:newmain} and  \ref{thm:main3}, including Corollaries \ref{thm:oldmain} and \ref{thm:main3cor}, are as follows, when assuming $k$ and $G$ are as in Assumption \ref{ass:main3}.

\begin{enumerate}
\item[(A)] Suppose $I$ is not trivial, i.e. the cover $\pi:X\to Y$ is wildly ramified. When considering the sheaf $\Omega_X\cong\mathcal{O}_X(K_X)$ then $\HH^1(Y,\Omega_Y\otimes_{\mathcal{O}_Y}\mathcal{O}_Y(D_j))$ does not vanish when $j=\#I-1$, and neither does $\HH^1(X,\Omega_X)$. On the other hand, when considering the sheaf $\mathcal{O}_X(E)$ then $\HH^1(Y,\mathcal{O}_Y(E_j))$ vanishes for all $0\le j\le \#I-1$. Since $\HH^1(X,\mathcal{O}_X(E))=0$, this is crucial for a dimension argument needed to compare the filtrations of  $\HH^0(X,\mathcal{O}_X(E))$ and of $\mathcal{O}_X(E)$, given by the actions of the powers of the radical of $kI$ (see Notation \ref{not:another}).

\item[(B)] When considering the sheaf $\Omega_X\cong\mathcal{O}_X(K_X)$ then one can use Serre duality to 
avoid having to refer to the class of the canonical divisor $K_Z$ modulo principal divisors on $Z$. On the other hand, when $E=mK_X$ for $m>1$ then the class of $K_Z$ is essential when analyzing the tamely ramified cover $\lambda:Y\to Z$. For general $E$ as in Assumption \ref{ass:main3}, the 
$G$-divisor class of $E$ is used to analyze $\lambda$.
\end{enumerate}

The paper is organized as follows. 
In \S \ref{s:mainthm}, we prove Theorems \ref{thm:newmain} and \ref{thm:main3}, and deduce Corollary \ref{thm:main3cor}. Moreover, in Remark \ref{rem:refereerequest!}, we give a new version of the algorithm for $\HH^0(X,\Omega_X)$ from \cite[Remark 4.4]{BleherChinburgKontogeorgis2020}.
In \S\ref{s:deformation}, we show in Proposition \ref{prop:deformation} how Corollaries \ref{thm:oldmain} and \ref{thm:main3cor} can be used to find the $k$-dimension of the tangent space of the global deformation functor associated to $(X,G)$. 
In \S \ref{s:hyperelliptic}, we illustrate Corollaries \ref{thm:oldmain} and \ref{thm:main3cor} and Proposition \ref{prop:deformation} by considering a particular family of hyperelliptic curves. 
In \S \ref{s:modular}, we discuss non-trivial congruences modulo appropriate maximal ideals between prime level $\ell$ cusp forms of even weight $2m>2$; see Theorems \ref{thm:firstmodtheorem} and \ref{thm:secondmodtheorem}. If $p=3$, we use Corollaries \ref{thm:oldmain} and \ref{thm:main3cor} to fully determine the $k\mathrm{PSL}(2,\mathbb{F}_\ell)$-module structure of  $\HH^0(X_3(\ell),\Omega_{X_3(\ell)}^{\otimes m})$; see Theorem \ref{thm:modularresult} and Propositions \ref{prop:full_mixed} and \ref{prop:full_equal}, which we prove in  \S \ref{s:modular3}.

Part of this paper is the Ph.D. thesis of the second author under the supervision of the first (see \cite{WoodThesis2020}).
We would like to thank the referee for valuable comments that helped improve the paper.


\section{Proof of Theorems \ref{thm:newmain} and \ref{thm:main3}, and Corollary \ref{thm:main3cor}}
\label{s:mainthm}

By using the Conlon induction theorem \cite[Thm. (80.51)]{CRII} and the representation theory of $p$-hypo-elementary groups \cite[\S II.5-II.6]{Alperin1986}, we can reduce the proof of Theorem \ref{thm:newmain}  to the case when $G$ is $p$-hypo-elementary and $k$ is algebraically closed (see  \cite[Lemma 3.2 and Prop. 3.5]{BleherChinburgKontogeorgis2020} for details). Since $\HH^0(X,\mathcal{O}_X(E))$ and $\HH^0(X,\mathcal{O}_X(E'))$ are isomorphic $kG$-modules when $E$ and $E'$ lie in the same $G$-divisor class, Theorem \ref{thm:newmain} then follows from Theorem \ref{thm:main3}. As we have seen in the introduction, Corollary \ref{thm:oldmain} is a direct consequence of Theorem \ref{thm:newmain}.

For the remainder of this section, we suppose Assumption \ref{ass:main3} holds, and we assume Notations \ref{not:main3ind} and \ref{not:main3}. In particular, 
$$E=\sum_{x\in X}e_x x$$ 
is a $G$-invariant Weil divisor on $X$ with $\mathrm{deg}(E) > 2 g(X)-2$. The sheaf $\mathcal{O}_X(E)$ is a coherent $\mathcal{O}_X$-$G$-module that is a locally free $\mathcal{O}_X$-module of rank one, and $\HH^1(X,\mathcal{O}_X(E))=0$. 

If $x(y)\in X$ lies above $y\in Y$, then $p^{n_{x(y)}}=\#I_{x(y)}$ is the ramification index of $x(y)$ over $y$. 
Under the assumptions of Corollary \ref{thm:main3cor}, this immediately implies that if $E=mK_X$ then the divisor $E_j$ from Equation (\ref{eq:Ej}) equals $mK_Y + D_j$ with $D_j$ as in Equation (\ref{eq:Dj}). Hence Corollary \ref{thm:main3cor} follows from Theorem \ref{thm:main3}.

It remains to prove Theorem \ref{thm:main3}. We use the geometric method that was introduced in \cite[\S 4]{BleherChinburgKontogeorgis2020} when considering the holomorphic differentials $\HH^0(X,\Omega_X)$. 
The first step is to consider the cover $\pi:X\to Y$, which is wildly ramified if $\#I>1$, and to study a particular filtration of the $G$-sheaf $\pi_*\mathcal{O}_X(E)$.
We need the following notation, which was introduced in \cite[\S2]{BleherChinburgKontogeorgis2020}.

\begin{nota}
\label{not:another}
Suppose $\mathcal{F}$ is a coherent $\mathcal{O}_X$-$G$-module that is a locally free $\mathcal{O}_X$-module of finite rank, and let $\mathcal{J} = kI(\tau-1)$ be the Jacobson radical of the group ring $kI$. For $0 \le j \le \#I$, we denote by $\HH^0(X,\mathcal{F})^{(j)}$ the kernel of the action of $\mathcal{J}^j = kI(\tau-1)^j$ on $\HH^0(X,\mathcal{F})$, and we denote by $\pi_*\mathcal{F}^{(j)}$ the kernel of the action of $\mathcal{J}^j$ on $\pi_*\mathcal{F}$. 
\end{nota}

Since $I$ is a normal subgroup of $G$, $\mathcal{J}^j$ is taken to itself by the conjugation action of $G$ on $I$. It follows that $\HH^0(X,\mathcal{O}_X(E))^{(j)}$ is a $kG$-module and that $\pi_*\mathcal{O}_X(E)^{(j)}$ is a quasi-coherent $\mathcal{O}_Y$-$G$-module. Moreover, since $\pi_*\mathcal{O}_X(E)^{(j)}$ is a subsheaf of a locally free coherent $\mathcal{O}_Y$-module of finite rank, we obtain that $\pi_*\mathcal{O}_X(E)^{(j)}$ is a locally free coherent $\mathcal{O}_Y$-$G$-module. 

We now analyze the filtration $\{\pi_*\mathcal{O}_X(E)^{(j)}\}_{j=0}^{\#I}$ of the $G$-sheaf $\pi_*\mathcal{O}_X(E)$.

\begin{proposition}
\label{prop:filtersheaf}
Suppose Assumption $\ref{ass:main3}$ holds and assume Notations $\ref{not:main3ind}$, $\ref{not:main3}$ and $\ref{not:another}$. Fix $0 \le j \le p^{n_I} - 1$. The action of $\mathcal{O}_Y$ and of $G$ on $\pi_*\mathcal{O}_X(E)$ makes the quotient sheaf  $\mathcal{L}_j := \pi_*\mathcal{O}_X(E)^{(j+1)}/\pi_*\mathcal{O}_X(E)^{(j)}$ into a locally free coherent $\mathcal{O}_Y$-$\overline{G}$-module.  Let $E_j$ be the $G$-invariant Weil divisor on $Y$ given by Equation $(\ref{eq:Ej})$ in Theorem $\ref{thm:main3}$.
\begin{itemize}
\item[(i)] There is an isomorphism of locally free coherent $\mathcal{O}_Y$-$\overline{G}$-modules between  $\mathcal{L}_j$ and $S_{\chi^{-j}} \otimes_k \mathcal{O}_Y(E_j)$.
\item[(ii)] If $n_I\ge 1$, i.e. $\#I>1$, then $\mathrm{deg}(\mathcal{L}_j)=\mathrm{deg}(E_j)> 2g(Y)-2$, and $\HH^1(Y,\mathcal{L}_j)=\HH^1(Y,\mathcal{O}_Y(E_j))=0$.
\end{itemize}
\end{proposition}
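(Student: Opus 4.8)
The plan is to work locally on $Y$ and make everything explicit in terms of a uniformizer, since both statements are assertions about a coherent sheaf that is built by an invariant-subspace construction out of $\pi_*\mathcal{O}_X(E)$. First I would reduce to a local computation: fix $y \in Y$ and a point $x = x(y) \in X$ above it, let $p^{n_x} = \#I_x$ be the ramification index, and choose compatible uniformizers. Since $I_x$ is cyclic of $p$-power order acting on the complete local ring $\widehat{\mathcal{O}}_{X,x}$, the extension $\widehat{\mathcal{O}}_{X,x}/\widehat{\mathcal{O}}_{Y,y}$ is totally ramified of degree $p^{n_x}$, and a generator of $I_x$ acts on a uniformizer $\pi_x$ by $\pi_x \mapsto \pi_x \cdot u$ with $u$ a unit whose reduction is a root of unity; the jumps $i_1(x) < \cdots < i_{n_x}(x)$ record exactly how $(\tau-1)$ acts on the powers of $\pi_x$. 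The key point is to identify, inside the stalk $(\pi_*\mathcal{O}_X(E))_y = \{f \in k(X) : \mathrm{div}(f) \ge -E \text{ near } x\}$, the kernel of $(\tau-1)^{j+1}$ modulo the kernel of $(\tau-1)^j$, and to compute its $\mathcal{O}_{Y,y}$-module structure together with the residual $\overline{G}$-action. This is a by-now standard analysis of the ramification filtration of a wildly ramified $\mathbb{Z}/p^n$-cover, and the floor in Equation (\ref{eq:Ej}) arises precisely as the valuation at $y$ of the largest element of the relevant graded piece: writing $j$ in the form governed by the $p$-adic digits $a_{\ell,t}$, the element $\pi_x^{-(\text{something})}$ that generates $\mathcal{L}_j$ locally has valuation $\lfloor (e_x - \sum a_{\ell,t} p^{n_x - \ell} i_\ell(x))/p^{n_x}\rfloor$ at $y$.

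Second, I would assemble these local computations into a global statement. Each $\pi_*\mathcal{O}_X(E)^{(j)}$ is a locally free coherent $\mathcal{O}_Y$-$G$-module (as already noted in the text, being a subsheaf of finite rank of a locally free sheaf), so the successive quotient $\mathcal{L}_j$ is coherent; local freeness of $\mathcal{L}_j$ follows because at each $y$ the graded piece is a free rank-one $\mathcal{O}_{Y,y}$-module by the local computation, and at unramified points $I_x = 1$ forces the filtration to be trivial except for $j=0$. To pin down the $\overline{G}$-action, note that $(\tau-1)^j$ on $(\pi_*\mathcal{O}_X(E))$ transforms under conjugation by the lift of $\rho$ via the character $\chi$, so the $j$-th graded piece is twisted by $\chi^{-j}$; matching up the local generators shows $\mathcal{L}_j \cong S_{\chi^{-j}} \otimes_k \mathcal{O}_Y(E_j)$ as $\mathcal{O}_Y$-$\overline{G}$-modules. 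This establishes (i).

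For (ii), once (i) is known, $\mathrm{deg}(\mathcal{L}_j) = \mathrm{deg}(\mathcal{O}_Y(E_j)) = \mathrm{deg}(E_j)$ is immediate since tensoring with the one-dimensional $S_{\chi^{-j}}$ does not change the degree. To see $\mathrm{deg}(E_j) > 2g(Y)-2$ I would argue as follows: summing the ranks over $j$ recovers $\mathrm{rank}_{\mathcal{O}_Y}(\pi_*\mathcal{O}_X(E)) = \#I = p^{n_I}$ and summing degrees recovers $\deg(\pi_*\mathcal{O}_X(E))$, which by the projection formula / Riemann-Roch for the finite flat cover $\pi$ equals $\deg(E) - (\text{ramification contribution})$ distributed among the $p^{n_I}$ pieces; the hypothesis $\deg(E) > 2g(X)-2$ translates, piece by piece, into $\deg(E_j) > 2g(Y)-2$ once one uses the Riemann–Hurwitz relation $2g(X)-2 = p^{n_I}(2g(Y)-2) + \deg(\mathrm{Ram}_\pi)$ and the explicit formula for $E_j$. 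Then $\HH^1(Y,\mathcal{O}_Y(E_j)) = 0$ by Riemann–Roch on the curve $Y$, and $\HH^1(Y,\mathcal{L}_j) = 0$ because $\mathcal{L}_j$ and $\mathcal{O}_Y(E_j)$ differ only by the one-dimensional twist $S_{\chi^{-j}}$, which does not affect cohomology dimensions.

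The main obstacle I anticipate is the bookkeeping in the local computation at a wildly ramified point: correctly tracking how the ramification jumps $i_\ell(x)$ and the $p$-adic digits $a_{\ell,t}$ of the index $t$ combine to produce the floor expression in (\ref{eq:Ej}), and verifying the compatibility of the floor divisions with the passage between $j$ and $t$ (the relation $p^{n_I - n_x} t \le j < p^{n_I - n_x}(t+1)$). A secondary subtlety is checking that the graded piece really is \emph{locally free} of rank one rather than merely torsion-free — this needs the fact that $\HH^1(X,\mathcal{O}_X(E))=0$ together with the exactness properties of $\pi_*$, which is exactly the dimension input flagged in item (A) of the introduction.
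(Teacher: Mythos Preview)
Your approach to part (i) is essentially the same as the paper's: a local computation at each $y\in Y$ tracking how powers of $(\tau-1)$ act on the stalk, producing the floor expression in Equation~(\ref{eq:Ej}), then globalizing and identifying the $\chi^{-j}$-twist from the conjugation action. The paper simply cites \cite[Prop.~4.1]{BleherChinburgKontogeorgis2020} for this, replacing $\mathcal{D}_{X/Y}^{-1}$ by $\mathcal{O}_X(E)$.

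There are, however, two misconceptions worth correcting. First, your claim that at unramified points ``$I_x=1$ forces the filtration to be trivial except for $j=0$'' is wrong: when $y$ is not a branch point of $\pi$, there are $p^{n_I}$ points of $X$ over $y$ permuted freely by $I$, so $(\pi_*\mathcal{O}_X(E))_y$ is a free $\mathcal{O}_{Y,y}[I]$-module of rank one, and the $(\tau-1)^j$-filtration has a rank-one graded piece at \emph{every} $0\le j\le p^{n_I}-1$, not just $j=0$. Second, local freeness of $\mathcal{L}_j$ does not require the vanishing of $\HH^1(X,\mathcal{O}_X(E))$: on a smooth curve any torsion-free coherent sheaf is locally free, so $\mathcal{L}_j$ is locally free simply because it injects into $\pi_*\mathcal{O}_X(E)/\pi_*\mathcal{O}_X(E)^{(j)}$. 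The $\HH^1$-vanishing is used later, in Lemma~\ref{lem:dimensionargument}, to compare the sheaf filtration with the filtration on global sections.

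For part (ii) your argument has a genuine gap. Summing degrees over $j$ and invoking Riemann--Hurwitz gives only an \emph{average} statement $\sum_j \deg(E_j) > p^{n_I}(2g(Y)-2)$; it does not by itself yield $\deg(E_j)>2g(Y)-2$ for each individual $j$, and ``piece by piece'' is not a proof. The paper instead argues directly for each fixed $j$: it writes $2g(Y)-2$ via Riemann--Hurwitz in terms of the ramification jumps $i_\ell(x)$, uses $\deg(E)>2g(X)-2$ to get a strict inequality, then applies the elementary bound $a_{\ell,t}\le p-1$ together with the floor identity $\lfloor r - (p^{n_x}-1)/p^{n_x}\rfloor \le \lfloor r\rfloor$ to show that the resulting sum is bounded above by $\sum_y e_{y,j}=\deg(E_j)$. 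The key step you are missing is precisely this use of $a_{\ell,t}\le p-1$ to make the inequality work termwise rather than only on average.
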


\begin{proof}
Let $\mathcal{D}_{X/Y}^{-1}$ be the inverse different of $X$ over $Y$. Then $\mathcal{D}_{X/Y}^{-1}$ is a coherent $\mathcal{O}_X$-$G$-module that is a subsheaf of the constant sheaf $k(X)$. Moreover, 
$\mathcal{D}_{X/Y}^{-1}$ can be identified with the line bundle $\mathcal{O}_X(\mathrm{Ram}_\pi)$.

Fix $0\le j\le p^{n_I}-1$. Using the same arguments as in the proof of \cite[Prop. 4.1]{BleherChinburgKontogeorgis2020}, where we replace $\mathcal{D}^{-1}_{X/Y}$ by $\mathcal{O}_X(E)$, we obtain part (i).

To prove part (ii), we show that $\mathrm{deg}(E_j) > 2 g(Y)-2$ by using our assumption that $2g(X)-2<\mathrm{deg}(E)=\sum_{x\in X} e_x$. By the Riemann-Hurwitz formula (see also \cite[Eq. (4.15)]{BleherChinburgKontogeorgis2020}) and since $E$ is $G$-invariant and $a_{\ell,t}\le p-1$ for all $\ell,t$, we obtain 
\begin{eqnarray*}
2 g(Y)-2 
&=& \frac{1}{p^{n_I}}\left(2g(X)-2-\sum_{x\in X}\sum_{\ell=1}^{n_x}(p-1)p^{n_x-\ell}(i_\ell(x)+1)\right)\\
&<& \sum_{x\in X} \frac{e_x-\sum_{\ell=1}^{n_x}(p-1)p^{n_x-\ell}(i_\ell(x)+1)}{p^{n_I}}\\
&=& \sum_{y\in Y} \left(\frac{e_{x(y)}-\sum_{\ell=1}^{n_{x(y)}}(p-1)p^{n_{x(y)}-\ell}\,i_\ell(x(y))}{p^{n_{x(y)}}} - \frac{\sum_{\ell=1}^{n_{x(y)}}(p-1)p^{n_{x(y)}-\ell}}{p^{n_{x(y)}}} \right)\\
&\le& \sum_{y\in Y} \left( \frac{e_{x(y)}-\sum_{\ell=1}^{n_{x(y)}}a_{\ell,t}\,p^{n_{x(y)}-\ell}\,i_\ell(x(y))}{p^{n_{x(y)}}}-\frac{p^{n_{x(y)}}-1}{p^{n_{x(y)}}}\right)\\\
&\le& \sum_{y\in Y} e_{y,j} \;=\;\mathrm{deg}(E_j)
\end{eqnarray*}
where $x(y)$ stands for one particular point on $X$ above $y\in Y$, and we use that $(p-1)\sum_{\ell=1}^{n_{x(y)}}p^{n_{x(y)}-\ell}=p^{n_{x(y)}}-1$.
\end{proof}

The next step is to compare the filtration $\{\pi_*\mathcal{O}_X(E)^{(j)}\}_{j=0}^{\#I}$ of the $G$-sheaf $\pi_*\mathcal{O}_X(E)$ to the filtration $\{\HH^0(X,\mathcal{O}_X(E))^{(j)}\}_{j=0}^{\#I}$ of the $kG$-module $\HH^0(X,\mathcal{O}_X(E))$. 

We note that $\HH^1(Y,-)$ applied to the top filtered piece of $\pi_*\Omega_X$ does not vanish, and neither does $\HH^1(X, \Omega_X)$ (see \cite[Lemma 4.2]{BleherChinburgKontogeorgis2020}). On the other hand, if we replace $\Omega_X$ by $\mathcal{O}_X(E)$ as in Assumption \ref{ass:main3}, then part (ii) of Proposition \ref{prop:filtersheaf} shows that $\HH^1(Y,\mathcal{O}_Y(E_j))=0$ for all $0\le j\le p^{n_I}-1$. Additionally, our assumption that $\mathrm{deg}(E) > 2 g(X) -2$ implies that $\HH^1(X, \mathcal{O}_X(E))=0$. 

These two differences 
balance each other out so we are able to adapt the dimension argument in the proof of \cite[Lemma 4.2]{BleherChinburgKontogeorgis2020} 
to make it work
under Assumption \ref{ass:main3}. More precisely, replacing $\Omega_X$ by $\mathcal{O}_X(E)$ and using Proposition \ref{prop:filtersheaf} instead of \cite[Prop. 4.1]{BleherChinburgKontogeorgis2020}, the proof of the following result proceeds using the same main steps as in the proof of \cite[Lemma 4.2]{BleherChinburgKontogeorgis2020}.

\begin{lemma}
\label{lem:dimensionargument}
Suppose Assumption $\ref{ass:main3}$ holds and assume Notations $\ref{not:main3ind}$, $\ref{not:main3}$ and $\ref{not:another}$. For all $0 \le j \le p^{n_I} - 1$, there are isomorphisms
$$\HH^0(X,\mathcal{O}_X(E))^{(j+1)}/\HH^0(X,\mathcal{O}_X(E))^{(j)}\cong \HH^0(Y,\pi_*\mathcal{O}_X(E)^{(j+1)}/\pi_*\mathcal{O}_X(E)^{(j)}) \cong  S_{\chi^{-j}} \otimes_k \HH^0(Y, \mathcal{O}_Y(E_j))$$
of $k\overline{G}$-modules, where $E_j$ is the divisor from Theorem $\ref{thm:main3}$ defined by Equation $(\ref{eq:Ej})$.
\end{lemma}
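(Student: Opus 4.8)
The plan is to follow the dimension-counting argument from \cite[Lemma 4.2]{BleherChinburgKontogeorgis2020}, taking advantage of the two cohomology vanishing facts available under Assumption \ref{ass:main3}. First I would set up the basic comparison map. For each $0\le j\le p^{n_I}-1$, taking global sections of the inclusion $\pi_*\mathcal{O}_X(E)^{(j)}\hookrightarrow \pi_*\mathcal{O}_X(E)^{(j+1)}$ of locally free coherent $\mathcal{O}_Y$-$\overline{G}$-modules, and using that $\HH^0(X,-)=\HH^0(Y,\pi_*(-))$ since $\pi$ is affine, yields an injection $\HH^0(X,\mathcal{O}_X(E))^{(j)}\hookrightarrow \HH^0(X,\mathcal{O}_X(E))^{(j+1)}$ and a natural $k\overline{G}$-linear map
\[
\HH^0(X,\mathcal{O}_X(E))^{(j+1)}/\HH^0(X,\mathcal{O}_X(E))^{(j)}\longrightarrow \HH^0\!\left(Y,\pi_*\mathcal{O}_X(E)^{(j+1)}/\pi_*\mathcal{O}_X(E)^{(j)}\right)=\HH^0(Y,\mathcal{L}_j).
\]
By Proposition \ref{prop:filtersheaf}(i), $\mathcal{L}_j\cong S_{\chi^{-j}}\otimes_k\mathcal{O}_Y(E_j)$ as $\mathcal{O}_Y$-$\overline{G}$-modules, so the target is $S_{\chi^{-j}}\otimes_k\HH^0(Y,\mathcal{O}_Y(E_j))$; thus it remains to prove that the displayed map is an isomorphism.

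The injectivity of the displayed map comes from the long exact sequence in cohomology: applying $\HH^*(Y,-)$ to $0\to\pi_*\mathcal{O}_X(E)^{(j)}\to\pi_*\mathcal{O}_X(E)^{(j+1)}\to\mathcal{L}_j\to 0$ shows the cokernel of $\HH^0(X,\mathcal{O}_X(E))^{(j)}\hookrightarrow\HH^0(X,\mathcal{O}_X(E))^{(j+1)}$ injects into $\HH^0(Y,\mathcal{L}_j)$. For surjectivity I would argue by comparing $k$-dimensions, which is where the hypotheses on degrees enter decisively. On the one hand, summing the injections over $j=0,\dots,p^{n_I}-1$ gives
\[
\dim_k\HH^0(X,\mathcal{O}_X(E))=\dim_k\HH^0(X,\mathcal{O}_X(E))^{(p^{n_I})}\le\sum_{j=0}^{p^{n_I}-1}\dim_k\HH^0(Y,\mathcal{L}_j),
\]
using that $\mathcal{J}^{p^{n_I}}=kI(\tau-1)^{p^{n_I}}=0$ so that $\HH^0(X,\mathcal{O}_X(E))^{(p^{n_I})}$ is everything. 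On the other hand, since $\mathrm{deg}(E)>2g(X)-2$ we have $\HH^1(X,\mathcal{O}_X(E))=0$, so Riemann--Roch gives $\dim_k\HH^0(X,\mathcal{O}_X(E))=\mathrm{deg}(E)+1-g(X)$; and when $n_I\ge 1$, Proposition \ref{prop:filtersheaf}(ii) gives $\HH^1(Y,\mathcal{O}_Y(E_j))=0$ with $\mathrm{deg}(E_j)>2g(Y)-2$, so $\dim_k\HH^0(Y,\mathcal{L}_j)=\dim_k\HH^0(Y,\mathcal{O}_Y(E_j))=\mathrm{deg}(E_j)+1-g(Y)$. (When $n_I=0$ the statement is trivial, as $I$ is trivial, $\pi$ is the identity, $Y=X$, $E_0=E$, and there is only $j=0$.) The key identity to verify is therefore the arithmetic fact that $\mathrm{deg}(E)+1-g(X)=\sum_{j=0}^{p^{n_I}-1}\bigl(\mathrm{deg}(E_j)+1-g(Y)\bigr)$, which forces equality throughout and hence surjectivity of each comparison map. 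This reduces to a Riemann--Hurwitz computation tracking the contributions of $\lfloor\,\cdot\,\rfloor$ in Equation (\ref{eq:Ej}): summing the floor expressions over all $j$, using the stratification by the integer $t$ with its $p$-adic digits $a_{\ell,t}$ and the identity $(p-1)\sum_\ell p^{n_{x(y)}-\ell}=p^{n_{x(y)}}-1$, recovers $\sum_{x\in X}e_x$ minus the ramification correction, exactly as in the chain of (in)equalities displayed in the proof of Proposition \ref{prop:filtersheaf}(ii) but now with all inequalities replaced by the combinatorial identity.

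The main obstacle I anticipate is precisely this bookkeeping: showing that $\sum_{j=0}^{p^{n_I}-1}\mathrm{deg}(E_j)$ equals the expected value requires carefully grouping the $p^{n_I}$ values of $j$ according to which point $x(y)$ above a given $y$ they see — recall $t$ depends on $y$ through $n_{x(y)}$ via the condition $p^{n_I-n_{x(y)}}t\le j<p^{n_I-n_{x(y)}}(t+1)$, so a fixed $t$ is hit by $p^{n_I-n_{x(y)}}$ values of $j$ — and then summing $\sum_{t=0}^{p^{n_{x(y)}}-1}\bigl\lfloor(e_{x(y)}-\sum_\ell a_{\ell,t}p^{n_{x(y)}-\ell}i_\ell(x(y)))/p^{n_{x(y)}}\bigr\rfloor$ against the weights $p^{n_I-n_{x(y)}}$. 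One verifies this sum equals $p^{n_I-n_{x(y)}}\bigl(e_{x(y)}-\sum_\ell(p-1)p^{n_{x(y)}-\ell}i_\ell(x(y))\bigr)/p^{n_{x(y)}}$ up to an integer correction that precisely accounts for the genus change $p^{n_I}(1-g(Y))-(1-g(X))$ via Riemann--Hurwitz. Since the analogous computation is carried out in \cite[\S4]{BleherChinburgKontogeorgis2020} for the case $\mathcal{F}=\Omega_X$ and only the substitution of $\mathcal{O}_X(E)$ for $\Omega_X$ (equivalently, of general $e_x$ for the ramification exponents $\sum_i(\#I_{x,i}-1)$) is needed, I would present this as "the same argument as in \cite[Lemma 4.2]{BleherChinburgKontogeorgis2020}, using Proposition \ref{prop:filtersheaf} in place of \cite[Prop. 4.1]{BleherChinburgKontogeorgis2020}, together with the vanishing $\HH^1(X,\mathcal{O}_X(E))=0$ and $\HH^1(Y,\mathcal{O}_Y(E_j))=0$ from Proposition \ref{prop:filtersheaf}(ii)," and record the degree identity as the one new point requiring the hypothesis $\mathrm{deg}(E)>2g(X)-2$.
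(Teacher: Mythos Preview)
Your proposal is correct and follows essentially the same approach as the paper, which simply defers to the dimension argument of \cite[Lemma 4.2]{BleherChinburgKontogeorgis2020} with $\Omega_X$ replaced by $\mathcal{O}_X(E)$ and Proposition~\ref{prop:filtersheaf} in place of \cite[Prop.~4.1]{BleherChinburgKontogeorgis2020}. The combinatorial bookkeeping you anticipate in the final paragraph can be bypassed entirely: since $\chi(X,\mathcal{O}_X(E))=\chi(Y,\pi_*\mathcal{O}_X(E))=\sum_{j}\chi(Y,\mathcal{L}_j)$ by finiteness of $\pi$ and additivity of Euler characteristics in the short exact sequences $0\to\pi_*\mathcal{O}_X(E)^{(j)}\to\pi_*\mathcal{O}_X(E)^{(j+1)}\to\mathcal{L}_j\to 0$, the two $\HH^1$-vanishings immediately convert this into the required equality of $\HH^0$-dimensions.
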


We are now ready to prove Theorem \ref{thm:main3}, using Proposition \ref{prop:filtersheaf} and Lemma \ref{lem:dimensionargument}, together with the results of \cite[\S3]{Nakajima1986} (see also \cite[Thm. 4.5]{Kock2004}) applied to the tamely ramified cover $\lambda:Y\to Z$.

\begin{proof}[Proof of Theorem $\ref{thm:main3}$]
Under the assumptions of Theorem \ref{thm:main3}, fix $0\le j \le p^{n_I}-1$, and let $E_j$, $\ell_{y(z),j}$ and $n_j$ be as in Equations (\ref{eq:Ej}), (\ref{eq:ellzj}) and (\ref{eq:nj}), respectively. By \cite[Thm. 2]{Nakajima1986} and \cite[Thm. 4.5]{Kock2004} and using that $\HH^1(Y,\mathcal{O}_Y(E_j))=0$, we obtain that $\HH^0(Y,\mathcal{O}_Y(E_j))$ is a projective $k\overline{G}$-module whose Brauer character $\beta(j)$ is given as
\begin{equation}
\label{eq:betaj}
\beta(j)=\sum_{z\in Z_{\mathrm{br}}} \left( \sum_{d=1}^{\ell_{y(z),j}}\mathrm{Ind}_{\overline{G}_{y(z)}}^{\overline{G}} \theta_{y(z)}^{-d} -  \sum_{d=1}^{c_{y(z)}-1} \frac{d}{c_{y(z)}} \mathrm{Ind}_{\overline{G}_{y(z)}}^{\overline{G}} \theta_{y(z)}^{d}\right) + n_j\,\beta(k\overline{G})
\end{equation}
where $\beta(k\overline{G})$ is the Brauer character of  $k\overline{G}$. 

Let $y\in Y$ and $i\in\mathbb{Z}$. Since $c_y=\#\overline{G}_y$ is not divisible by $p$, $\theta_y^i$ is the character of a projective $k\overline{G}_y$-module, which implies that $\mathrm{Ind}_{\overline{G}_y}^{\overline{G}} \theta_y^i$ is the character of a projective $k\overline{G}$-module. Moreover, using Frobenius reciprocity and Notation \ref{not:main3}(d), we see that the socle of this projective $k\overline{G}$-module equals
$$\bigoplus_{\genfrac{}{}{0pt}{1}{0\le a\le c-1}{a\equiv i\,\varphi(y) \;\mathrm{mod}\; c_y}} S_a$$
when we view $S_a$ as a $k\overline{G}$-module. Therefore, the Brauer character $\beta(j)$ in Equation (\ref{eq:betaj}) is an explicitly given $\mathbb{Q}$-linear combination of Brauer characters of projective indecomposable $k\overline{G}$-modules, which implies that the rational coefficients must in fact be non-negative integers. It follows that, for all $0\le a\le c-1$, $n(a,j)$ from Equation (\ref{eq:naj}) equals the multiplicity of the projective indecomposable $k\overline{G}$-module with socle $S_a$ as a direct summand of the projective $k\overline{G}$-module $\HH^0(Y,\mathcal{O}_Y(E_j))$.

Let $M=\HH^0(X,\mathcal{O}_X(E))$, and let $0\le a\le c-1$ and $1\le b \le p^n$. Since by Lemma \ref{lem:dimensionargument},
$$M^{(j+1)}/M^{(j)}\cong   S_{\chi^{-j}} \otimes_k \HH^0(Y, \mathcal{O}_Y(E_j))$$
as $k\overline{G}$-modules, we have that $n(a,j)$ from Equation (\ref{eq:naj}) equals the multiplicity of the projective indecomposable $k\overline{G}$-module with socle $S_{\chi^{-j}.a}$ as a direct summand of the projective $k\overline{G}$-module $M^{(j+1)}/M^{(j)}$. 

Since the $k$-dimension of all projective indecomposable $k\overline{G}$-modules is equal to $p^{n-n_I}$, it follows that if $b$ is not a multiple of $p^{n-n_I}$ then $U_{a,b}$ is not a direct $kG$-module summand of $M$. On the other hand, if $b=(j+1)p^{n-n_I}$ then the description of the ascending composition factors of $U_{a,b}$ from Notation \ref{not:main3ind}(c) shows that $U_{a,b}^{(i+1)}/U_{a,b}^{(i)}$ is a projective $k\overline{G}$-module with socle $S_{\chi^{-i}.a}$ for $0\le i\le j$, and $U_{a,b}^{(i+1)}/U_{a,b}^{(i)}=0$ for $j+1\le i \le p^{n_I}-1$. Hence, $n(a,j)$ equals the number of indecomposable $kG$-module summands of $M=\HH^0(X,\mathcal{O}_X(E))$ whose socle equals $S_a$ and whose $k$-dimension is a multiple of $p^{n-n_I}$ that is greater than or equal to $(j+1)p^{n-n_I}$.

Therefore, for $0\le j\le p^{n_I}-2$, $n(a,j)-n(a,j+1)$ equals the number of indecomposable $kG$-module summands of $\HH^0(X,\mathcal{O}_X(E))$ that are isomorphic to $U_{a,(j+1)p^{n-n_I}}$, and $n(a,p^{n_I}-1)$ equals the number of indecomposable $kG$-module summands of $\HH^0(X,\mathcal{O}_X(E))$ that are isomorphic to $U_{a,\,p^{n_I}p^{n-n_I}}=U_{a,p^n}$. Since there are no additional indecomposable $kG$-modules occurring as direct summands of $\HH^0(X,\mathcal{O}_X(E))$, it follows that $n_{a,b}$ from Equation (\ref{eq:nab}) equals the number of indecomposable direct $kG$-module summands of $\HH^0(X,\mathcal{O}_X(E))$ that are isomorphic to $U_{a,b}$. This completes the proof of Theorem \ref{thm:main3}.
\end{proof}

Using some of the arguments from the proof of Theorem \ref{thm:main3} above, we obtain the following more explicit  version of the algorithm provided in \cite[Remark 4.4]{BleherChinburgKontogeorgis2020} concerning $\HH^0(X,\Omega_X)$. To distinguish this case from the case discussed in Theorem \ref{thm:main3}, we add decorations $\Omega$ as needed.

\begin{remark}
\label{rem:refereerequest!}
Suppose $k$, $G$ and $X$ are as in Assumption $\ref{ass:main3}$, and assume Notations $\ref{not:main3ind}$ and $\ref{not:main3}$. 

For $0\le j\le p^{n_I}-1$, define a $G$-invariant Weil divisor $D_j^\Omega=\sum_{y\in Y} d_{y,j}^\Omega\,y$ by
\begin{equation}
\label{eq:DjBCK}
d_{y,j}^\Omega = \left\lfloor \frac{\sum_{\ell = 1}^{n_{x(y)}} p^{n_{x(y)}-\ell}\,\left(p-1+(p-1-a_{\ell,t})\,i_\ell(x(y))\right)}{p^{n_{x(y)}}}
\right\rfloor.
\end{equation}
where $x(y)\in X$ is a point above $y$, $t\ge 0$ is the unique integer with $p^{n_I-n_{x(y)}}\,t \le j < p^{n_I-n_{x(y)}}(t+1)$, and $t = a_{1,t} + a_{2,t}\,p + \cdots + a_{n_{x(y)},t}\,p^{n_{x(y)}-1}$ is its $p$-adic expansion, with $a_{1,t},\ldots, a_{n_{x(y)},t}\in\{0,1,\ldots,p-1\}$.
For $z\in Z$, let $y(z)\in Y$ be such that $\lambda(y(z))=z$. Define $\ell_{y(z),j}^\Omega\in\{0,1,\ldots, c_{y(z)}-1\}$ by
\begin{equation}
\label{eq:ellzjBCK}
\ell_{y(z),j}^\Omega\equiv -d_{y(z),j}^\Omega\mod c_{y(z)},
\end{equation}
and define
\begin{equation}
\label{eq:njBCK}
n_j^\Omega = g(Z)-1+\sum_{z\in Z} \frac{d_{y(z),j}^\Omega+\ell_{y(z),j}^\Omega}{c_{y(z)}} .
\end{equation}
For $0\le a\le c-1$, define
\begin{equation}
\label{eq:najBCK}
n(a,j)^\Omega = \delta_{j,p^{n_I}-1}\cdot \delta_{a,0}+\sum_{z\in Z_{\mathrm{br}}} \left( \sum_{d=1}^{c_{y(z)}-1} \frac{d}{c_{y(z)}} \mu_{a,d}(y(z)) - \sum_{d=1}^{\ell_{y(z),j}^\Omega}\mu_{a,-d}(y(z)) \right) + n_j^\Omega ,
\end{equation}
and define $\mu_{a,\chi}$ to be 1 if $S_a \cong S_\chi$ and to be 0 otherwise.

Then, for $1\le b\le p^n$, the number $n_{a,b}^\Omega$ of indecomposable direct $kG$-module summands of $\HH^0(X,\Omega_X)$ that are isomorphic to $U_{a,b}$ is given as 
\begin{equation}
\label{eq:nabBCK}
n_{a,b}^\Omega=\left\{\begin{array}{cl}
n(a,j)^\Omega-n(a,j+1)^\Omega & \mbox{if $b=(j+1)\,p^{n-n_I}$ and $0\le j \le p^{n_I}-2$,}\\
\mu_{a,\chi}&\mbox{if $b=(p^{n_I}-1)\,p^{n-n_I}+1$ and $n_I<n$,}\\
n(a,p^{n_I}-1)^\Omega + (\delta_{n,n_I}-1)\cdot\mu_{a,\chi} & \mbox{if $b=p^{n_I}\,p^{n-n_I}=p^n$,}\\
0 &\mbox{otherwise.}
\end{array}\right.
\end{equation}
As in Remark \ref{rem:computation}, if $z\in Z-Z_\mathrm{br}$ then  $\ell_{y(z),j}^\Omega=0$ for $0\le j\le p^{n_I}-1$. Moreover, for $0\le j\le p^{n_I}-2$, 
$$n(a,j)^\Omega-n(a,j+1)^\Omega=-\delta_{j,p^{n_I}-2}\cdot \delta_{a,0}+\sum_{z\in Z_{\mathrm{br}}} \left( \sum_{d=1+\mathrm{min}\{\ell_{y(z),j}^\Omega,\ell_{y(z),j+1}^\Omega\}}^{\mathrm{max}\{\ell_{y(z),j}^\Omega,\ell_{y(z),j+1}^\Omega\}}\epsilon_{z,j}^\Omega\,\mu_{a,-d}(y(z))\right) + n_j^\Omega-n_{j+1}^\Omega$$
where $\epsilon_{j,z}^\Omega=1$ if $\ell_{y(z),j}^\Omega\le \ell_{y(z),j+1}^\Omega$ and $\epsilon_{j,z}^\Omega=-1$ otherwise.

To prove the formula in Equation (\ref{eq:nabBCK}), we note that the definitions in Equations (\ref{eq:DjBCK}), (\ref{eq:ellzjBCK}) and (\ref{eq:njBCK}) coincide with the corresponding definitions in \cite[Remark 4.4]{BleherChinburgKontogeorgis2020}, where we use the Riemann-Hurwitz formula to obtain Equation (\ref{eq:njBCK}). Let $M=\HH^0(X,\Omega_X)$, and let $0\le a\le c-1$ and $1\le b \le p^n$. 

Suppose first that $j<p^{n_I}-1$ or $n_I=n$. By Step (2) of \cite[Remark 4.4]{BleherChinburgKontogeorgis2020}, it follows that $M^{(j+1)}/M^{(j)}$ is a projective $k\overline{G}$-module. Moreover, since $S_{c-a}$ is the $k$-dual of $S_a$, we can use Step (2) of \cite[Remark 4.4]{BleherChinburgKontogeorgis2020}, together with similar arguments as in the proof of Theorem \ref{thm:main3} above, to see that $n(a,j)^\Omega$ from Equation (\ref{eq:najBCK}) equals the multiplicity of the projective indecomposable $k\overline{G}$-module with socle $S_{\chi^{-j}.a}$ as a direct summand of the projective $k\overline{G}$-module $M^{(j+1)}/M^{(j)}$. 

Suppose next that $j=p^{n_I}-1$ and $n_I<n$. Since the order of the character $\chi$ divides $p-1$, $\chi^{-(p^{n_I}-1)}$ is the trivial character. By Step (2) of \cite[Remark 4.4]{BleherChinburgKontogeorgis2020}, it follows that $M^{(p^{n_I})}/M^{(p^{n_I}-1)}$ is a direct sum of $S_\chi$ and a projective $k\overline{G}$-module. Moreover, we can use Step (2) of \cite[Remark 4.4]{BleherChinburgKontogeorgis2020} and similar arguments as in the proof of Theorem \ref{thm:main3} above, to see that $n(a,p^{n_I}-1)^\Omega-\mu_{a,\chi}$ equals the multiplicity of the projective indecomposable $k\overline{G}$-module with socle $S_{\chi^{-(p^{n_I}-1)}.a}=S_a$ as a direct summand of $M^{(p^{n_I})}/M^{(p^{n_I}-1)}$. Moreover, $n(a,p^{n_I}-1)^\Omega$ equals the multiplicity of $S_{\chi^{-(p^{n_I}-1)}.a}=S_a$ as a direct summand of the socle of $M^{(p^{n_I})}/M^{(p^{n_I}-1)}$. 

Using similar arguments as in the proof of Theorem \ref{thm:main3}, it follows that $n_{a,b}^\Omega$ from Equation (\ref{eq:nabBCK}) equals the number of indecomposable direct $kG$-module summands of $\HH^0(X,\Omega_X)$ that are isomorphic to $U_{a,b}$. 
\end{remark}

We end this section with a computational remark, which will be useful in the next sections.

\begin{remark}
\label{rem:combine}
Suppose Assumption \ref{ass:main3} holds, and assume Notations \ref{not:main3ind} and \ref{not:main3}. 
\begin{enumerate}
\item[(a)]
By applying the Riemann-Hurwitz formula, $n_j$ from Equation (\ref{eq:nj}) can also be written as
$$n_j = \frac{  \mathrm{deg}(E_j)+1-g(Y)}{\#\overline{G}}  + \sum_{z\in Z_{\mathrm{br}}} \frac{1}{c_{y(z)}}\left( \frac{c_{y(z)}-1}{2}-\ell_{y(z),j}\right)$$
which in some applications may be easier to use (see, for example, \S\ref{ss:N1structure}).
\item[(b)]
Let $y\in Y$ be a ramification point of the tame cover $\lambda:Y\to Z$, and let $x\in X$ be a point above it. If $I_{x}$ (resp. $G_{x}$) is the inertia group of $x$ inside $I$ (resp. $G$), we obtain $\overline{G}_y\cong G_{x}/I_{x}$. The fundamental character $\theta_x:G_{x}\to k(x)^*=k^*$ is given by $\theta_x(g) = \frac{g.\pi_x}{\pi_x}\mod \mathfrak{m}_{X,x}$ where $\pi_x$ is a uniformizer and $\mathfrak{m}_{X,x}$ is the maximal ideal of $\mathcal{O}_{X,x}$. Because $k^*$ has no non-identity elements of $p$-power order, $\theta_x$ factors through $G_{x}/I_{x}$ which we identify with $\overline{G}_y$. 
Since $\mathrm{ord}_x(\pi_y)=\#I_{x}$ and since $k(y)=k(x)=k$ is algebraically closed,
we can identify $\theta_y = \left(\theta_x\right)^{\#I_{x}}$ when viewing both as characters of $\overline{G}_y=G_{x}/I_{x}$.
\end{enumerate}
\end{remark}


\section{An application to deformation theory}
\label{s:deformation}

In this section, we show an application of Corollaries \ref{thm:oldmain} and \ref{thm:main3cor} to deformation theory and the equivariant deformation problem introduced in \cite{BertinMezard2000}. Let $X$ be a smooth projective curve defined over an algebraically closed field $k$ of prime characteristic $p$, and suppose $G$ is a finite group acting faithfully on the right on $X$ over $k$. Suppose $\mathcal{C}$ is the category of commutative local Artinian $k$-algebras with residue field $k$. Let $D_{\mathrm{gl}}:\mathcal{C}\to\mathrm{Sets}$ be the global deformation functor associated to the pair $(X,G)$ as introduced in \cite[\S 2]{BertinMezard2000}, and let $t_{D_{\mathrm{gl}}}$ be its tangent space. Using the arguments in \cite[\S3]{Kontogeorgis2007B} and in \cite[\S7]{KockTait2015}, we obtain, as in the beginning of the proof of \cite[Thm. 7.1]{KockTait2015}, that
\begin{equation}
\label{eq:dimtangent}
\mathrm{dim}_k\; t_{D_{\mathrm{gl}}} = \mathrm{dim}_k\; \HH^0(X,\Omega_X^{\otimes 2})_G
\end{equation}
where the subscript $G$ denotes the $G$-coinvariants of a $kG$-module.

\begin{proposition}
\label{prop:deformation}
Suppose $G$ has non-trivial cyclic Sylow $p$-subgroups. If $g(X)\ge 2$ then the $k$-dimension of the tangent space of the global deformation functor $D_{\mathrm{gl}}$ is uniquely determined by the class of canonical divisors on $X/G$ modulo principal divisors, together with the lower ramification groups and the fundamental characters of the closed points of $X$ that are ramified in the cover $X\to X/G$.

Under the hypotheses of Theorem $\ref{thm:main3}$,
we obtain
\begin{equation}
\label{eq:tangentdeformation}
\mathrm{dim}_k\; t_{D_{\mathrm{gl}}} = 
\sum_{j=0}^{p^{n_I}-1} n_{\chi^j.0,\,(j+1)p^{n-n_I}}
\end{equation}
where $\chi^j.0$ is as in Notation $\ref{not:main3ind}(c)$ and $n_{\chi^j.0,\,(j+1)p^{n-n_I}}$  is given by Equation $(\ref{eq:nab})$ from Theorem $\ref{thm:main3}$ applied to $E=2K_X$.
\end{proposition}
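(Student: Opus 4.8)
The plan is to combine Equation~(\ref{eq:dimtangent}), which says $\dim_k t_{D_{\mathrm{gl}}} = \dim_k \HH^0(X,\Omega_X^{\otimes 2})_G$, with Corollary~\ref{thm:main3cor} (applied with $m=2$, so $E=2K_X$), which gives the full $kG$-module decomposition of $\HH^0(X,\Omega_X^{\otimes 2})$ as a direct sum of the uniserial modules $U_{a,b}$. The first assertion — that $\dim_k t_{D_{\mathrm{gl}}}$ depends only on the stated invariants — is then immediate: by Corollary~\ref{thm:oldmain} the isomorphism class of the $kG$-module $\HH^0(X,\Omega_X^{\otimes 2})$ is determined by the class of $K_{X/G}$ modulo principal divisors and the ramification data, and taking $G$-coinvariants is a functorial operation on $kG$-modules, so its dimension is likewise determined. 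Here one first reduces to $k$ algebraically closed and $G$ $p$-hypo-elementary exactly as in the proof of Theorem~\ref{thm:newmain} at the start of \S\ref{s:mainthm}; note that $(-)_G$ commutes with the relevant field extensions and with Conlon induction in the appropriate sense, since $\dim_k M_G = \dim_k \Hom_{kG}(M,k) $ is additive and compatible with induction/restriction.

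For the explicit formula~(\ref{eq:tangentdeformation}), the key computation is $\dim_k (U_{a,b})_G$ for each indecomposable $U_{a,b}$. Since taking $G$-coinvariants is right exact and $U_{a,b}/(\sigma-1)U_{a,b}$ is the largest quotient on which $P=\langle\sigma\rangle$ acts trivially — and by Notation~\ref{not:main3ind} this quotient is the top composition factor $S_{\chi^{-b+1}.a}$ of the uniserial module $U_{a,b}$ — we get $(U_{a,b})_G = (S_{\chi^{-b+1}.a})_C$, which is $k$ if $S_{\chi^{-b+1}.a}$ is the trivial $kC$-module $S_0$ and $0$ otherwise. Thus $(U_{a,b})_G \ne 0$ exactly when $\chi^{-b+1}.a = 0$, i.e.\ when $a = \chi^{b-1}.0$. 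Now run over the $b$ that actually occur in the decomposition given by Theorem~\ref{thm:main3}: by Equation~(\ref{eq:nab}) the only nonzero multiplicities $n_{a,b}$ occur for $b = (j+1)p^{n-n_I}$ with $0\le j\le p^{n_I}-1$. Since the order of $\chi$ divides $p-1$ and $p^{n-n_I}\equiv 1\pmod{p-1}$... more carefully: $\chi^{b-1} = \chi^{(j+1)p^{n-n_I}-1}$, and because $\chi$ has order dividing $p-1$ while $p^{n-n_I}\equiv 1 \pmod{?}$ is not automatic, one instead simply observes that for $b=(j+1)p^{n-n_I}$ one needs $a = \chi^{(j+1)p^{n-n_I}-1}.0$; matching this against the indexing $\chi^j.0$ in~(\ref{eq:tangentdeformation}) uses the identity $\chi^{p^{n-n_I}}=\chi$, valid because $\mathrm{ord}(\chi)\mid p-1$ and $p^{n-n_I}\equiv 1\pmod{p-1}$, whence $\chi^{(j+1)p^{n-n_I}-1} = \chi^{j+1-1+ (j+1)(p^{n-n_I}-1)} $ collapses modulo $p-1$; tracking the exponent mod $p-1$ gives $\chi^{(j+1)p^{n-n_I}-1}=\chi^{j}$. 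So $(U_{a,b})_G\ne 0$ with $b=(j+1)p^{n-n_I}$ precisely when $a=\chi^j.0$, contributing exactly $1$ to the dimension for each such summand.

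Assembling these pieces: $\dim_k \HH^0(X,\Omega_X^{\otimes 2})_G = \sum_{a,b} n_{a,b}\,\dim_k(U_{a,b})_G = \sum_{j=0}^{p^{n_I}-1} n_{\chi^j.0,\,(j+1)p^{n-n_I}}$, which is~(\ref{eq:tangentdeformation}), with the multiplicities on the right supplied by Equation~(\ref{eq:nab}) of Theorem~\ref{thm:main3} applied to $E=2K_X$ (legitimate since $g(X)\ge 2$ forces $\deg(2K_X)=4g(X)-4>2g(X)-2$, so Assumption~\ref{ass:main3} is met). The main obstacle I expect is the bookkeeping in the second step: correctly identifying the top composition factor of $U_{a,b}$ and then chasing the character exponents through the congruence $p^{n-n_I}\equiv 1 \pmod{p-1}$ to see that the socle-index $a$ forced by $(U_{a,b})_G\ne0$ is exactly the $\chi^j.0$ appearing in the claimed formula — everything else is either a direct appeal to Corollary~\ref{thm:main3cor} or the elementary fact that $(-)_G$ is additive on direct sums and kills a uniserial module except through its $P$-coinvariant top quotient.
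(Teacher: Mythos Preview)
Your proof is correct and follows essentially the same approach as the paper: invoke Equation~(\ref{eq:dimtangent}) together with Corollary~\ref{thm:oldmain} for the first assertion, then compute $(U_{a,b})_G$ via the top composition factor $S_{\chi^{-b+1}.a}$ and use $\chi^{p^{n-n_I}}=\chi$ (since $\mathrm{ord}(\chi)\mid p-1$) to rewrite $\chi^{(j+1)p^{n-n_I}-1}=\chi^j$. The reduction step you sketch (to $k$ algebraically closed and $G$ $p$-hypo-elementary via Conlon induction) is unnecessary and should be dropped: \S\ref{s:deformation} already assumes $k$ algebraically closed, and the formula part is stated under the hypotheses of Theorem~\ref{thm:main3}, so the paper's proof simply cites Corollary~\ref{thm:oldmain} directly for the first paragraph and proceeds.
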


\begin{proof}
The first paragraph of the statement follows from Corollary \ref{thm:oldmain} using Equation (\ref{eq:dimtangent}). 

Assume now the hypotheses of Theorem $\ref{thm:main3}$ and let $E=2K_X$. By Equation (\ref{eq:nab}) in Theorem \ref{thm:main3}, the number of indecomposable direct $kG$-module summands of $\HH^0(X,\Omega_X^{\otimes 2})$ that are isomorphic to $U_{a,b}$ (for $0\le a \le c-1$ and $1\le b\le p^n$) is given by $n_{a,b}$. Since taking $G$-coinvariants defines an additive functor $(-)_G$ from the category of finitely generated $kG$-modules to the category of $k$-vector spaces, it follows from Equation (\ref{eq:dimtangent}) that 
$$\mathrm{dim}_k\; t_{D_{\mathrm{gl}}} = \sum_{a=0}^{c-1}\sum_{b=1}^{p^n} n_{a,b}\,\mathrm{dim}_k\;(U_{a,b})_G$$
where $n_{a,b}$ is as in Equation (\ref{eq:nab}) from Theorem $\ref{thm:main3}$ applied to $E=2K_X$. As discussed prior to Notation \ref{not:main3ind}, each $U_{a,b}$ is uniserial and its radical is equal to $(\sigma-1)U_{a,b}$. This implies that $(U_{a,b})_G\neq 0$ if and only if $U_{a,b}/\mathrm{rad}(U_{a,b})\cong S_0$ is the trivial simple $kG$-module. Using the description of the ascending composition factors of $U_{a,b}$ from Notation \ref{not:main3ind}(c), we have
$$U_{a,b}/\mathrm{rad}(U_{a,b})\cong S_{\chi^{-b+1}.a}$$
The latter module is isomorphic to $S_0$ if and only if $\chi^{-b+1}.a=0$, which is equivalent to $a=\chi^{b-1}.0$. We obtain 
$$\mathrm{dim}_k\; t_{D_{\mathrm{gl}}} = \sum_{b=1}^{p^n} n_{\chi^{b-1}.0,\,b}.$$
By Equation (\ref{eq:nab}) from Theorem $\ref{thm:main3}$, $n_{\chi^{b-1}.0,\,b}=0$ unless $b=(j+1)p^{n-n_I}$ for $0\le j\le p^{n_I}-1$. Since the order of the character $\chi$ divides $p-1$, it follows that $\chi^{p^{n-n_I}}=\chi$, which implies $\chi^{(j+1)p^{n-n_I}-1} =\chi^j$. Hence the formula in Equation (\ref{eq:tangentdeformation}) follows.
\end{proof}

\begin{remark}
\label{rem:deformation2}
Proposition \ref{prop:deformation} generalizes \cite[Cor. 2.3]{KockKontogeorgis2012} from cyclic $p$-groups to groups $G$ with cyclic Sylow $p$-subgroups, and it gives a formula when $G$ is $p$-hypo-elementary. In \cite[Thm. 7.1]{KockTait2015}, the $k$-dimension of $t_{D_{\mathrm{gl}}}$ has been determined for any finite group $G$ satisfying the additional assumption that $\mathrm{dim}_k\, M^G = \mathrm{dim}_k\, M_G$ for all finitely generated $kG$-modules $M$. However, for a $p$-hypo-elementary group $G$, this additional assumption does not hold in general.
\end{remark}


\section{A family of hyperelliptic curves}
\label{s:hyperelliptic}

In this section, we illustrate Corollaries \ref{thm:oldmain} and \ref{thm:main3cor} and Proposition \ref{prop:deformation} by considering a particular family of hyperelliptic curves. We freely utilize the notation introduced in \S\ref{s:intro} and \S \ref{s:mainthm} without referring to these sections explicitly. We make the following assumptions throughout this section.

\begin{assume}
\label{ass:hyperelliptic}
Let $m>1$ be an integer, let $p>3m$ be a prime number, and let $k$ be an algebraically closed field of characteristic $p$. Write the function field of $\mathbb{P}^1_k$ as $k(\mathbb{P}^1_k)=k(t)$, and consider the quadratic cover $X$ of $\mathbb{P}^1_k$ with function field $k(X)=k(t)(\sqrt{f})$ where 
$$f(t)=t^{p^2}-t=\prod_{\alpha\in\mathbb{F}_{p^2}}(t-\alpha).$$
Let $P=\langle \sigma\rangle$ be a cyclic group of order $p$, and let $C=\langle \rho\rangle$ be a cyclic group of order $2(p-1)$. Fix an isomorphism $\xi:C\xrightarrow{\;\cong\;} \mu_{2(p-1)}\subset k^*$ and define $\chi:C\to \mu_{p-1}=\mathbb{F}_p^*\subset k^*$ to be $\chi = \xi^2$. Define $G$ to be the semidirect product $G=P\rtimes_{\chi}C$, and let the generators $\sigma$ and $\rho$ of $G$ act on $k(X)$ as
$$\left\{\begin{array}{rclcrcl}
\sigma(t) &=& t+1,&\quad& \sigma(\sqrt{f}) &=&\sqrt{f},\\
\rho(t)&=&\chi^{-1}(\rho)\,t,&\quad& \rho(\sqrt{f}) &=&\xi^{-1}(\rho)\,\sqrt{f}.
\end{array}\right.$$
\end{assume}

In \S\ref{ss:datahyperelliptic}, we first determine the data needed to apply Theorem \ref{thm:main3} when $E=mK_X$. In \S\ref{ss:applyhyperelliptic}, we then use this theorem to determine the precise $kG$-module structure for $\HH^0(X,\Omega_X^{\otimes m})$. In \S\ref{ss:deformhyperelliptic}, we use Proposition \ref{prop:deformation} to determine the $k$-dimension of the tangent space $t_{D_{\mathrm{gl}}}$.


\subsection{Lower ramification groups, fundamental characters, and canonical divisors}
\label{ss:datahyperelliptic}

We first determine the lower ramification groups of the action of $G$ on $X$. Define $\nu=\rho^{p-1}\in C$. Then $\nu$ has order 2 and acts on $k(X)$ as $\nu(t)=t$ and $\nu(\sqrt{f}) = -\sqrt{f}$, which implies $k(X)^{\langle \nu\rangle}=k(t)$. Using that $\sigma(t^p-t)=t^p-t$ and that the character $\chi$ has order $p-1$, it moreover follows that $k(X)^{\langle \sigma,\nu\rangle}=k(t^p-t)$ and $k(X)^G=k( (t^p-t)^{p-1})$. Hence we can identify $X/\langle \nu\rangle$, $X/\langle \sigma,\nu\rangle$ and $X/G=Z$ with $\mathbb{P}^1_k$. We see that the cover $X\to X/\langle \nu \rangle$ has precisely $p^2+1$ branch points given by $\mathbb{A}^1(\mathbb{F}_{p^2})\cup\{\infty\}\subset X/\langle \nu\rangle$, which are all totally ramified. On the other hand, the cover $X/\langle \nu\rangle \to X/\langle \sigma,\nu\rangle$ has precisely one branch point, which is totally ramified. This branch point lies below $\infty\in X/\langle \nu\rangle$. Finally, the cover $X/ \langle \sigma,\nu\rangle\to X/G=Z$ has two branch points, which are totally ramified. These branch points lie below $\mathbb{A}^1(\mathbb{F}_p)\cup\{\infty\}\subset X/\langle \nu\rangle$. Hence, the branch points of the cover $X\to X/G=Z$ are the points on $Z$ lying below $\mathbb{A}^1(\mathbb{F}_{p^2})\cup\{\infty\}\subset X/\langle \nu\rangle$. We note that $\alpha^p-\alpha=(\Phi-1)(\alpha)$ for all $\alpha\in k$, where $\Phi$ is the Frobenius automorphism of $k$. Since $\Phi$ restricts to the unique order 2 automorphism of $\mathbb{F}_{p^2}$, it follows that $(\alpha^p-\alpha)^{p-1}$ equals $0$ (resp. $-1$) if $\alpha\in \mathbb{F}_p$ (resp.  $\alpha\in\mathbb{F}_{p^2}-\mathbb{F}_p$). Therefore, the cover $X\to X/G=Z$ has 3 branch points given by $z_0=0$, $z_1=-1$ and $z_\infty=\infty$. 

For $u\in\{0,1,\infty\}$, let $x_u\in X$ be above $z_u$. We claim we have the following lower ramification groups
\begin{equation}
\label{eq:ramhyper}
\left\{\begin{array}{c@{\;}c@{\;}c@{\;}c@{\;}cc@{\;}c@{\;}cc@{\;}c@{\;}cc@{\;}c@{\;}c}
G_{x_0}&=&G_{x_0,0}&=&\langle \rho\rangle,& G_{x_0,1}&=&1,\\
G_{x_1}&=&G_{x_1,0}&=&\langle \nu\rangle,& G_{x_1,1}&=&1,\\
G_{x_\infty}&=&G_{x_\infty,0}&=&G,&G_{x_\infty,1}&=&P, &G_{x_\infty,2}&=&P, & G_{x_\infty,3}&=&1.
\end{array}\right.
\end{equation}
The discussion in the previous paragraph gives us the inertia groups $G_{x_u}=G_{x_u,0}$ for $u\in\{0,1,\infty\}$. This leads to the first two rows of Equation (\ref{eq:ramhyper}) since the inertia groups of $x_0$ and $x_1$ are $p'$-groups. For the third row, we use that $f=t^{p^2+1}(t^{-1}-t^{-p^2})$ to obtain that $\pi_\infty = \sqrt{f}\, t^{-(p^2+1)/2}$ is a uniformizer of $\mathcal{O}_{X,x_\infty}$. We have
$$\frac{\sigma.\pi_\infty}{\pi_\infty} = \frac{\sqrt{f}\, (t+1)^{-(p^2+1)/2}}{\sqrt{f} \, t^{-(p^2+1)/2}} = 
(1+t^{-1})^{-(p^2+1)/2} = 1 - \frac{p^2+1}{2}\,t^{-1} + \cdots$$
and hence
$$\sigma . \pi_\infty \equiv \pi_\infty \mod \left(\frac{p^2+1}{2}\,t^{-1}\pi_\infty\right).$$
Since
$$\mathrm{ord}_{x_\infty}\left(\frac{p^2+1}{2}\,t^{-1}\pi_\infty\right) = 2+1 = 3,$$
we obtain $P_{x_\infty,i}=P$ for $i\in\{0,1,2\}$ and $P_{x_\infty,i}=1$ for $i\ge 3$, which leads to the third row of Equation (\ref{eq:ramhyper}).

Since $x_\infty\in X$ is the only point for which the order of its inertia group is divisible by $p$, it follows that $I=P$ is the greatest among the Sylow $p$-subgroups of the inertia groups of all closed points on $X$. Hence $I=I_{x_\infty}=P=\langle\sigma\rangle$ and $x_\infty$ is the unique ramification point of the cover $\pi:X\to Y=X/I$. 

For $u\in\{0,1,\infty\}$, let $y_u=\pi(x_u)$. If $\overline{G}=G/I$, the orbits $\overline{G}.y_0$ and $\overline{G}.y_\infty$ are each singletons, and the orbit $\overline{G}.y_1$ contains $p-1$ points of $Y$. The points in these orbits are the only ramification points of the cover $\lambda:Y\to Z=X/G$. Identifying $\overline{G}=\langle \rho\rangle$, we obtain the following inertia groups:
\begin{equation}
\label{eq:inertiatamehyperell}
\overline{G}_{y_0} = \langle \rho \rangle = \overline{G}_{y_\infty} 
\quad\mbox{and}\quad \overline{G}_{y_1} = \langle \nu \rangle.
\end{equation}
By the Riemann-Hurwitz formula, it follows that $g(Y)=(p-1)/2$. For $u\in\{0,1,\infty\}$, we obtain the following uniformizer $\pi_u$ of $\mathcal{O}_{X,x_u}$, where $\pi_\infty$ is as above:
$$\pi_0=\sqrt{f}=\pi_1\quad\mbox{and}\quad\pi_\infty=\sqrt{f} \,t^{-(p^2+1)/2}.$$
By Remark \ref{rem:combine}(b), for each $u\in\{0,1,\infty\}$, we can identify the corresponding fundamental character $\theta_{y_u}$ with $(\theta_{x_u})^{\#I_{x_u}}$ on $\overline{G}_{y_u}=G_{x_u}/I_{x_u}$. Considering the action of $\rho$ on the uniformizer $\pi_u$, this leads to the fundamental characters
\begin{equation}
\label{eq:fundamentalhyperell}
\left\{\begin{array}{c@{\;}c@{\;}l}
\theta_{y_0}&=&\xi^{-1},\\
\theta_{y_1}&=&\mathrm{Res}^{\langle\rho\rangle}_{\langle \nu\rangle}\left(\xi^{-1}\right),\\
\theta_{y_\infty}&=&\left(\xi^{-1}\cdot \chi^{(p^2+1)/2}\right)^p=\xi^{p^3}=\xi^p,
\end{array}\right.
\end{equation}
where we use that the order of $\xi$ is $2p-2$ and that $p^3\equiv p \mod 2p-2$. Using Equations ({\ref{eq:inertiatamehyperell}) and (\ref{eq:fundamentalhyperell}}), we obtain the following values for $c_{y_u}$ and $\varphi(y_u)$ from Notation \ref{not:main3}(d):
\begin{equation}
\label{eq:newformulahyper}
\left\{\begin{array}{r@{\;}c@{\;}lr@{\;}c@{\;}l}
c_{y_0} & = &  2p-2, & \varphi(y_0) &=& 2p-3,\\
c_{y_1} & = &  2, & \varphi(y_1) &=& 1,\\
c_{y_\infty} &=& 2p-2, & \varphi(y_{\infty}) &=& p.
\end{array}\right.
\end{equation}

Since $z_\infty=\infty$ on $Z=\mathbb{P}^1_k$, we have that $K_Z=-2\,z_\infty$ is a canonical divisor on $Z$. Therefore, $K_Y=\lambda^*(-2\,z_\infty) + \mathrm{Ram}_\lambda$ is a canonical divisor on $Y$. Using Equation (\ref{eq:inertiatamehyperell}) to determine $\mathrm{Ram}_\lambda$, we  obtain
\begin{equation}
\label{eq:KYhyperell}
K_Y = (2p-3)y_0 + \sum_{y \in \overline{G}.y_1} y \;-\; (2p-1)y_\infty.
\end{equation}


\subsection{The $kG$-module structure of the holomorphic poly-differentials}
\label{ss:applyhyperelliptic}

We use Corollary \ref{thm:main3cor}, and in particular Theorem \ref{thm:main3} applied to $E=mK_X$, to determine the $kG$-module structure for $\HH^0(X,\Omega_X^{\otimes m})$ for all $m>1$. As seen in \S\ref{ss:datahyperelliptic}, $I=P=\langle \sigma\rangle$, and hence $n_I=1$. Moreover, $y_\infty\in Y$ is the only branch point of $\lambda:X\to Y$, and $I_{x_\infty,i}=I$ for $i\in \{0,1,2\}$ and $I_{x_\infty,i}=1$ for $i\ge 3$. By Corollary \ref{thm:main3cor}, we have  $p$ divisors $D_j$ on $Y$, for $0\le j\le p-1$, which are given as $D_j=d_{y_\infty,j}\, y_\infty$ where
$$d_{y_\infty,j} = \left\lfloor \frac{3m(p-1)-2j}{p}\right\rfloor = \left\lfloor 3m - \frac{3m+2j}{p}\right\rfloor.$$
Since we assume $p>3m$, we can rewrite this as follows.
Define
\begin{equation}
\label{eq:deltahyperell}
\delta_m=\left\{\begin{array}{ll} 
0, & \mbox{$m$ even},\\
1, & \mbox{$m$ odd},
\end{array}\right.
\end{equation}
and define
\begin{eqnarray*}
A_1 &=& \left\{0,1,\ldots,\frac{p-3m-1+\delta_m}{2}\right\},\\
A_2 &=& \left\{\frac{p-3m-1+\delta_m}{2}+1,\ldots, \frac{2p-3m-\delta_m}{2}\right\},\\
A_3 &=& \left\{\frac{2p-3m-\delta_m}{2}+1,\ldots, p-1\right\}.
\end{eqnarray*}
Then $D_j = (3m-\ell)\,y_\infty$ if $j\in A_\ell$ for $1\le \ell \le 3$. 

We now apply Theorem \ref{thm:main3} to $E=mK_X$. We have $Z_{\mathrm{br}}=\{z_0,z_1,z_\infty\}$. Using that $E_j = mK_Y+D_j$, for $0\le j \le p-1$, together with Equations (\ref{eq:newformulahyper}) and (\ref{eq:KYhyperell}), we obtain, for $0\le j\le p-1$, 
$$\ell_{y_u,j}=\left\{ \begin{array}{ll}
2p-2-m,&\mbox{$u=0$},\\[1ex]
\delta_m, & \mbox{$u=1$},\\[1ex]
2m-\ell, & \mbox{$u=\infty$, $j\in A_\ell$ for $1\le \ell \le 3$}.
\end{array}\right.$$
Since $\ell_{y,j}=0$ for all $y\in Y$ above $z\in Z- Z_{\mathrm{br}}$, we have, using Equation (\ref{eq:KYhyperell}), 
$$n_j = 1-0 + \frac{m(2p-3)-(2p-2-m)}{2p-2} + \frac{m-\delta_m}{2} + \frac{-m(2p-1)+(3m-\ell)-(2m-\ell)}{2p-2}=\frac{m-\delta_m}{2}.$$

Using Equation (\ref{eq:naj}) and Remark \ref{rem:computation}, it follows immediately that 
$$n_{a,b}=0\qquad\mbox{for $b\not\in\{\frac{p-3m-1+\delta_m}{2}+1,\frac{2p-3m-\delta_m}{2}+1,p\}$}.$$
Additionally,
$$n_{a,\frac{p-3m+1+\delta_m}{2}}=\mu_{a,-(2m-1)}(y_\infty)\quad\mbox{and}\quad
n_{a,\frac{2p-3m+2-\delta_m}{2}}=\mu_{a,-(2m-2)}(y_\infty).$$
By Equation (\ref{eq:newformulahyper}), $\mu_{a,-2m+1}(y_\infty)=1$ if and only if $a\equiv (-2m+1)p \mod (2p-2)$ and $\mu_{a,-2m+2}(y_\infty)=1$ if and only if $a\equiv (-2m+2)p \mod (2p-2)$. Since $(-2m+1)p=-m(2p-2)+p-2m$ and $(-2m+2)p=-m(2p-2)+2p-2m$, the only non-projective indecomposable $kG$-modules that are direct summands of $\HH^0(X,\Omega_X^{\otimes m})$ are $U_{p-2m,\frac{p-3m+1+\delta_m}{2}}$ and $U_{2p-2m,\frac{2p-3m+2-\delta_m}{2}}$ and they each occur with multiplicity one.

It remains to determine the number $n_{a,p}$ of projective indecomposable direct $kG$-module summands of $\HH^0(X,\Omega_X^{\otimes m})$ of the form $U_{a,p}$, for $0\le a\le 2p-3$. Since $n_{a,p}=n(a,p-1)$, we obtain, using Equation (\ref{eq:naj}),
\begin{eqnarray*}
n_{a,p}&=& \sum_{u\in\{0,1,\infty\}}\left( \sum_{d=1}^{\ell_{y_u,p-1}}\mu_{a,-d}(y_u) -  \sum_{d=1}^{c_{y_u}-1} \frac{d}{c_{y_u}}\, \mu_{a,d}(y_u)\right) + n_{p-1}\\
&=& \left(\sum_{d=1}^{2p-2-m} \mu_{a,-d}(y_0) -\sum_{d=1}^{2p-3}\frac{d}{2p-2}\,\mu_{a,d}(y_0)\right)
+ \left(\delta_m\,\mu_{a,-1}(y_1) - \frac{1}{2}\,\mu_{a,1}(y_1)\right)\\
&&+ \left(\sum_{d=1}^{2m-3}\mu_{a,-d}(y_\infty)  - \sum_{d=1}^{2p-3}\frac{d}{2p-2}\,\mu_{a,d}(y_\infty) \right)
+ \frac{m-\delta_m}{2}.
\end{eqnarray*}
Since $2p-3\equiv -1 \mod (2p-2)$, we obtain, by Equation (\ref{eq:newformulahyper}), that $\mu_{a,-d}(y_0)=1$ if and only if $a=d$ and $\mu_{a,d}(y_0)=1$ if and only if $a=2p-2-d$. Moreover, $\mu_{a,-1}(y_1)=\mu_{a,1}(y_1)=1$ if and only if $a$ is odd. To determine, $\mu_{a,-d}(y_\infty)$ and $\mu_{a,d}(y_\infty)$, we use  the following congruences:
$$\begin{array}{rccll}
-t&\equiv&2p-2-t&\mod (2p-2),&\mbox{ for $0\le t\le 2p-3$},\\
p(2i)&\equiv&2i&\mod (2p-2),&\mbox{ for $0\le i\le p-2$},\\
p(2i+1)&\equiv&2i+p&\mod (2p-2),&\mbox{ for $0\le i\le \frac{p-3}{2}$},\\
p(2i+1)&\equiv&2i+2-p&\mod (2p-2),&\mbox{ for $\frac{p-1}{2}\le i\le p-2$}.
\end{array}$$
For $0\le i\le p-2$, this implies  $\mu_{a,-2i}(y_\infty)=1$ if and only if $a=2p-2-2i$, and $\mu_{a,2i}(y_\infty)=1$ if and only if $a=2i$. Similarly, for $0\le i\le \frac{p-3}{2}$,  we obtain $\mu_{a,-(2i+1)}(y_\infty)=1$ if and only if $a=p-2-2i$, and $\mu_{a,2i+1}(y_\infty)=1$ if and only if $a=2i+p$. Finally, for $\frac{p-1}{2}\le i\le p-2$, we have $\mu_{a,-(2i+1)}(y_\infty)=1$ if and only if $a=3p-4-2i$ and $\mu_{a,2i+1}(y_\infty)=1$ if and only if $a=2i+2-p$.

Suppose first that $0\le i\le p-2$. Then
$$n_{2i,p}=\left(1-\frac{2p-2-2i}{2p-2}\right) +(0-0)+\left(1-\frac{2i}{2p-2}\right)+\frac{m-\delta_m}{2} = 1+\frac{m-\delta_m}{2}$$
provided $2i\le 2p-2-m$ and $2p-2-2i\le 2m-3$, which is equivalent to $p-m+1\le i\le p-1-\frac{m+\delta_m}{2}$. For all other $i$ with $0\le i\le p-2$, we have to subtract 1, leading to $n_{2i,p}=\frac{m-\delta_m}{2}$.

Suppose next that $0\le i\le \frac{p-3}{2}$. Since we have assumed that $p>3m$, it follows that $2i+1\le 2p-2-m$. Therefore,
$$n_{2i+1,p}=\left(1-\frac{2p-2-2i-1}{2p-2}\right) +\left(\delta_m-\frac{1}{2}\right)+\left(1-\frac{2i+p}{2p-2}\right)+\frac{m-\delta_m}{2} = \frac{m+\delta_m}{2}$$
provided $p-2-2i\le 2m-3$, which is equivalent to $\frac{p+1}{2}-m\le i$. For all other $i$ with $0\le i\le \frac{p-3}{2}$, we have to subtract 1, leading to $n_{2i+1,p}=\frac{m+\delta_m}{2}-1$.

Finally, suppose that $\frac{p-1}{2}\le i\le p-2$. Since we have assumed that $p>3m$, it follows that $3p-4-2i> 2m-3$. Therefore,
$$n_{2i+1,p}=\left(1-\frac{2p-2-2i-1}{2p-2}\right) +\left(\delta_m-\frac{1}{2}\right)+\left(0-\frac{2i+2-p}{2p-2}\right)+\frac{m-\delta_m}{2} = \frac{m+\delta_m}{2}$$
provided $2i+1\le 2p-2-m$, which is equivalent to $i\le p-2-\frac{m-\delta_m}{2}$. For all other $i$ with $\frac{p-1}{2}\le i\le p-2$, we have to subtract 1, leading to $n_{2i+1,p}=\frac{m+\delta_m}{2}-1$.

We obtain the following result.

\begin{proposition}
\label{prop:hyperell}
Under Assumption $\ref{ass:hyperelliptic}$, there is an isomorphism of $kG$-modules
\begin{eqnarray*}
\HH^0(X,\Omega_X^{\otimes m}) &\cong &
U_{p-2m,\frac{p-3m+1+\delta_m}{2}} \;\oplus\; U_{2p-2m,\frac{2p-3m+2-\delta_m}{2}}\\
&&\oplus\; \bigoplus_{i\in E_1}\left(\frac{m-\delta_m}{2}+1\right)U_{2i,p} \;\oplus\;
\bigoplus_{i\in E_2}\left(\frac{m-\delta_m}{2}\right)U_{2i,p} \\
&&\oplus\; \bigoplus_{i\in O_1}\left(\frac{m+\delta_m}{2}\right)U_{2i+1,p} \;\oplus\;
\bigoplus_{i\in O_2}\left(\frac{m+\delta_m}{2}-1\right)U_{2i+1,p}
\end{eqnarray*}
where $\delta_m$ is as in Equation $(\ref{eq:deltahyperell})$ and 
$$\begin{array}{rclrcl}
E_1&=&\{p-m+1,p-m+2,\ldots, p-1-\frac{m+\delta_m}{2}\},&
E_2&=&\{0,1,\ldots,p-2\}-E_1,
\\[1ex]
O_1&=&\{\frac{p+1}{2}-m,\frac{p+1}{2}-m+1,\ldots, p-2-\frac{m-\delta_m}{2}\},&
O_2&=&\{0,1,\ldots,p-2\}-O_1.
\end{array}$$
\end{proposition}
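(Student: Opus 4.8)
The plan is to apply Theorem \ref{thm:main3} to the divisor $E = mK_X$, using the ramification data collected in \S\ref{ss:datahyperelliptic}. First I would record the input data: $I = P = \langle\sigma\rangle$ so $n_I = 1$ and $n = 1$; the only branch point of $\pi\colon X\to Y$ is $y_\infty$, with jumps $i_1(x_\infty) = 1$, $i_2(x_\infty) = 2$ is false --- rather $I_{x_\infty,i} = I$ for $i\in\{0,1,2\}$, so the single jump of the cyclic group $I_{x_\infty}$ occurs at $i = 2$, i.e. $n_{x_\infty} = 1$ and $i_1(x_\infty) = 2$. For $0\le j\le p-1$ the $p$-adic expansion of the relevant $t$ is just $t = a_{1,t}\in\{0,\dots,p-1\}$, and the unique $t$ with $t\le j < t+1$ is $t = j$. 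Plugging into Equation (\ref{eq:Dj}) with $m\sum_{i\ge 0}(\#I_{x_\infty,i}-1) = m\cdot 3(p-1)$ gives $d_{y_\infty,j} = \lfloor (3m(p-1) - 2j)/p\rfloor$, and the hypothesis $p > 3m$ lets me sort the $j\in\{0,\dots,p-1\}$ into the three arithmetic-progression blocks $A_1, A_2, A_3$ on which $D_j$ takes the constant values $(3m-1)y_\infty$, $(3m-2)y_\infty$, $(3m-3)y_\infty$ respectively; the parity correction $\delta_m$ records whether $3m$ is even or odd, which shifts where the block boundaries fall.

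Next I would assemble $E_j = mK_Y + D_j$ from the explicit canonical divisor $K_Y$ in Equation (\ref{eq:KYhyperell}) and compute, for each branch point $z_u$ of $\lambda\colon Y\to Z$ with $u\in\{0,1,\infty\}$, the residues $\ell_{y_u,j}$ modulo $c_{y_u}$ using the values of $c_{y_u}$ from Equation (\ref{eq:newformulahyper}). A short computation gives $\ell_{y_0,j} = 2p-2-m$, $\ell_{y_1,j} = \delta_m$, and $\ell_{y_\infty,j} = 2m-\ell$ for $j\in A_\ell$. Then $n_j$ follows either from Equation (\ref{eq:nj}) directly or, more conveniently, from the Riemann--Hurwitz reformulation in Remark \ref{rem:combine}(a); since $g(Z) = 0$ and the non-branch points contribute nothing, everything telescopes to the constant $n_j = (m-\delta_m)/2$, independent of $j$. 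This constancy, together with the fact that $\ell_{y_\infty,j}$ is locally constant in $j$ and changes by exactly $1$ across each block boundary, means that in the difference formula of Remark \ref{rem:computation} the sum over $d$ has at most one term, so $n(a,j) - n(a,j+1)$ is either $0$ or a single $\mu_{a,-d}(y_\infty)$. Reading off Equation (\ref{eq:nab}), the only non-projective summands come from the two block boundaries $j = \frac{p-3m-1+\delta_m}{2}$ and $j = \frac{2p-3m-\delta_m}{2}$, giving $U_{a,b}$ with $b = (j+1)p^{n-n_I} = j+1$ and $a\equiv -(2m-1)p$ resp.\ $a\equiv -(2m-2)p \bmod (2p-2)$, which reduce to $a = p-2m$ and $a = 2p-2m$.

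For the projective part I would compute $n_{a,p} = n(a,p-1)$ from Equation (\ref{eq:naj}) evaluated at $j = p-1\in A_3$, which requires summing $\sum_{d=1}^{\ell_{y_u,p-1}}\mu_{a,-d}(y_u) - \sum_{d=1}^{c_{y_u}-1}\frac{d}{c_{y_u}}\mu_{a,d}(y_u)$ over the three branch points and adding $n_{p-1} = (m-\delta_m)/2$. The terms at $y_0$ and $y_1$ are easy: $2p-3\equiv -1\bmod(2p-2)$ forces $\mu_{a,-d}(y_0) = 1 \iff a = d$ and $\mu_{a,d}(y_0) = 1\iff a = 2p-2-d$, while $\mu_{a,\pm 1}(y_1) = 1$ iff $a$ is odd. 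The term at $y_\infty$ is the bookkeeping-heavy part: because $\theta_{y_\infty} = \xi^p$ has order $2p-2$ but the ``twist by $p$'' shuffles residues, I need the four congruences for $-t$, $p(2i)$, $p(2i+1)$ in the two ranges $i\le (p-3)/2$ and $i\ge (p-1)/2$ to pin down, for each $a$, which single $i$ makes $\mu_{a,-(2i+1)}(y_\infty)$ or $\mu_{a,2i+1}(y_\infty)$ nonzero. Splitting into the cases $a = 2i$, $a = 2i+1$ with $i$ in the lower or upper half, and using $p > 3m$ repeatedly to decide which indicator sums actually reach the relevant $d$, yields $n_{a,p} = 1 + (m-\delta_m)/2$ on the index set $E_1$, $n_{a,p} = (m-\delta_m)/2$ on $E_2$, $n_{a,p} = (m+\delta_m)/2$ on $O_1$, and $n_{a,p} = (m+\delta_m)/2 - 1$ on $O_2$, with $E_1, O_1$ exactly the $i$-ranges where the two ``$\le 2m-3$'' and ``$\le 2p-2-m$'' conditions both hold. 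Collecting the non-projective and projective contributions gives the displayed decomposition. The main obstacle is purely the $y_\infty$ congruence bookkeeping --- keeping the $p$-twist, the two halves of the index range, and the $p > 3m$ inequalities straight simultaneously --- rather than any conceptual difficulty, since Theorem \ref{thm:main3} and Remark \ref{rem:computation} do all the structural work.
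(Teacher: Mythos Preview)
Your proposal is correct and follows essentially the same route as the paper: apply Corollary \ref{thm:main3cor} (i.e.\ Theorem \ref{thm:main3} with $E=mK_X$) using the ramification data from \S\ref{ss:datahyperelliptic}, split $\{0,\dots,p-1\}$ into the three blocks $A_1,A_2,A_3$ via $p>3m$, compute the constants $\ell_{y_u,j}$ and $n_j=(m-\delta_m)/2$, read off the two non-projective summands from the block boundaries via Remark \ref{rem:computation}, and then carry out the even/odd congruence bookkeeping at $y_\infty$ to evaluate $n_{a,p}=n(a,p-1)$. The paper's argument is identical in structure and detail, including the same four congruences modulo $2p-2$ you identify for handling the $p$-twist in $\theta_{y_\infty}=\xi^p$.
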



\subsection{The tangent space of the global deformation functor}
\label{ss:deformhyperelliptic}

Under Assumption \ref{ass:hyperelliptic}, we now use Proposition \ref{prop:deformation} to determine the dimension of the tangent space $t_{D_{\mathrm{gl}}}$ of the global deformation functor $D_{\mathrm{gl}}$ associated to the pair $(X,G)$. We use Proposition \ref{prop:hyperell} for $m=2$.

Since $\chi=\xi^2$, we have $\chi^j.0=2j$ for $0\le j\le p-2$ and $\chi^{p-1}.0=0$. Because $n=n_I=1$, Equation (\ref{eq:tangentdeformation}) becomes
$$\mathrm{dim}_k\; t_{D_{\mathrm{gl}}} = \sum_{j=0}^{p-2} n_{2j,j+1} \; + \; n_{0,p}.$$
By Proposition \ref{prop:hyperell}, applied to $m=2$, the coefficient $n_{0,p}$ of $U_{0,p}$ equals $\frac{m-\delta_m}{2}=1$. Moreover, the only non-zero coefficients of non-projective direct $kG$-module summands of $\HH^0(X,\Omega_X^{\otimes 2})$ are
$$n_{p-4,\frac{p-5}{2}}\quad\mbox{and}\quad n_{2p-4,p-2}.$$
Since $p-4$ is odd and since $2p-4=2(p-2)$ but $p-2\ne (p-2)+1$, we obtain the following result.

\begin{proposition}
\label{prop:deformhyperell}
Suppose Assumption $\ref{ass:hyperelliptic}$ holds, and let $P_0$ be the projective $kG$-module cover of the trivial simple $kG$-module $S_0$. The $k$-dimension of the tangent space $t_{D_{\mathrm{gl}}}$ of the global deformation functor $D_{\mathrm{gl}}$ associated to the pair $(X,G)$ is equal to the multiplicity of $P_0$ as a direct summand of $\HH^0(X,\Omega_X^{\otimes 2})$, which equals one.
\end{proposition}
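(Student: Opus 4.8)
The plan is to invoke Proposition~\ref{prop:deformation}, which already reduces $\dim_k t_{D_{\mathrm{gl}}}$ to the explicit sum $\sum_{j=0}^{p^{n_I}-1} n_{\chi^j.0,\,(j+1)p^{n-n_I}}$, and then to evaluate this sum using the module-structure description from Proposition~\ref{prop:hyperell} specialized to $m=2$. First I would record that under Assumption~\ref{ass:hyperelliptic} we have $n=n_I=1$ and $\chi=\xi^2$, so $\chi^j.0=2j$ for $0\le j\le p-2$ while $\chi^{p-1}.0=0$; thus the formula becomes $\dim_k t_{D_{\mathrm{gl}}} = \sum_{j=0}^{p-2} n_{2j,\,j+1} + n_{0,p}$.

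Next I would go through the terms. The coefficients $n_{2j,\,j+1}$ for $0\le j\le p-2$ are, except possibly for one value of $j$, coefficients of \emph{non-projective} indecomposable summands $U_{2j,\,b}$ with $b=j+1<p$; by Proposition~\ref{prop:hyperell} applied to $m=2$ (so $\delta_m=0$), the only non-projective summands are $U_{p-4,\,(p-3)/2+1/2}=U_{p-4,\,(p-5+2)/2}$, i.e.\ $U_{p-4,\,(p-5)/2}$ wait---more precisely with $\delta_2=0$ the two non-projective summands are $U_{p-4,\,(p-5)/2}$ and $U_{2p-4,\,p-2}$, each with multiplicity one. I would then check that $U_{p-4,\,(p-5)/2}$ does not contribute to the sum $\sum_{j=0}^{p-2} n_{2j,\,j+1}$: this requires $p-4$ to be even and $(p-5)/2 = j+1$ for the relevant $j$ with $2j=p-4$, i.e.\ $j=(p-4)/2$; but $p$ is odd so $p-4$ is odd, hence $U_{p-4,\,\cdot}$ is never of the form $U_{2j,\,\cdot}$ and contributes nothing. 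For $U_{2p-4,\,p-2}=U_{2(p-2),\,p-2}$, the index $2j=2p-4$ forces $j=p-2$, but then $j+1=p-1\ne p-2$, so this summand also does not appear in the sum. Every other term $n_{2j,\,j+1}$ with $j+1<p$ is the multiplicity of a non-projective indecomposable other than these two, hence zero; and the term $n_{0,p}$ is the multiplicity of the projective indecomposable $U_{0,p}=P_0$, the projective cover of $S_0$, which by Proposition~\ref{prop:hyperell} (the computation $n_{2i,p}=\frac{m-\delta_m}{2}=1$ at $i=0$, noting $0\in E_2$ since $0\notin E_1$) equals one.

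Assembling these observations gives $\dim_k t_{D_{\mathrm{gl}}} = 0 + n_{0,p} = 1$, and since $U_{0,p}=P_0$ this multiplicity is exactly the multiplicity of $P_0$ as a direct summand of $\HH^0(X,\Omega_X^{\otimes 2})$, as claimed. The main obstacle---really the only subtle point---is the bookkeeping that confirms neither of the two non-projective summands $U_{p-4,\,(p-5)/2}$ and $U_{2p-4,\,p-2}$ falls into the diagonal pattern $(a,b)=(2j,\,j+1)$ indexed by the deformation formula; this hinges on the parity of $p-4$ and on the inequality $p-1\ne p-2$, both immediate, so no genuine difficulty remains. I would present the proof as a short paragraph carrying out exactly this substitution and case-check.
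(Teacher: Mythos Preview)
Your proposal is correct and follows essentially the same approach as the paper's own proof: both apply Proposition~\ref{prop:deformation} to reduce to the sum $\sum_{j=0}^{p-2} n_{2j,j+1} + n_{0,p}$, then read off from Proposition~\ref{prop:hyperell} at $m=2$ that the two non-projective summands $U_{p-4,(p-5)/2}$ and $U_{2p-4,p-2}$ miss the diagonal pattern (by the parity of $p-4$ and by $p-1\ne p-2$), leaving only $n_{0,p}=1$. The only cosmetic issue is the garbled intermediate computation ``$U_{p-4,(p-3)/2+1/2}=U_{p-4,(p-5+2)/2}$''; in a clean write-up just substitute $m=2$, $\delta_m=0$ directly into the indices of Proposition~\ref{prop:hyperell}.
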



\section{Holomorphic poly-differentials of the modular curves $X(\Gamma_\ell)$ modulo $p$}
\label{s:modular}

In this section and the next, we consider modular curves in prime characteristic $p$. We show that Corollary \ref{thm:oldmain} has applications to congruences between prime level holomorphic cusp forms in characteristic $0$ of even weight $2m>2$ (see Theorems \ref{thm:firstmodtheorem} and \ref{thm:secondmodtheorem}). To prove these results, we give a solution to Hecke's classical problem \cite{Hecke1928,ChevalleyWeil1934} of determining the module structure of  such cusp forms when the characteristic is $p=3$ (see Theorem \ref{thm:modularresult} and Propositions \ref{prop:full_mixed} and \ref{prop:full_equal}). Together with the results in \cite{BleherChinburgKontogeorgis2020}, and using \cite{Kani1986} or \cite{Nakajima1986} for $p>3$, it follows that the only remaining open case for Hecke's problem in all characteristics is when $p = 2$.

For the remainder of this section, we will use the following assumptions and notations (see \cite[\S5]{BleherChinburgKontogeorgis2020}).

\begin{assume}
\label{ass:modular}
We assume $\ell \ge 3$ is a prime number with $\ell\ne p$. Let $F$ be a number field that is unramified over $p$ and that contains a primitive $\ell^{\mathrm{th}}$ root of unity $\zeta_\ell$. We assume $A\subset F$ is a Dedekind domain whose fraction field equals $F$ and that contains $\mathbb{Z}[\frac{1}{\ell},\zeta_\ell]$. Let $\mathcal{V}(F,p)$ be the set of places of $F$ over $p$. For each $v\in \mathcal{V}(F,p)$, let $\mathcal{O}_{F,v}$ be the ring of integers of the completion $F_v$ of $F$ at $v$ and let $\mathfrak{m}_{F,v}$ be its maximal ideal. For all $v \in \mathcal{V}(F,p)$, we assume that $A \subset \mathcal{O}_{F,v}$, and we let $k(v)$ denote the (finite) residue field of $A$ modulo the maximal ideal $A\cap\mathfrak{m}_{F,v}$. 
\end{assume}

\begin{nota}
\label{not:modular}
Suppose Assumption \ref{ass:modular} holds.
Let $\Gamma=\mathrm{SL}(2,\mathbb{Z})$, and let $\Gamma_\ell$ be the principal congruence subgroup of $\Gamma$ of level $\ell$.
Let $\overline{\Gamma}=\mathrm{SL}(2,\mathbb{F}_\ell) = \Gamma/\Gamma_\ell$, and let $G=\mathrm{PSL}(2,\mathbb{F}_\ell)=\Gamma/\langle \Gamma_\ell, \pm \,\mathrm{I}\,\rangle$ where $\mathrm{I}$ denotes the $2\times 2$ identity matrix. 
\end{nota}

Let $X(\Gamma_\ell)$ be the (compactified) modular curve associated to  $\Gamma_\ell$. It follows, for example, from \cite{Katz1973,KatzMazur1985}  that there is a proper smooth canonical model $\mathcal{X}_A(\ell)$ of $X(\Gamma_\ell)$ over $A$.  Moreover, $\HH^0(\mathcal{X}_A(\ell),\Omega_{\mathcal{X}_A(\ell)}^{\otimes m})$ is naturally identified with the $A$-lattice $\mathcal{S}_{2m}(A)$ of holomorphic  cusp forms for $\Gamma_\ell$ of weight $2m$ that have  $q$-expansion coefficients in $A$ at all the cusps, in the sense of \cite[\S1.6]{Katz1973}. 

Let $v \in \mathcal{V}(F,p)$ and let $k$ be an algebraically closed field containing $k(v)$. We define
\begin{equation}
\label{eq:reductionmodular}
X_p(\ell):=k\otimes_{k(v)}(k(v)\otimes_A\mathcal{X}_A(\ell))
\end{equation} 
to be the reduction of $\mathcal{X}_A(\ell)$ modulo $p$ over $k$ relative to $v$. 

By \cite[Thm. 1.1]{BendingCaminaGuralnick2005}, if $\ell\ge 7$ then $\mathrm{Aut}(X_p(\ell))=G$ unless $p=3$ and $\ell\in\{7,11\}$. Moreover, $\mathrm{Aut}(X_3(7))\cong \mathrm{PGU}(3,\mathbb{F}_3)$ and $\mathrm{Aut}(X_3(11))\cong M_{11}$. If $\ell<7$ then $g(X_p(\ell))=0$, whereas if $\ell\ge 7$ then $g(X_p(\ell))\ge 3$  (see, for example, \cite[Cor. 3.2]{BendingCaminaGuralnick2005} for an explicit formula). 

In \cite[\S 5.6]{Moreno1993}, it is shown that the genus of $X_p(\ell)/G$ is zero, and the lower ramification groups associated to the cover $X_p(\ell) \rightarrow X_p(\ell)/G$ are determined. If $p>3$ then the cover $X_p(\ell)\to X_p(\ell)/G$ is tamely ramified, and if $p\in\{2,3\}$ then the cover has wildly ramified points. While the Sylow $3$-subgroups of $G$ are cyclic, its Sylow $2$-subgroups are dihedral $2$-groups.


\subsection{Local results and congruences}
\label{ss:modularresults}

Let $v \in \mathcal{V}(F,p)$, let $k$ be an algebraically closed field containing $k(v)$, and let  $X_p(\ell)$ be as in Equation (\ref{eq:reductionmodular}).
We obtain the following result about the structure of the holomorphic poly-differentials of $\mathcal{X}_A(\ell)$ viewed as a module for $G$ over the local ring $\mathcal{O}_{F,v}$.

\begin{theorem} 
\label{thm:firstmodtheorem}
Suppose Assumption $\ref{ass:modular}$ holds, and let $p\ge 3$ and $m>1$.
The $\mathcal{O}_{F,v}G$-module
$$\mathcal{O}_{F,v} \otimes_A \HH^0(\mathcal{X}_A(\ell),\Omega^{\otimes m}_{\mathcal{X}_A(\ell)}) = \mathcal{O}_{F,v}\otimes_A \mathcal{S}_{2m}(A)$$
is a direct sum over blocks $B$ of $\mathcal{O}_{F,v}G$ of modules of the form $P_B \oplus U_B$ in which $P_B$ is a projective $B$-module and $U_B$ is either the zero module or a single indecomposable non-projective $B$-module.  If $p>3$ then $U_B=\{0\}$. For all $p\ge 3$, $P_B$ is uniquely determined by the lower ramification groups and the fundamental characters of the closed points of $X_p(\ell)$ that are ramified in the cover $X_p(\ell)\to X_p(\ell)/G$. Moreover, if $p=3$ then also the reduction $\overline{U}_B$ of $U_B$ modulo $\mathfrak{m}_{F,v}$ is uniquely determined this way.
\end{theorem}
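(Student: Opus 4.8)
The plan is to deduce Theorem~\ref{thm:firstmodtheorem} from the more precise algorithmic result Theorem~\ref{thm:main3} (and its consequences, including Corollary~\ref{thm:main3cor}) together with the integral and modular representation theory of the group $G = \mathrm{PSL}(2,\mathbb{F}_\ell)$, whose Sylow $p$-subgroups are cyclic when $p = 3$ (of order $3$, since $3 \mid \#G$ but $9 \nmid \#G$ for $\ell \ge 7$) and whose $p$-part of the order is trivial-to-linear when $p > 3$ in the relevant sense --- more precisely, for $p > 3$ the cover $X_p(\ell) \to X_p(\ell)/G$ is tamely ramified but $p$ may still divide $\#G$, so one invokes that $kG$-projectivity of $\HH^0$ follows from Nakajima's criterion for tame covers with the relevant Euler characteristic vanishing.

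\emph{Step 1: the characteristic-zero picture and lattice structure.} First I would recall that $\mathcal{O}_{F,v} \otimes_A \mathcal{S}_{2m}(A) = \HH^0(\mathcal{X}_A(\ell), \Omega^{\otimes m})$ is an $\mathcal{O}_{F,v}G$-lattice, and that since $F$ is unramified over $p$ and $\mathcal{O}_{F,v}$ is a complete discrete valuation ring with residue field $k(v)$, the block decomposition of $\mathcal{O}_{F,v}G$ lifts that of $k(v)G$, so the lattice decomposes as $\bigoplus_B M_B$ over blocks $B$. It remains to identify each $M_B$ as $P_B \oplus U_B$ with $P_B$ projective and $U_B$ zero or indecomposable non-projective.

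\emph{Step 2: the case $p > 3$.} Here the cover $X_p(\ell) \to X_p(\ell)/G$ is tame, so by \cite[\S3]{Nakajima1986} (or \cite[Thm.~4.5]{Kock2004}) applied with $E = mK_{X_p(\ell)}$ and using $\HH^1(X_p(\ell), \Omega^{\otimes m}) = 0$ (which holds since $\mathrm{deg}(mK) = 2m(g-1) > 2g - 2$ for $m > 1$, $g \ge 2$), the reduction $\overline{M}_B$ is a projective $k(v)G$-module. A standard lifting argument (projective $k(v)G$-modules lift uniquely to projective $\mathcal{O}_{F,v}G$-lattices, and an $\mathcal{O}_{F,v}G$-lattice whose reduction is projective is itself projective) then gives $M_B = P_B$ projective, i.e. $U_B = \{0\}$. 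That $P_B$ is determined by the ramification data is exactly the content of the Nakajima/Köck formula: the Brauer character of the projective module is an explicit expression in the inertia groups and fundamental characters of the ramified points --- this is the $n_I = 0$, $I$ trivial, specialization of Theorem~\ref{thm:main3}.

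\emph{Step 3: the case $p = 3$.} Now the Sylow $3$-subgroup of $G$ is cyclic of order $3$, so Assumption~\ref{ass:main3}'s hypotheses are met after the Conlon/$p$-hypo-elementary reductions, and Corollary~\ref{thm:main3cor} (via Theorem~\ref{thm:main3} applied to $E = mK_{X_3(\ell)}$) gives the precise $kG$-module structure of $\overline{M} = \HH^0(X_3(\ell), \Omega^{\otimes m})$ for algebraically closed $k \supseteq k(v)$. By the theory of blocks with cyclic defect group, a block $B$ of $kG$ with defect group of order $3$ has a Brauer tree, and the indecomposable $kG$-modules in $B$ that can appear as summands of $\HH^0$ are the uniserial modules $U_{a,b}$; a block with cyclic defect group $3$ has exactly one non-projective uniserial module up to the relevant equivalence in each "position." The structural claim is that within each block at most one non-projective indecomposable summand occurs: I would argue this by tracking, via Remark~\ref{rem:computation} and the explicit $n_{a,b}$ of Theorem~\ref{thm:main3}, that the non-projective summands (those with $b \not\equiv 0 \bmod p^{n-n_I} \cdot p^{n_I} = p^n$, i.e. $b = (j+1)p^{n-n_I}$ with $j < p^{n_I} - 1$) have $b$-values constrained by the ramification of $X_3(\ell) \to X_3(\ell)/G$, which from \cite[\S5.6]{Moreno1993} involves only a bounded number of wildly ramified points with small jumps, forcing at most one $j < p^{n_I}-1 = 0$... more carefully, since $n_I = 1$ here, the only non-projective summands have $b = p^{n - n_I}(j+1)$ with $j = 0$, i.e. a single shape per block-socle, and the multiplicity computation shows it is $0$ or $1$. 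The uniqueness of $\overline{U}_B$ given the ramification data is then immediate from the formulas of Theorem~\ref{thm:main3}, which depend only on $g(Z) = g(X_3(\ell)/G) = 0$, the $c_{y(z)}$, the $\varphi(y(z))$, and the jumps $i_\ell(x)$ --- all of which are ramification data. Finally, $U_B$ itself (the $\mathcal{O}_{F,v}$-lattice) lifts $\overline{U}_B$ uniquely because in a block with cyclic defect group the non-projective indecomposable lattices are in bijection with their reductions (the block is of finite lattice type and the reduction map is injective on indecomposables of a cyclic-defect block), so $U_B$ is also pinned down by the ramification data.

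\emph{The main obstacle} I expect is Step~3's structural claim that each block contributes \emph{at most one} non-projective summand, together with the bookkeeping that translates the $n_{a,b}$ of Theorem~\ref{thm:main3} across the Conlon-induction reduction back to the actual group $G = \mathrm{PSL}(2,\mathbb{F}_\ell)$ (which is \emph{not} $p$-hypo-elementary). One must check that the Conlon induction formula, applied to $\HH^0(X_3(\ell), \Omega^{\otimes m})$, produces on each $p$-hypo-elementary subgroup a module whose non-projective part is controlled, and that upon inducing back up the non-projective contributions land in a single Brauer-tree position per block; this uses that $3 \| \#G$ so defect groups are cyclic of order $3$ and the relevant source algebras are Brauer-tree algebras. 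The cleanest route is probably to invoke \cite[Thm.~1.4]{BleherChinburgKontogeorgis2020} for the $m=1$ analogue as a template --- the same block-theoretic input (cyclic defect, finite lattice type, injectivity of reduction on non-projective indecomposables) applies verbatim --- and then note that replacing $\Omega_X$ by $\Omega_X^{\otimes m}$ only changes which single $U_{a,b}$ appears, not the one-per-block structure, since the "extra" projective summands absorb all the multiplicity growth with $m$.
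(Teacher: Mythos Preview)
Your Step~2 (the $p>3$ case) is essentially the paper's argument: Nakajima/K\"ock gives projectivity of the reduction, which lifts. Good.

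Step~3 contains genuine gaps. First, a factual error: the Sylow $3$-subgroup of $G=\mathrm{PSL}(2,\mathbb{F}_\ell)$ has order $3^n$ where $3^n \,\|\, (\ell-\epsilon)$, and $n$ can be arbitrarily large (e.g.\ $\ell=19$ gives $n=2$). Your claim ``$9 \nmid \#G$'' is false, and the defect groups are cyclic of order $3^n$, not $3$. Relatedly, your arithmetic ``$p^{n_I}-1 = 0$'' is wrong: with $p=3$ and $n_I=1$ one has $p^{n_I}-1=2$, so Theorem~\ref{thm:main3} gives \emph{two} possible non-projective shapes per hypo-elementary subgroup ($j=0$ and $j=1$), not one.

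The deeper problem is the ``at most one non-projective indecomposable per block'' claim. This is the heart of the theorem, and your argument for it does not work: Conlon induction only determines the class in the Green ring, and knowing the restrictions to $p$-hypo-elementary subgroups does not by itself force a one-per-block structure for $G$. The paper does \emph{not} argue abstractly here. Instead it proves the explicit structure theorem (Theorem~\ref{thm:modularresult}) by a substantial computation in \S\ref{s:modular3}: it restricts to a specific short list of subgroups $V,\Delta_1,\Delta_2$ of $N_1=N_G(P_1)$, applies Theorem~\ref{thm:main3} to each, assembles the stable $kN_1$-structure, and then uses Green correspondence (via the stable equivalence between $kG$-mod and $kN_1$-mod) plus Brauer character computations to pin down the $kG$-module. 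The one-per-block statement is then read off case by case in \S\ref{sss:Green_mixed1}--\S\ref{sss:Green_equal4}; it depends on the congruence class of $m$ bmod $6$ and is not a formal consequence of cyclic defect alone.

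Finally, you overclaim at the end: the theorem only asserts that $\overline{U}_B$ is determined by ramification data, not $U_B$ itself. Your assertion that non-projective indecomposable lattices in a cyclic-defect block are in bijection with their reductions is not correct in general, and in any case is not needed for what is being proved.
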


Theorem \ref{thm:firstmodtheorem} extends \cite[Thm. 1.2]{BleherChinburgKontogeorgis2020} from $m=1$ to arbitrary $m>1$. Since $X_p(\ell)/G\cong\mathbb{P}^1_k$, we do not need to refer to the class of canonical divisors on $X_p(\ell)/G$. 

Similarly to \cite{BleherChinburgKontogeorgis2020}, we can use the approach in \cite{Ribet1984} to define congruences modulo $p$ between modular forms. 
Namely, let $\mathbb{T}$ be a ring of Hecke operators acting on $\mathcal{S}_{2m}(F):= F \otimes_A \mathcal{S}_{2m}(A)$. By  \cite[\S6.1-6.2]{Shimura1971}, together with flat base change, it follows that $\mathcal{S}_{2m}(F)$ coincides with the space of all weight $2m$ cusp forms for $\Gamma_\ell$ whose Fourier expansions with respect to $e^{2\pi iz/\ell}$ have coefficients in $F$. Suppose there exists a decomposition
\begin{equation}
\label{eq:decomp}
\mathcal{S}_{2m}(F) = E_1 \oplus E_2
\end{equation}
into a direct sum of $\mathbb{T}$-stable $F$-subspaces. Let $\mathfrak{a}$ be an ideal of $A$.  Following \cite{Ribet1984}, we define a non-trivial congruence modulo $\mathfrak{a}$ linking $E_1$ and $E_2$ to be a pair $(f_1,f_2)$ of forms $f_i \in \mathcal{S}_{2m}(A) \cap E_i$, for $i\in\{1,2\}$, such that 
\begin{equation}
\label{eq:congruence}
f_1 \equiv f_2 \mod \mathfrak{a} \cdot \mathcal{S}_{2m}(A) \quad \mathrm{but} \quad f_1 \not \in \mathfrak{a}\cdot \mathcal{S}_{2m}(A).
\end{equation}
Congruences of this kind have played an important role in the development of the theory of modular forms, Galois representations and arithmetic geometry (see, for example \cite{Diamond1997,DieulefaitUrrozRibet2015}).

We call a $\mathbb{T}$-stable decomposition into $F$-subspaces of the form given in Equation (\ref{eq:decomp}) $G$-isotypic if there exist two orthogonal central idempotents of $FG$ such that $1=e_1+e_2$ in $FG$ and 
\begin{equation}
\label{eq:centralidem}
E_i = e_i  \mathcal{S}_{2m}(F) \quad \mbox{for} \quad i\in\{1,2\}.
\end{equation} 

The following theorem extends \cite[Thm. 1.3]{BleherChinburgKontogeorgis2020} from $m = 1$ to arbitrary $m > 1$.

\begin{theorem}
\label{thm:secondmodtheorem}
With the assumptions of Theorem $\ref{thm:firstmodtheorem}$, suppose further that $F$ contains a root of unity of order equal to the prime-to-$p$ part of the order of $G$.  Let $\mathfrak{a}$ be the maximal ideal over $p$ in $A$ associated to $v \in \mathcal{V}(F,p)$.  A $\mathbb{T}$-stable decomposition of $\mathcal{S}_{2m}(F)$, as given in Equation $(\ref{eq:decomp})$, that is $G$-isotypic, in the sense of Equation $(\ref{eq:centralidem})$, results in non-trivial congruences modulo $\mathfrak{a}$, as given in Equation $(\ref{eq:congruence})$, if and only if the following is true. There is a block $B$ of $\mathcal{O}_{F,v}G$ such that when $P_B$ and $U_B$ are as in Theorem $\ref{thm:firstmodtheorem}$, $M_B = P_B\oplus U_B$ is not equal to the direct sum $(M_B \cap e_1 M_B) \oplus (M_B \cap e_2 M_B)$.  For a given $B$, there will be orthogonal idempotents $e_1$ and $e_2$ for which this is true if and only if $B$ has non-trivial defect groups, and either $P_B \ne \{0\}$ or $F_v \otimes_{\mathcal{O}_{F,v}} U_B$ has two non-isomorphic irreducible constituents.
\end{theorem}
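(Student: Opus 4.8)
The plan is to translate the existence of non-trivial congruences into an indecomposability/decomposability statement about the $\mathcal{O}_{F,v}G$-lattice $L := \mathcal{O}_{F,v}\otimes_A \mathcal{S}_{2m}(A)$ with respect to the idempotent splitting $1 = e_1+e_2$, and then to read that statement off block by block using the explicit description $L = \bigoplus_B M_B$ with $M_B = P_B\oplus U_B$ supplied by Theorem \ref{thm:firstmodtheorem}. First I would set up the dictionary: since $\mathfrak{a}$ is the maximal ideal of $A$ over $p$ attached to $v$, and $\mathcal{O}_{F,v}$ is a faithfully flat $A$-algebra with maximal ideal $\mathfrak{m}_{F,v}$ lying over $\mathfrak{a}$, a pair $(f_1,f_2)$ with $f_i \in \mathcal{S}_{2m}(A)\cap E_i$ and $f_1\equiv f_2 \bmod \mathfrak{a}\,\mathcal{S}_{2m}(A)$ but $f_1\notin \mathfrak{a}\,\mathcal{S}_{2m}(A)$ exists if and only if the reduction map $L/\mathfrak{m}_{F,v}L \to (e_1 L/\mathfrak{m}_{F,v}(e_1L)) \oplus (e_2 L/\mathfrak{m}_{F,v}(e_2 L))$ fails to be injective, equivalently if and only if $L \ne (L\cap e_1 L)\oplus (L\cap e_2 L)$; here one uses that $e_1,e_2$ are central idempotents of $FG$ but need not lie in $\mathcal{O}_{F,v}G$, so $L\cap e_i L$ can be strictly larger in corank than $e_i L$ intersected naively. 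This is essentially the argument of \cite{Ribet1984} as already adapted in \cite{BleherChinburgKontogeorgis2020}; I would cite that adaptation and only indicate the changes needed for weight $2m>2$, which are formal because Theorem \ref{thm:firstmodtheorem} gives the same shape of module.

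Next, because $e_1,e_2$ are central idempotents of $FG$, each $e_i$ is a sum of block idempotents of $FG$, and reduction mod $p$ sends a block idempotent of $FG$ into a sum of block idempotents of $\mathcal{O}_{F,v}G$ (blocks of $FG$ refine to blocks of $\mathcal{O}_{F,v}G$ under the Brauer correspondence / decomposition of the center). Consequently the decomposition $L = \bigoplus_B M_B$ over blocks $B$ of $\mathcal{O}_{F,v}G$ is respected by the splitting in the sense that a failure of $L = (L\cap e_1L)\oplus(L\cap e_2L)$ must occur within a single block summand $M_B$: one has $L\cap e_iL = \bigoplus_B (M_B\cap e_i M_B)$ because different $\mathcal{O}_{F,v}G$-blocks are separated by idempotents already in $\mathcal{O}_{F,v}G$. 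Hence non-trivial congruences exist if and only if there is a block $B$ with $M_B \ne (M_B\cap e_1 M_B)\oplus (M_B\cap e_2 M_B)$, which is the first assertion.

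For the final "if and only if," fix a block $B$ and analyze when one can choose $e_1,e_2$ (equivalently, a partition of the central primitive idempotents of $F_v\otimes B$) witnessing the failure. If $B$ has trivial defect group then $B$ is a matrix algebra over $\mathcal{O}_{F,v}$ (a maximal order, since $F$ contains enough roots of unity to split $G$), every $B$-lattice is projective hence a direct sum of copies of the unique indecomposable projective, and any idempotent splitting of $F_v\otimes M_B$ is realized over $\mathcal{O}_{F,v}$ by Morita equivalence with $\mathcal{O}_{F,v}$; so no failure occurs — this forces $B$ to have non-trivial defect groups. Conversely, suppose $B$ has non-trivial defect groups; here the defect groups are cyclic of order $p$ for $p=3$ (Sylow $p$-subgroups of $\mathrm{PSL}(2,\mathbb{F}_\ell)$, or of $\mathrm{PGU}(3,\mathbb{F}_3)$, $M_{11}$ for the exceptional $\ell$, are cyclic when $p=3$, as noted in \S\ref{s:modular}) and there is nothing to prove for $p>3$ beyond $U_B=\{0\}$ plus $P_B\ne\{0\}$. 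In the cyclic-defect case I would invoke the classification of lattices over blocks with cyclic defect (Green, via the Brauer tree / string combinatorics; see the structure recalled after Assumption \ref{ass:main3} for the module side): $P_B$ is a direct sum of projective indecomposables, and $F_v\otimes_{\mathcal{O}_{F,v}}U_B$ has at most two irreducible constituents, corresponding to the two edges of the Brauer tree adjacent to a vertex, or to one edge. If $P_B\ne\{0\}$, pick a projective indecomposable summand $Q$ of $P_B$; its character has two irreducible $F_v$-constituents lying on different central idempotents, and separating those idempotents splits $F_v\otimes Q$ but no $\mathcal{O}_{F,v}$-pure submodule of $Q$ realizes that split since $Q$ is indecomposable — giving the failure. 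If $P_B=\{0\}$ but $F_v\otimes U_B$ has two non-isomorphic irreducible constituents, the same argument applies to $U_B$ in place of $Q$. Finally, if $P_B=\{0\}$ and $F_v\otimes U_B$ is irreducible (one-edge case), then $F_v\otimes M_B$ is irreducible and no non-trivial idempotent partition exists, so $B$ does not contribute — this is exactly the excluded case. Assembling: a witnessing $(e_1,e_2)$ for a given $B$ exists iff $B$ has non-trivial defect groups and ($P_B\ne\{0\}$ or $F_v\otimes U_B$ has two non-isomorphic constituents), completing the proof.

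The main obstacle I expect is the second half of the last step: one must be careful that "failure of splitting over $\mathcal{O}_{F,v}$" for an indecomposable lattice whose rational span splits is genuinely controlled by the constituent count, and that in the cyclic-defect situation every indecomposable lattice is either projective or "string-like" with rational span having $\le 2$ constituents. This is standard (Green's theory of lattices over cyclic blocks, and the Heller-translate / tree-path description), but it is where the representation-theoretic input is really used; the rest is the formal Ribet-style dictionary and block bookkeeping already present in \cite{BleherChinburgKontogeorgis2020}, which only needs the cosmetic replacement of weight $2$ by weight $2m$.
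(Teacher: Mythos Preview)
Your proposal is correct and follows essentially the same approach as the paper, which simply defers to the proof of \cite[Thm.~1.3]{BleherChinburgKontogeorgis2020} with Theorem~\ref{thm:firstmodtheorem} (and, for $p=3$, Theorem~\ref{thm:modularresult}) substituted for the corresponding weight-$2$ inputs. A few harmless imprecisions are worth flagging: the defect groups for $p=3$ have order $3^n$ rather than $3$; the parenthetical about $\mathrm{PGU}(3,\mathbb{F}_3)$ and $M_{11}$ is irrelevant since $G=\mathrm{PSL}(2,\mathbb{F}_\ell)$ throughout regardless of $\mathrm{Aut}(X_p(\ell))$; and your side claim that $F_v\otimes_{\mathcal{O}_{F,v}} U_B$ has at most two irreducible constituents is neither needed nor obviously true for arbitrary indecomposable lattices in a cyclic block---the argument only uses the isotypic/non-isotypic dichotomy, which already suffices.
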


We will prove Theorems \ref{thm:firstmodtheorem} and \ref{thm:secondmodtheorem} for $p>3$ in \S\ref{ss:proofgreater3}. The case when $p=3$ will be discussed in \S\ref{ss:proof3} and \S\ref{s:modular3}, using Corollaries \ref{thm:oldmain} and \ref{thm:main3cor}, together with Theorem \ref{thm:main3} applied to $E=mK_X$.

\subsection{Isotypic Hecke stable decompositions of even weight cusp forms}
\label{ss:Hecke}

In this subsection, we extend the results of \cite[\S7]{BleherChinburgKontogeorgis2020} to  construct non-trivial $G$-isotypic $\mathbb{T}$-stable decompositions of the space of cusp forms of even weight $2m>2$ when $\mathbb{T}$ is the ring of Hecke operators that have index prime to the level $\ell$. 

Let
$\Delta'_\ell$ be the set of all matrices $\alpha\in\mathrm{Mat}(2,\mathbb{Z})$ such that $\mathrm{det}(\alpha)>0$ and $\alpha \equiv \left(\begin{smallmatrix}1&0\\0&x\end{smallmatrix}\right)$ mod $\ell$ for some $x\in(\mathbb{Z}/\ell)^*$. Let
$R(\Gamma_\ell,\Delta_\ell')$ be the free $\mathbb{Z}$-module generated by the double cosets $\Gamma_\ell \alpha  \Gamma_\ell$ for $\alpha\in\Delta'_\ell$. By \cite[\S3.1]{Shimura1971}, $R(\Gamma_\ell,\Delta_\ell')$ is a ring. For each positive integer $n$ that is relatively prime to $\ell$, let $\rho'_\ell(n)$ be a set of representatives $\alpha\in\Delta'_\ell$ of all distinct double cosets in $\Gamma_\ell\backslash\Delta'_\ell/\Gamma_\ell$ such that $\mathrm{det}(\alpha)=n$, and define $T'(n):=\sum_{\alpha\in\rho'_\ell(n)} \,\Gamma_\ell \alpha \Gamma_\ell$. By \cite[Thm. 3.34]{Shimura1971}, $\mathbb{T}:= R(\Gamma_\ell,\Delta_\ell')\otimes_{\mathbb{Z}}\mathbb{Q}$ is the $\mathbb{Q}$-algebra generated by all $T'(n)$ when $n$ ranges over all positive integers that are relatively prime to $\ell$. 

Suppose $m>1$,
and
let $f \in \mathcal{S}_{2m}(F)$. For a matrix $\gamma=\left(\begin{smallmatrix} a&b\\c&d\end{smallmatrix}\right)\in\mathrm{GL}(2,\mathbb{Q})$ and $z$ in the complex upper half plane, we define
\begin{equation}
\label{eq:action}
(f|\gamma )(z) := \mathrm{det}(\gamma)^m\,(cz+d)^{-2m} \,f\left(\frac{az+b}{cz+d}\right).
\end{equation}
This leads to well-defined right actions by $\overline{\Gamma}=\mathrm{SL}(2,\mathbb{F}_\ell)$ and $G=\mathrm{PSL}(2,\mathbb{F}_\ell)$
on $\mathcal{S}_{2m}(F)$,
which can be made into left actions by defining the left action of a group element to be the right action of its inverse.
Moreover, Equation (\ref{eq:action}) leads to the following right action of $R(\Gamma_\ell,\Delta'_\ell)$, and hence of $\mathbb{T}$, on $\mathcal{S}_{2m}(F)$. For $\alpha \in \Delta_\ell'$, write $\Gamma_\ell \alpha \Gamma_\ell = \bigcup_i \Gamma_\ell \alpha_i$ as a finite disjoint union of right cosets, and define
$$f\big|\,\Gamma_\ell \alpha \Gamma_\ell := \mathrm{det}(\alpha)^{m-1}\cdot \textstyle \sum_i f|\alpha_i.$$
With these definitions, we can use similar arguments as in \cite[\S7]{BleherChinburgKontogeorgis2020} to obtain the following result.

\begin{proposition} 
\label{prop:Heckeresult} 
Let $m>1$, and suppose $e_1, e_2$ are orthogonal central idempotents of $FG$ such that $1 = e_1 + e_2$ and each $e_i$ is fixed by the conjugation action of $\mathrm{PGL}(2,\mathbb{F}_\ell)$ on $G$.  Then setting $E_i = \mathcal{S}_{2m}(F) e_i$ for $i \in\{1,2\}$ gives a $G$-isotypic $\mathbb{T}$-stable decomposition of $\mathcal{S}_{2m}(F)$, as defined in Equations $(\ref{eq:decomp})$ and $(\ref{eq:centralidem})$.
\end{proposition}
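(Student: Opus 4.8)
The strategy is to reduce the statement to two independent verifications: (i) that the decomposition $E_i = \mathcal{S}_{2m}(F)e_i$ is stable under the Hecke algebra $\mathbb{T}$, and (ii) that it is $G$-isotypic in the sense of Equation (\ref{eq:centralidem}). Part (ii) is essentially immediate: by hypothesis $e_1,e_2$ are orthogonal central idempotents of $FG$ with $1 = e_1+e_2$, and the $G$-action on $\mathcal{S}_{2m}(F)$ coming from Equation (\ref{eq:action}) makes $\mathcal{S}_{2m}(F)$ an $FG$-module, so $E_i = e_i\mathcal{S}_{2m}(F) = \mathcal{S}_{2m}(F)e_i$ are $FG$-submodules (using that the $e_i$ are central) and the decomposition $\mathcal{S}_{2m}(F) = E_1 \oplus E_2$ is a direct sum decomposition of $FG$-modules. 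This is exactly the defining condition for a $G$-isotypic decomposition.

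The substance is part (i): showing that each $E_i$ is $\mathbb{T}$-stable, equivalently that the right action of $\mathbb{T}$ on $\mathcal{S}_{2m}(F)$ commutes with the idempotents $e_i$. The plan is to follow the argument of \cite[\S7]{BleherChinburgKontogeorgis2020} verbatim, making the only necessary change, namely the appearance of the weight-dependent normalizing factor $\det(\gamma)^m$ in Equation (\ref{eq:action}) and $\det(\alpha)^{m-1}$ in the definition of $f\big|\,\Gamma_\ell\alpha\Gamma_\ell$, in place of the $m=1$ factors. First I would recall that $\mathbb{T}$ is generated by the operators $T'(n)$ for $n$ prime to $\ell$, so it suffices to check that each $T'(n)$ commutes with each $g\in G$ acting on $\mathcal{S}_{2m}(F)$; then since $e_i \in FG$, commuting with all of $G$ forces commuting with $e_i$. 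Concretely, for $g \in G = \mathrm{PSL}(2,\mathbb{F}_\ell)$ lift $g$ to $\gamma \in \mathrm{SL}(2,\mathbb{Z})$; one must show $(f\big|\Gamma_\ell\alpha\Gamma_\ell)\big|\gamma$ and $(f\big|\gamma)\big|\Gamma_\ell\alpha'\Gamma_\ell$ agree for a suitable rearrangement. The key combinatorial input, exactly as in the $m=1$ case, is that conjugation by $\mathrm{SL}(2,\mathbb{Z})$ (indeed by $\mathrm{PGL}(2,\mathbb{F}_\ell)$, via its action on $\Gamma/\Gamma_\ell$) permutes the double cosets $\Gamma_\ell\alpha\Gamma_\ell$ of fixed determinant $n$, since conjugation preserves both the determinant and the congruence condition mod $\ell$ defining $\Delta'_\ell$ up to the $\mathrm{GL}(2,\mathbb{F}_\ell)$-action; the hypothesis that $e_1,e_2$ are fixed by the conjugation action of $\mathrm{PGL}(2,\mathbb{F}_\ell)$ is what guarantees the resulting permutation leaves each $E_i$ invariant. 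The weight factors $\det(\alpha)^{m-1}$ and $\det(\gamma)^m = 1$ (as $\gamma\in\mathrm{SL}(2,\mathbb{Z})$) are multiplicative and depend only on determinants, hence are unaffected by the rearrangement, so the $m>1$ computation runs identically to the $m=1$ one.

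The main obstacle — though a purely bookkeeping one — is verifying that the cocycle-type identities underlying the slash action in Equation (\ref{eq:action}) still hold with the weight-$2m$ automorphy factor $(cz+d)^{-2m}$ and the normalization $\det(\gamma)^m$, so that $f\big|(\alpha_i\gamma) = (f\big|\alpha_i)\big|\gamma$ and the double-coset operator is well-defined independently of the choice of right coset representatives $\alpha_i$. This is standard (see \cite[\S3.1, \S3.4]{Shimura1971}), and one checks that the factor $\det(\alpha)^{m-1}$ in the definition of $f\big|\Gamma_\ell\alpha\Gamma_\ell$ is precisely the normalization that makes $T'(n)$ act compatibly across determinant strata. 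Once this is in place, the argument of \cite[\S7]{BleherChinburgKontogeorgis2020} applies with no further modification, and combining it with the observation in the first paragraph yields the proposition.
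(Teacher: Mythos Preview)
Your overall strategy is right and matches the paper's: the paper simply says that the argument of \cite[\S7]{BleherChinburgKontogeorgis2020} carries over, and your observation that the weight-dependent factors $\det(\gamma)^m$ and $\det(\alpha)^{m-1}$ depend only on determinants (hence are unaffected by the coset rearrangements) is exactly the point.

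However, one step is misstated in a way that matters. You write that ``it suffices to check that each $T'(n)$ commutes with each $g\in G$ acting on $\mathcal{S}_{2m}(F)$; then since $e_i \in FG$, commuting with all of $G$ forces commuting with $e_i$.'' This is false: the Hecke operators $T'(n)$ do \emph{not} commute with the $G$-action in general, and this is precisely why the proposition requires the $e_i$ to be invariant under the full $\mathrm{PGL}(2,\mathbb{F}_\ell)$-conjugation rather than merely central in $FG$. Concretely, if $\gamma \in \Gamma$ lifts $g\in G$ and $\alpha \in \Delta'_\ell$ has determinant $n$, then $\gamma\alpha\gamma^{-1}$ typically lies outside $\Delta'_\ell$, since its reduction mod $\ell$ is $\bar\gamma\left(\begin{smallmatrix}1&0\\0&n\end{smallmatrix}\right)\bar\gamma^{-1}$, which need not be upper triangular. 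The correct intertwining relation is: writing $\gamma\alpha = \alpha'\gamma'$ with $\alpha'\in\Delta'_\ell$ and $\gamma'\in\Gamma$, one finds $\bar{\gamma'} = \left(\begin{smallmatrix}1&0\\0&n\end{smallmatrix}\right)^{-1}\bar\gamma\left(\begin{smallmatrix}1&0\\0&n\end{smallmatrix}\right)$, so the right action of $g$ followed by $T'(n)$ equals $T'(n)$ followed by the right action of $c_n(g)$, where $c_n$ is conjugation by the image of $\left(\begin{smallmatrix}1&0\\0&n\end{smallmatrix}\right)$ in $\mathrm{PGL}(2,\mathbb{F}_\ell)$. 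Applying this to $e_i = \sum_g c_g\, g$ gives $(f e_i)\big|T'(n) = (f\big|T'(n))\cdot c_n(e_i)$, and only the $\mathrm{PGL}$-invariance hypothesis ensures $c_n(e_i)=e_i$. Your later remarks about the $\mathrm{PGL}$-conjugation show you sensed this, but the initial reduction as written would not go through; you should replace it with the intertwining formulation above.
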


\subsection{Proofs of Theorems $\ref{thm:firstmodtheorem}$ and $\ref{thm:secondmodtheorem}$ when $p>3$}
\label{ss:proofgreater3}

These proofs follow the same main steps as the proofs of \cite[Thms. 1.2 and 1.3]{BleherChinburgKontogeorgis2020}, where we replace \cite[Lemma 5.2]{BleherChinburgKontogeorgis2020} by the following result.

\begin{lemma}
\label{lem:tamemodular}
Suppose $m>1$, $p>3$ and $p\neq \ell\ge 7$. Let $v \in \mathcal{V}(F,p)$, let $k$ be an algebraically closed field containing $k(v)$, and let  $X:=X_p(\ell)$ be as in Equation $(\ref{eq:reductionmodular})$.
\begin{itemize}
\item[(i)] The $kG$-module $\HH^0(X,\Omega_X^{\otimes m})$ is projective.
\item[(ii)] Let $k_1$ be a perfect field containing $k(v)$, and let $k$ be an algebraic closure of $k_1$. Define $X_1:=k_1\otimes_{k(v)}(k(v)\otimes_A\mathcal{X}_A(\ell))$. The $k_1G$-module $\HH^0(X_1,\Omega_{X_1}^{\otimes m})$ is projective.
\end{itemize}
The $kG$-module structure of $\HH^0(X,\Omega_X^{\otimes m})$ as in $(i)$ and the $k_1G$-module structure of  $\HH^0(X_1,\Omega_{X_1}^{\otimes m})$ as in $(ii)$ are both determined by the lower ramification groups and the fundamental characters associated to the cover $X\to X/G$.
\end{lemma}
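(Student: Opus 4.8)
The plan is to establish Lemma~\ref{lem:tamemodular} by invoking Corollary~\ref{thm:main3cor} (equivalently, Theorem~\ref{thm:main3} applied to $E = mK_X$) in the case $n_I = 0$, i.e.\ when the cover $X \to X/G$ is tamely ramified, and then to descend from the algebraically closed field $k$ to the perfect field $k_1$. First I would recall from \S\ref{s:modular} that under the hypothesis $p > 3$ and $p \neq \ell \ge 7$, the results in \cite[\S5.6]{Moreno1993} show that $X/G \cong \mathbb{P}^1_k$ has genus zero and that every closed point of $X$ ramified in $X \to X/G$ has inertia group of order prime to $p$; in particular the group $I$ from Notation~\ref{not:main3}(a) is trivial, so $n_I = 0$ and $p^{n_I} = 1$. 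Since $G = \mathrm{PSL}(2,\mathbb{F}_\ell)$ has cyclic Sylow $p$-subgroups (as $p \| \#G$ for $p > 3$ dividing $|G|$; if $p \nmid \#G$ the statement is trivial since then $kG$ is semisimple), one may still run the two reduction steps (I) and (II) from the introduction, or simply work directly with the $p$-hypo-elementary quotients arising in Conlon induction; either way Theorem~\ref{thm:main3} applies.

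With $n_I = 0$ there is only the single index $j = 0$, and the divisor $E_0$ from Equation~(\ref{eq:Ej}) is just $E_0 = mK_Y = mK_Z$ pushed up appropriately, with $Y = X$ and $\lambda = \gamma$. The key point is that $\mathrm{deg}(mK_X) = m(2g(X)-2) > 2g(X) - 2$ because $m > 1$ and $g(X) \ge 3$, so $\HH^1(X, \mathcal{O}_X(mK_X)) = 0$; then the argument in the proof of Theorem~\ref{thm:main3}, specialized to the tame case, shows that $\HH^0(X, \Omega_X^{\otimes m}) = \HH^0(X, \mathcal{O}_X(mK_X))$ is a \emph{projective} $k\overline{G} = kG$-module, with Brauer character given by Equation~(\ref{eq:betaj}) in terms of the induced characters $\mathrm{Ind}_{G_{y(z)}}^G \theta_{y(z)}^{\pm d}$ over the branch points $z \in Z_{\mathrm{br}}$ and the Euler characteristic term $n_0$. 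This is precisely the classical projectivity result of Nakajima \cite[Thm.~2]{Nakajima1986} (see also \cite[Thm.~4.5]{Kock2004}), now read off from Theorem~\ref{thm:main3}; the decomposition of $\HH^0(X,\Omega_X^{\otimes m})$ into projective indecomposables is then determined by the formula~(\ref{eq:naj})--(\ref{eq:nab}), which depends only on $g(Z) = 0$, the branch locus $Z_{\mathrm{br}}$, the inertia groups $G_{y(z)}$, and the fundamental characters $\theta_{y(z)}$ — i.e.\ exactly on the lower ramification groups and fundamental characters of the cover $X \to X/G$. This proves part~(i).

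For part~(ii), I would pass from the algebraic closure $k$ back down to the perfect subfield $k_1$ containing $k(v)$. Since $X_1 = k_1 \otimes_{k(v)}(k(v)\otimes_A \mathcal{X}_A(\ell))$ and $X = k \otimes_{k_1} X_1$, flat base change (as in \cite[III.9.3]{Hartshorne1977}) gives $\HH^0(X, \Omega_X^{\otimes m}) \cong k \otimes_{k_1} \HH^0(X_1, \Omega_{X_1}^{\otimes m})$ as $kG$-modules, compatibly with the $G$-action. A $k_1 G$-module $N$ is projective if and only if $k \otimes_{k_1} N$ is a projective $kG$-module (projectivity is detected after faithfully flat base change, or via the fact that $\mathrm{Ext}^1_{k_1 G}(N, -) \otimes_{k_1} k = \mathrm{Ext}^1_{kG}(k \otimes_{k_1} N, k \otimes_{k_1} -)$), so part~(i) forces $\HH^0(X_1, \Omega_{X_1}^{\otimes m})$ to be projective. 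Moreover, the decomposition of a projective $k_1 G$-module into indecomposable projectives is governed by its Brauer character, and the branch points, inertia groups and fundamental characters of $X_1 \to X_1/G$ coincide with those computed over $k$ (the residue fields $k(v)$ already contain the relevant roots of unity once $F$ does, and the fundamental characters take values in $k(v)^* \hookrightarrow k_1^*$), so the $k_1 G$-module structure is likewise determined by the ramification data. The main obstacle here is bookkeeping rather than conceptual: one must check carefully that the tameness hypothesis $p > 3$, $\ell \ge 7$ really does force $n_I = 0$ using \cite{Moreno1993}, and that the descent of projectivity and of the Brauer-character decomposition from $k$ to the non-algebraically-closed perfect field $k_1$ is valid — the latter is standard modular representation theory but deserves an explicit reference to \cite[\S II.5--II.6]{Alperin1986} or to the argument in \cite[\S3]{BleherChinburgKontogeorgis2020} used for reduction step (II).
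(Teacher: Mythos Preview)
Your approach is essentially correct and matches the paper's: the key inputs are (a) tameness of $X\to X/G$ when $p>3$ (from \cite{Moreno1993}), (b) the vanishing $\HH^1(X,\Omega_X^{\otimes m})=0$ because $m>1$ and $g(X)\ge 3$, and (c) Nakajima's projectivity theorem \cite[Thm.~2]{Nakajima1986}, followed by a standard base-change argument for part~(ii). This is exactly what the paper does, citing the analogous \cite[Lemma~5.2]{BleherChinburgKontogeorgis2020} for the descent to $k_1$.

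One small point worth tightening: you frame the argument as an application of Theorem~\ref{thm:main3} with $n_I=0$, but Theorem~\ref{thm:main3} sits under Assumption~\ref{ass:main3}, which requires $G$ to be $p$-hypo-elementary, and $\mathrm{PSL}(2,\mathbb{F}_\ell)$ is not. You acknowledge this via the Conlon induction remark, but the cleaner path---and the one the paper takes---is simply to invoke Nakajima's theorem directly, since it applies to \emph{any} finite group acting tamely and already gives both projectivity and the Brauer-character formula (your Equation~(\ref{eq:betaj})) in terms of the ramification data. Routing through Theorem~\ref{thm:main3} adds no content here and creates a mild logical detour.
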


Lemma \ref{lem:tamemodular} is proved using similar arguments to the ones used in the proof of \cite[Lemma 5.2]{BleherChinburgKontogeorgis2020}. The main difference is that since $m>1$ and since $g(X)\ge 3$ by \cite[Cor. 3.2]{BendingCaminaGuralnick2005}, it follows that $\mathrm{deg}(\Omega_X^{\otimes m})>2g(X)-2$, which implies $\HH^1(X,\Omega_X^{\otimes m})=0$. This forces $\HH^0(X,\Omega_X^{\otimes m})$ to be a projective $kG$-module by \cite[Thm. 2]{Nakajima1986}.

\subsection{Proofs of Theorems $\ref{thm:firstmodtheorem}$ and $\ref{thm:secondmodtheorem}$ when $p=3$}
\label{ss:proof3}

These proofs follow the same main steps as the proofs of \cite[Thms. 1.2 and 1.3]{BleherChinburgKontogeorgis2020}, where we replace \cite[Thm. 1.1 and Remark 4.4]{BleherChinburgKontogeorgis2020} by Corollaries \ref{thm:oldmain} and \ref{thm:main3cor}, and we replace \cite[Thm. 1.4]{BleherChinburgKontogeorgis2020} by Theorem \ref{thm:modularresult} below, which gives a description of the $kG$-module structure of the holomorphic $m$-differentials of $X=X_3(\ell)$ when $m>1$. 

The proof of Theorem \ref{thm:modularresult} is more complicated than the proof of \cite[Thm. 1.4]{BleherChinburgKontogeorgis2020}, for the following reasons. Let $m>1$.
\begin{itemize}
\item[(A)] When restricting $\HH^0(X,\Omega_X^{\otimes m})$ to $3$-hypo-elementary subgroups $\Gamma$ of $G$, we need to determine canonical divisors on $X/\Gamma$.
\item[(B)] The indecomposable non-projective $kG$-modules that occur as direct summands of $\HH^0(X,\Omega_X^{\otimes m})$  depend on the congruence class of $m$ modulo $6$.
\item[(C)] The values of the Brauer character of $\HH^0(X,\Omega_X^{\otimes m})$ at elements of $G$ of order $\ell$ depend on the congruence class of $m$ modulo $\ell$. 
\item[(D)] There are indecomposable non-projective $kG$-modules that are not uniserial and that occur as direct summands of $\HH^0(X,\Omega_X^{\otimes m})$.
\end{itemize}

Because of (D), our notation for the isomorphism classes of indecomposable $kG$-modules that occur as direct summands of $\HH^0(X,\Omega_X^{\otimes m})$ is also more complicated, see Notation \ref{not:PSL} below. We refer to the work in \cite{Burkhardt1976} from which a description of the isomorphism classes of all indecomposable $kG$-modules follows. 

\begin{nota}
\label{not:PSL}
Let $\ell\ge 7$ be a prime number, let $G = \mathrm{PSL}(2,\mathbb{F}_\ell)$, and let $k$ be an algebraically closed field of characteristic $p=3$. 
\begin{enumerate}
\item[(a)]
If $T$ is a simple $kG$-module, then $U_{T,b}^{(G)}$ denotes a uniserial $kG$-module with $b$ composition factors whose socle is isomorphic to $T$. The isomorphism class of $U_{T,b}^{(G)}$ is uniquely determined by $T$ and $b$ (see, for example, \cite{Burkhardt1976}). 
\item[(b)] 
If $\ell\equiv -1 \mod 3$ then there exist indecomposable $kG$-modules that are not uniserial. These modules all belong to the principal block of $kG$. There are precisely two isomorphism classes of simple $kG$-modules belonging to the principal block, represented by the trivial simple $kG$-module $T_0$ and a simple $kG$-module $\widetilde{T}_0$ of $k$-dimension $\ell-1$. It follows, for example, from \cite{Burkhardt1976} that the isomorphism classes of the non-uniserial indecomposable $kG$-modules are all uniquely determined by their socles and their tops $($i.e. radical quotients$)$, together with the number of their composition factors that are isomorphic to $\widetilde{T}_0$. There are three different types of such modules. We use the notation $U^{(G)}_{T_0,T_0,b}$ $($resp. $U^{(G)}_{T_0,\widetilde{T}_0,b}$, resp.  $U^{(G)}_{\widetilde{T}_0,T_0,b}$$)$ to denote an indecomposable $kG$-module that has $b$ composition factors isomorphic to $\widetilde{T}_0$, whose socle is isomorphic to $T_0\oplus\widetilde{T}_0$ $($resp. $T_0\oplus\widetilde{T}_0$, resp. $\widetilde{T}_0$$)$ and whose top is isomorphic to $T_0\oplus\widetilde{T}_0$ $($resp. $\widetilde{T}_0$, resp. $T_0\oplus\widetilde{T}_0$$)$.
\end{enumerate}
\end{nota}

\begin{theorem}
\label{thm:modularresult}
Suppose $m>1$ and $\ell\ge 7$. Let $v \in \mathcal{V}(F,3)$, let $k$ be an algebraically closed field containing $k(v)$, and let  $X:=X_3(\ell)$ be as in Equation $(\ref{eq:reductionmodular})$.
Let $\epsilon=\pm 1$ be such that $\ell\equiv \epsilon \mod 3$. Write $\ell-\epsilon = 2\cdot 3^n\cdot n'$ where $3$ does not divide $n'$. For $i\in\{0,1\}$, define $\delta_i$ to be $1$ if $m\equiv i\mod 3$ and to be $0$ otherwise. Define $\delta_m\in\{0,1\}$ by $m\equiv \delta_m\mod 2$.

\begin{itemize}
\item[(i)] There exists a projective $kG$-module $Q_\ell$, depending on $\ell$, such that the following is true:
\begin{itemize}
\item[(1)] Suppose $\ell\equiv 1\mod 4$ and $\ell\equiv -1 \mod 3$. Let $T_0$ denote the trivial simple $kG$-module. For $0\le t\le (n'-1)/2$, let $\widetilde{T}_t$ be representatives of simple $kG$-modules of $k$-dimension $\ell-1$ such that $\widetilde{T}_0$ belongs to the principal block of $kG$. As a $kG$-module,
\begin{eqnarray*}
\HH^0(X,\Omega_X^{\otimes m})&\cong &\delta_0(1-\delta_m)\,U_{T_0,T_0,3^{n-1}+1}^{(G)}\;\oplus \;\delta_0\delta_m\,U_{\widetilde{T}_0,3^{n-1}}^{(G)}  \\
\nonumber
&&\oplus\;\delta_1\delta_m\,U_{T_0,\widetilde{T}_0,(3^{n-1}+1)/2}^{(G)}\;\oplus\; \delta_1(1-\delta_m)\,U_{\widetilde{T}_0,T_0,(3^{n-1}+1)/2}^{(G)}  \\
\nonumber
&&\oplus\;\bigoplus_{t=1}^{(n'-1)/2} \delta_0\,U_{\widetilde{T}_t,2\cdot 3^{n-1}}^{(G)}\;\oplus\; \bigoplus_{t=1}^{(n'-1)/2} \delta_1\,U_{\widetilde{T}_t,3^{n-1}}^{(G)} \;\oplus\; Q_\ell.
\end{eqnarray*}
\item[(2)] Suppose $\ell\equiv -1\mod 4$ and $\ell\equiv 1 \mod 3$. Let $T_0$ denote the trivial simple $kG$-module, and let $T_1$ be a simple $kG$-module of $k$-dimension $\ell$. For $1\le t\le (n'-1)/2$, let $\widetilde{T}_t$ be representatives of simple $kG$-modules of $k$-dimension $\ell+1$. As a $kG$-module,
\begin{eqnarray*}
\HH^0(X,\Omega_X^{\otimes m})&\cong & \delta_0(1-\delta_m)\,U_{T_0,3^{n-1}}^{(G)} \;\oplus\; \delta_0\delta_m\,U_{T_1,3^{n-1}}^{(G)} \\
\nonumber
&&\oplus\; \delta_1\delta_m\,U_{T_0,2\cdot 3^{n-1}}^{(G)}\;\oplus\; \delta_1(1-\delta_m)\,U_{T_1,2\cdot 3^{n-1}}^{(G)} \\
\nonumber
&&\oplus\,\bigoplus_{t=1}^{(n'-1)/2} \delta_0\,U_{\widetilde{T}_t,3^{n-1}}^{(G)}  \;\oplus\; \bigoplus_{t=1}^{(n'-1)/2} \delta_1\,U_{\widetilde{T}_t,2\cdot 3^{n-1}}^{(G)} \;\oplus\; Q_\ell.
\end{eqnarray*}
\item[(3)] Suppose $\ell\equiv 1\mod 4$ and $\ell\equiv 1 \mod 3$. Let $T_0$ denote the trivial simple $kG$-module, and let $T_1$ be a simple $kG$-module of $k$-dimension $\ell$. For $1\le t\le (n'/2-1)$, let $\widetilde{T}_t$ be representatives of simple $kG$-modules of $k$-dimension $\ell+1$. There exist simple $kG$-modules $T_{0,1}$ and $T_{1,0}$ of $k$-dimension $(\ell+1)/2$ such that, as a $kG$-module,
\begin{eqnarray*}
\HH^0(X,\Omega_X^{\otimes m})&\cong & \delta_0(1-\delta_m)\left(U_{T_0,3^{n-1}}^{(G)}\oplus U_{T_{0,1},3^{n-1}}^{(G)}\right)\;\oplus\;\delta_0 \delta_m\left(U_{T_1,3^{n-1}}^{(G)} \oplus U_{T_{1,0},3^{n-1}}^{(G)}\right)\\
\nonumber
&&\oplus\; \delta_1\delta_m\left(U_{T_0,2\cdot 3^{n-1}}^{(G)}\oplus U_{T_{0,1},2\cdot 3^{n-1}}^{(G)} \right)\;\oplus\;\delta_1 (1-\delta_m)\left(U_{T_1,2\cdot 3^{n-1}}^{(G)} \oplus U_{T_{1,0},2\cdot 3^{n-1}}^{(G)}\right)\\
\nonumber
&&\oplus\;  \bigoplus_{t=1}^{(n'/2-1)} \delta_0\;U_{\widetilde{T}_t,3^{n-1}}^{(G)} \;\oplus\;  \bigoplus_{t=1}^{(n'/2-1)} \delta_1\;U_{\widetilde{T}_t,2\cdot3^{n-1}}^{(G)} \;\oplus\; Q_\ell.
\end{eqnarray*}
\item[(4)] Suppose $\ell\equiv -1\mod 4$ and $\ell\equiv -1 \mod 3$. Let $T_0$ denote the trivial simple $kG$-module. For $0\le t\le (n'/2-1)$, let $\widetilde{T}_t$ be representatives of simple $kG$-modules of $k$-dimension $\ell-1$ such that $\widetilde{T}_0$ belongs to the principal block of $kG$. There exist simple $kG$-modules $T_{0,1}$ and $T_{1,0}$ of $k$-dimension $(\ell-1)/2$ such that, as a $kG$-module,
\begin{eqnarray*}
\HH^0(X,\Omega_X^{\otimes m})&\cong & \delta_0(1-\delta_m)\left(U_{T_0,T_0,3^{n-1}+1}^{(G)}\oplus U_{T_{0,1},2\cdot 3^{n-1}}^{(G)}\right) \;\oplus\; \delta_0\delta_m\left(U_{\widetilde{T}_0,3^{n-1}}^{(G)} \oplus U_{T_{1,0},2\cdot 3^{n-1}}^{(G)}\right)\\
\nonumber
&&\oplus\; \delta_1\delta_m\left(U_{T_0,\widetilde{T}_0,\frac{3^{n-1}+1}{2}}^{(G)}\oplus U_{T_{0,1},3^{n-1}}^{(G)} \right)\;\oplus\; \delta_1(1-\delta_m)\left(U_{\widetilde{T}_0,T_0,\frac{3^{n-1}+1}{2}}^{(G)} \oplus U_{T_{1,0},3^{n-1}}^{(G)}\right) \\
\nonumber
&&\oplus\; \bigoplus_{t=1}^{(n'/2-1)} \delta_0\; U_{\widetilde{T}_t,2\cdot 3^{n-1}}^{(G)} \;\oplus\; \bigoplus_{t=1}^{(n'/2-1)} \delta_1\;U_{\widetilde{T}_t,3^{n-1}}^{(G)} \;\oplus\; Q_\ell.
\end{eqnarray*}
\end{itemize}
The multiplicities of the projective indecomposable $kG$-modules in $Q_\ell$ are known explicitly.  In parts $(3)$ and $(4)$, there are two conjugacy classes of subgroups of $G$, represented by $H_1$ and $H_2$, that are isomorphic to the symmetric group $\Sigma_3$ such that the conjugates of $H_1$ $($resp. $H_2$$)$ occur $($resp. do not occur$)$ as inertia groups of closed points of $X$. This characterizes the simple $kG$-module $T_{0,1}$ in parts $(3)$ and $(4)$ as follows. The restriction of $T_{0,1}$ to $H_1$ $($resp. $H_2$$)$ is a direct sum of a projective module and a non-projective indecomposable module whose socle is the trivial simple module $($resp. the simple module corresponding to the sign character$)$.

\item[(ii)] Let $k_1$ be a perfect field of characteristic $3$ containing $k(v)$, and let $k$ be an algebraic closure of $k_1$. Define $X_1:=k_1\otimes_{k(v)}(k(v)\otimes_A\mathcal{X}_A(\ell))$. Then
$$k\otimes_{k_1} \HH^0(X_1,\Omega_{X_1}^{\otimes m})\cong \HH^0(X,\Omega_X^{\otimes m})$$
as $kG$-modules, and the decomposition of $\HH^0(X_1,\Omega_{X_1}^{\otimes m})$ into indecomposable $k_1G$-modules is uniquely determined by the decomposition of $\HH^0(X,\Omega_X^{\otimes m})$ into indecomposable $kG$-modules. Moreover, the $k_1G$-module $\HH^0(X_1,\Omega_{X_1}^{\otimes m})$ is a direct sum over blocks $B_1$ of $k_1G$ of modules of the form $P_{B_1} \oplus U_{B_1}$ in which $P_{B_1}$ is a projective $B_1$-module and $U_{B_1}$ is either the zero module or a single indecomposable non-projective $B_1$-module.  In addition, one can determine $P_{B_1}$ and $U_{B_1}$ from the lower ramification groups and the fundamental characters of the closed points of $X$ that are ramified in the cover $X\to X/G$. \end{itemize}
\end{theorem}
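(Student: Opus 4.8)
The plan is to prove Theorem \ref{thm:modularresult} by following the strategy of the proof of \cite[Thm.\ 1.4]{BleherChinburgKontogeorgis2020}, now feeding in Corollaries \ref{thm:oldmain} and \ref{thm:main3cor} (hence Theorem \ref{thm:main3} applied to $E=mK_X$) in place of \cite[Thm.\ 1.1 and Remark 4.4]{BleherChinburgKontogeorgis2020}, and using Burkhardt's classification \cite{Burkhardt1976} of the indecomposable $k\mathrm{PSL}(2,\mathbb{F}_\ell)$-modules to package the answer. First I would record, from \cite[\S5.6]{Moreno1993}, the complete ramification data of $\gamma\colon X=X_3(\ell)\to X/G\cong\mathbb{P}^1_k$: the branch points, their (cyclic) inertia groups, the lower ramification filtrations, and the fundamental characters. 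In particular there is a single conjugacy class of wildly ramified points, with cyclic Sylow $3$-inertia group $I=\langle\tau\rangle$ of order $3^n$ (where $\ell-\epsilon=2\cdot 3^n\cdot n'$), the remaining branch points being tamely ramified with inertia of orders dividing $2$, $n'$ (in the torus), and $\ell$ (at the cusp). Since $g(X/G)=0$, Riemann--Hurwitz fixes $g(X)$, and as $m>1$ and $g(X)\ge 3$ by \cite[Cor.\ 3.2]{BendingCaminaGuralnick2005} we have $\mathrm{deg}(mK_X)>2g(X)-2$, so $\HH^1(X,\Omega_X^{\otimes m})=0$; thus Corollary \ref{thm:main3cor} is applicable after the reductions below.

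Next I would reduce to $3$-hypo-elementary subgroups. By the Conlon induction theorem \cite[Thm.\ (80.51)]{CRII}, as in \cite[\S3]{BleherChinburgKontogeorgis2020}, the class of $\HH^0(X,\Omega_X^{\otimes m})$ in the Green ring of $kG$ is determined by its restrictions to the $3$-hypo-elementary subgroups $\Gamma\le G$; up to conjugacy these are the cyclic $3'$-subgroups (on which the restriction is semisimple), the cyclic subgroups of a torus containing $I$, and the generalized dihedral groups $\Gamma\cong I\rtimes C_2$ of order $2\cdot 3^n$ (with $C_2$ acting by inversion), whose $n=1$ members are the groups $H_1,H_2\cong\Sigma_3$ appearing in the statement. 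For each such $\Gamma$ I would compute the ramification data of $X\to X/\Gamma$ --- this requires determining the genus of $X/\Gamma$ and, via Riemann--Hurwitz and Remark \ref{rem:combine}(a), the relevant degrees, as well as the fundamental characters of the tame quotient; the appearance of canonical divisors on $X/\Gamma$ (point (A) above) is the new bookkeeping relative to $m=1$ --- and then apply Corollary \ref{thm:main3cor}, i.e.\ Theorem \ref{thm:main3} with $G$ replaced by $\Gamma$ and $E=mK_X$, to read off the precise $k\Gamma$-module structure of $\HH^0(X,\Omega_X^{\otimes m})|_\Gamma$. The floor functions and the $p$-adic digit sums $a_{\ell,t}$ in Equation $(\ref{eq:Dj})$ make the non-projective summands depend only on the residue of $m$ modulo $6$ (point (B)), and I would track this residue throughout.

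The restrictions from the previous step determine $\HH^0(X,\Omega_X^{\otimes m})$ up to its projective part, which I would pin down via its Brauer character $\beta_m$ on $3$-regular classes: the values at elements of order prime to $3\ell$ are given by the Chevalley--Weil formula \cite{ChevalleyWeil1934} applied to the prime-to-$3$ quotient data, and the values at elements of order $\ell$ are computed by a holomorphic Lefschetz/fixed-point argument at the cusps and depend on $m$ modulo $\ell$ (point (C)); subtracting the Brauer characters of the already-identified non-projective summands leaves an explicit non-negative integral combination of projective indecomposables, which is $Q_\ell$. Finally I would assemble the indecomposable summands using \cite{Burkhardt1976}: when $\ell\equiv 1\bmod 3$ all relevant blocks have cyclic defect, every indecomposable is uniserial of the form $U^{(G)}_{T,b}$, and the socle and length data from the restrictions determine each summand; when $\ell\equiv -1\bmod 3$ the principal block has defect $\Sigma_3$ and contains the non-uniserial modules $U^{(G)}_{T_0,T_0,b}$, $U^{(G)}_{T_0,\widetilde{T}_0,b}$, $U^{(G)}_{\widetilde{T}_0,T_0,b}$ of Notation \ref{not:PSL}(b), which must be detected from their socles, tops, and number of $\widetilde{T}_0$-composition factors (point (D)) --- and this is where I expect the main obstacle to lie, since Theorem \ref{thm:main3} only produces uniserial modules over the $3$-hypo-elementary $\Gamma$, so one has to glue the socle/top information extracted from restriction to the several subgroups $\Gamma$ (in particular to $H_1$ and $H_2$, which singles out $T_{0,1}$ as in the statement) and cross-check it against $\beta_m$ and Burkhardt's list. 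The characterization of $T_{0,1}$ through its restrictions to $H_1$ and $H_2$ then follows from the explicit decompositions of $\HH^0(X,\Omega_X^{\otimes m})|_{H_i}$ computed above.

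For part (ii), since $\HH^0$ commutes with the flat base change $k/k_1$, one has $k\otimes_{k_1}\HH^0(X_1,\Omega_{X_1}^{\otimes m})\cong\HH^0(X,\Omega_X^{\otimes m})$; by Krull--Schmidt and Galois descent the indecomposable decomposition of the $k_1G$-module is then uniquely determined by that of the $kG$-module, the $\mathrm{Gal}(k/k_1)$-semilinear action being the only additional datum, and $k(v)$ already containing enough roots of unity by Assumption \ref{ass:modular}. The block-wise form $P_{B_1}\oplus U_{B_1}$ is read off from part (i): each block of $kG$ with non-trivial defect occurring here is either a Brauer tree algebra or the $\Sigma_3$-block, and in the explicit decompositions of part (i) each such block contributes a projective module plus at most one non-projective indecomposable; this property descends to $k_1$, which in turn yields Theorems \ref{thm:firstmodtheorem} and \ref{thm:secondmodtheorem} for $p=3$ exactly as in \cite{BleherChinburgKontogeorgis2020}.
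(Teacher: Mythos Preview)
Your overall outline---restrict to well-chosen subgroups, apply Corollary \ref{thm:main3cor} there, compute the Brauer character, and reassemble---is on the right track, but there are two concrete errors and one genuine strategic gap.

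\textbf{Factual errors.} First, the wild inertia is not of order $3^n$. From \cite[\S5.6]{Moreno1993} the wildly ramified points of $X\to X/G$ have inertia groups isomorphic to $\Sigma_3$, so the $3$-part of every inertia group has order $3$; the subgroup $I$ of Notation \ref{not:main3}(a) therefore has order $3$ (i.e.\ $n_I=1$), not $3^n$. The Sylow $3$-subgroup $P$ of $G$ has order $3^n$, but $P$ does not occur as an inertia group. Second, the principal block does not have ``defect $\Sigma_3$'': defect groups are $p$-groups, and here all blocks of maximal defect have \emph{cyclic} defect group of order $3^n$. The non-uniserial modules in Notation \ref{not:PSL}(b) arise because the Brauer tree of the principal block (when $\ell\equiv -1\bmod 3$) has an exceptional vertex, not because the defect is non-cyclic. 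These two errors propagate through your description of the dihedral subgroups (you write $I\rtimes C_2$ of order $2\cdot 3^n$, but $I$ has order $3$; the paper's dihedral groups $\Delta_1,\Delta_2$ are $P\rtimes C_2$ with $P$ the full Sylow $3$-subgroup).

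\textbf{Strategic gap.} You invoke Conlon induction to say that restrictions to all $3$-hypo-elementary subgroups determine the Green-ring class. That is true, but it only tells you the class exists, not how to write it down; actually reconstructing the indecomposable summands from a compatible family of restrictions is the hard part, and your ``gluing'' paragraph does not explain how you would do this. The paper does \emph{not} use Conlon induction here. Instead it exploits that $N_1=N_G(I)$ is a dihedral group of order $\ell-\epsilon$, computes the \emph{stable} $kN_1$-structure of $\mathrm{Res}^G_{N_1}\HH^0(X,\Omega_X^{\otimes m})$ from its restrictions to the specific subgroups $V,\Delta_1,\Delta_2\le N_1$ (this works because every $kN_1$-module is determined by these three restrictions), and then applies the Green correspondence---via the stable equivalence between $kG\text{-mod}$ and $kN_1\text{-mod}$ and the Auslander--Reiten theory of \cite[\S X.1]{AuslanderReitenSmalo1997}---to identify each non-projective indecomposable $kN_1$-summand with its Green correspondent over $kG$. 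Walking along irreducible maps in the stable Auslander--Reiten quiver, starting from the Green correspondents of the simple $kN_1$-modules (identified in \cite{Burkhardt1976}), is precisely how the non-uniserial modules $U^{(G)}_{T_0,T_0,b}$ etc.\ are detected. Without this step you have no mechanism to pass from the hypo-elementary restrictions back to an explicit $kG$-decomposition; this is the missing idea in your proposal.
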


Theorem \ref{thm:modularresult} extends \cite[Thm. 1.4]{BleherChinburgKontogeorgis2020} from $m=1$ to arbitrary $m>1$. Detailed descriptions of the projective modules $Q_\ell$ in part (i) of Theorem \ref{thm:modularresult} are given in Propositions \ref{prop:full_mixed} and \ref{prop:full_equal}. 
Because of its computational
nature, we defer the proof of Theorem \ref{thm:modularresult} to the next section.


\section{Holomorphic poly-differentials of the modular curves $X_3(\ell)$}
\label{s:modular3}

We make the following assumptions throughout this section.

\begin{assume}
\label{ass:modular3}
Let $m > 1$ be an integer, let $\ell\ge 7$ be a prime number, and let $p = 3$. 
Let $v \in \mathcal{V}(F,3)$, let $k$ be an algebraically closed field containing $k(v)$, and let  $X=X_3(\ell)$ be as in Equation $(\ref{eq:reductionmodular})$. Let $G = \mathrm{PSL}(2,\mathbb{F}_\ell)$. 
\end{assume}

The goal of this section is to determine the precise $kG$-module structure of $\HH^0(X,\Omega_X^{\otimes m})$. In particular, we will prove Theorem \ref{thm:modularresult}. We will adapt the strategy used in \cite[\S6]{BleherChinburgKontogeorgis2020} to prove \cite[Thm. 1.4]{BleherChinburgKontogeorgis2020} when $m=1$ to our situation when $m>1$.  As we pointed out in the previous section, the proof of Theorem \ref{thm:modularresult} is more involved than the proof of \cite[Thm. 1.4]{BleherChinburgKontogeorgis2020}.

By \cite[p. 193]{Moreno1993}, the ramification points of the cover $X\to X/G$ that are wildly ramified have inertia groups that are isomorphic to the symmetric group $\Sigma_3$ on three letters. We use that there is precise knowledge of the subgroup structure of $G = \mathrm{PSL}(2,\mathbb{F}_\ell)$ (see, for example, \cite[\S II.8]{Huppert1967}).

\begin{nota}
\label{not:modular3}
Suppose Assumption \ref{ass:modular3} holds. Write
$$m=3\cdot m'+i_m\quad\mbox{where $i_m\in\{0,1,2\}$.}$$
For $i\in\{0,1\}$, define $\delta_i$ to be $1$ when $i=i_m$ and $0$ otherwise. Define $\delta_m$ to be $1$ if $m$ is odd and $0$ otherwise. Let $\epsilon,\epsilon'\in\{\pm 1\}$ be such that 
$$\ell\equiv \epsilon \mod 3\quad\mbox{ and }\quad\ell\equiv \epsilon' \mod 4.$$
Write $\ell - \epsilon =3^n\cdot 2\cdot n'$ where $3$ does not divide $n'$.
\end{nota}

\begin{remark}
\label{rem:modular3}
As in \cite[\S6]{BleherChinburgKontogeorgis2020}, we fix the following $3$-hypo-elementary subgroups of $G$:
\begin{enumerate}
\item[(a)] a cyclic subgroup $V=\langle v\rangle$ of order $(\ell-\epsilon)/2=3^n\cdot n'$;
\item[(b)] two dihedral groups $\Delta_1=\langle v',s\rangle$ and $\Delta_2=\langle v',vs\rangle$ of order $2\cdot 3^n$, where $v'=v^{n'}\in V$ is an element of order $3^n$ and $s\in N_G(V)-V$ is an element of order 2;
\item[(c)] a cyclic subgroup $W=\langle w\rangle$ of order $(\ell+\epsilon)/2$;
\item[(d)] a cyclic subgroup $R$ of order $\ell$.
\end{enumerate}
We let $\tau=(v')^{3^{n-1}}$ and $I=\langle \tau\rangle$, so that $I$ is the unique subgroup of order 3 in each of $V,\Delta_1,\Delta_2$. If $\epsilon=-\epsilon'$, i.e. if $\ell\equiv -\epsilon\mod 4$, we also let $\Delta=\Delta_1$. Moreover, we let $P=\langle v'\rangle$, $P_1=I$, and $N_1=N_G(P_1)$. It follows, for example, from \cite[\S II.8]{Huppert1967} that $N_1=\langle v, s\rangle$ is a dihedral group of order $\ell-\epsilon$. 
\end{remark}

Note that not all $3$-hypo-elementary subgroups of $G$ are conjugate to one of the groups in (a)-(d) of Remark \ref{rem:modular3}. Rather than using the Conlon induction theorem, which we used to prove Theorem \ref{thm:newmain} and Corollary \ref{thm:oldmain} in general and which requires all $3$-hypo-elementary subgroups of $G$, we will adapt to our situation the approach used in \cite[\S6]{BleherChinburgKontogeorgis2020} which only needs the $3$-hypo-elementary subgroups in (a)-(d) of Remark \ref{rem:modular3}. The main observations that make this approach work are as follows (see \cite[\S6]{BleherChinburgKontogeorgis2020} for details):
\begin{itemize}
\item Every $kN_1$-module is uniquely determined by its restrictions to the subgroups in $\{V,\Delta_1,\Delta_2\}$.
\item There is a \emph{stable} equivalence between the module categories of $kG$ and $kN_1$, i.e. an equivalence of these categories modulo projective modules (see \cite[\S V.17]{Alperin1986}). This allows us to use \cite[\S X.1]{AuslanderReitenSmalo1997} to detect the non-projective indecomposable modules for $kG$ and $kN_1$, respectively, that correspond to each other under the Green correspondence  \cite[Thm. V.17.3]{Alperin1986}.
\item The \emph{stable} $kG$-module structure of $\HH^0(X,\Omega_X^{\otimes m})$ and 
its Brauer character determine the full $kG$-module structure of $\HH^0(X,\Omega_X^{\otimes m})$.
\end{itemize}

In \S\ref{ss:canonical}, we determine necessary information about the canonical divisors on $X/\Gamma$ and $X/I\cap\Gamma$  when $\Gamma\in\{\Delta_1,\Delta_2,V,W,R\}$. In \S\ref{ss:N1structure}, we use Corollary \ref{thm:main3cor} to determine the precise $k\Gamma$-module structure of $\mathrm{Res}^G_{\Gamma}\HH^0(X,\Omega_X^{\otimes m})$ for $\Gamma\in\{V,\Delta_1,\Delta_2\}$. We then use this to determine the \emph{stable} $kN_1$-module structure of $\mathrm{Res}^G_{N_1}\HH^0(X,\Omega_X^{\otimes m})$.
In \S\ref{ss:Greencorrespondence}, we use the Green correspondence to determine the \emph{stable} $kG$-module structure of $\HH^0(X,\Omega_X^{\otimes m})$. In \S\ref{ss:brauer}, we determine the Brauer character of the $kG$-module $\HH^0(X,\Omega_X^{\otimes m})$. In \S\ref{ss:fullmodular}, we use \S\ref{ss:Greencorrespondence} and \S\ref{ss:brauer} to determine the full $kG$-module structure of $\HH^0(X,\Omega_X^{\otimes m})$ and to complete the proof of Theorem \ref{thm:modularresult}.


\subsection{Canonical divisors}
\label{ss:canonical}

We consider the canonical divisors on $X/\Gamma$ and $X/I\cap\Gamma$ when
$$\Gamma\in \{\Delta_1,\Delta_2,V,W,R\},$$
and we determine the information needed to be able to apply Corollary \ref{thm:main3cor}.
Let $Y=X/I\cap\Gamma$ and $Z=X/\Gamma$, and let $\pi:X\to Y$ and $\lambda:Y\to Z$ be the corresponding Galois covers with Galois groups $I\cap \Gamma$ and $\overline{\Gamma}=\Gamma/I\cap\Gamma$, respectively. Let $Z_{\mathrm{br}}$ be the set of closed points in $Z$ that are branch points of $\lambda$. The cover $X\to X/G\cong\mathbb{P}^1_k$ factors into the Galois cover $\lambda\circ\pi:X\to Z$ followed by a separable morphism $f:Z=X/\Gamma \to X/G$ which is not Galois. A canonical divisor on $X/G\cong\mathbb{P}^1_k$ is given by $K_{X/G}=-2\infty$, and a canonical divisor on $Z$ is given by
$$K_Z=f^*(-2\infty) +\mathrm{Ram}_f$$
where $\mathrm{Ram}_f$ is the ramification divisor of $f$ (see \cite[Prop. IV.2.3]{Hartshorne1977}). Therefore,
\begin{equation}
\label{eq:KYmodular}
K_Y=\lambda^*K_Z+\mathrm{Ram}_\lambda =(f\circ\lambda)^*(-2\infty) + \lambda^*\mathrm{Ram}_f+\mathrm{Ram}_\lambda.
\end{equation}

For each $\Gamma$ as above, we now determine $\mathrm{ord}_{y(z)}(K_Y)$ for all points $y(z)\in Y$ above $z\in Z_{\mathrm{br}}$. We give precise details in one of the cases and then list the remaining cases in Table \ref{tab:canonicalmodular}.

Suppose first that $\epsilon=-\epsilon'$ and that $\Gamma=\Delta=\Delta_1$. Then $I\cap\Gamma=I$, $Y=X/I$ and $\overline{\Gamma}=\Gamma/I$. Let $z\in Z_{\mathrm{br}}$, and let $y(z)\in Y$ and $x(z)\in X$ be points above it. By \cite[p. 193]{Moreno1993}, $x(z)$, $y(z)$ and $z$ all lie above $0\in X/G\cong\mathbb{P}^1_k$ and $G_{x(z),0}\cong\Sigma_3$. Hence, in Equation (\ref{eq:KYmodular}), the coefficient of $y(z)$ in $(f\circ\lambda)^*(-2\infty)$ is zero. If $I_{x(z)}$ (resp. $\Gamma_{x(z)}$) is the inertia group of $x(z)$ inside $I$ (resp. $\Gamma$), then $\overline{\Gamma}_{y(z)}\cong \Gamma_{x(z)}/I_{x(z)}$. Because $z\in Z_{\mathrm{br}}$, $\overline{\Gamma}_{y(z)}$ must be a non-trivial group of order prime to $3$, which implies $\#\overline{\Gamma}_{y(z)}=2$. Hence the coefficient of $y(z)$ in $\mathrm{Ram}_\lambda$ is 1. On the other hand, the coefficient of $y(z)$ in $\lambda^*\mathrm{Ram}_f$ equals $e_{y(z)/z}\cdot d_{z/0}$, where $e_{y(z)/z}= 2$ is the ramification index of $y(z)$ over $z$ and $d_{z/0}$ is the different exponent of $z$ over $0 \in X/G \cong \mathbb{P}^1_k$. By \cite[p. 193]{Moreno1993}, $d_{x(z)/0} = (6-1)+(3-1)=7$. Using the transitivity of the different, we have
$$d_{x(z)/0} = e_{x(z)/z}\cdot d_{z/0} + d_{x(z)/z}.$$
It follows from 
\cite[\S6.1.1 and \S6.2.1]{BleherChinburgKontogeorgis2020}
that there is precisely one point $z_1\in Z_{\mathrm{br}}$ with $e_{x(z_1)/z_1}=6$ and $d_{x(z_1)/z_1}=7$, and there are precisely $\frac{\ell-\epsilon'}{2}-1$ points $z_2,\ldots,z_{(\ell-\epsilon')/2}\in Z_{\mathrm{br}}$ with $e_{x(z_i)/z_i}=2$ and $d_{x(z_i)/z_i}=1$ for $2\le i \le \frac{\ell-\epsilon'}{2}$. Therefore, we obtain $d_{z_1/0}=0$ and $d_{z_i/0}=3$ for $2\le i \le \frac{\ell-\epsilon'}{2}$, which means
$$\mathrm{ord}_{y(z_i)}(K_Y)=\left\{\begin{array}{ll}1,&i=1,\\7,&2\le i\le \frac{\ell-\epsilon'}{2},\end{array}\right. \qquad \mbox{if $\epsilon=-\epsilon'$ and $\Gamma=\Delta$}.$$

We can use similar arguments for the remaining $\Gamma$. In Table \ref{tab:canonicalmodular}, we list for all $z\in Z_{\mathrm{br}}$, the values for $\mathrm{ord}_{y(z)}(K_Y)$ together with the orders of the inertia groups $\overline{\Gamma}_{y(z)}$ and $\Gamma_{x(z)}$.
\renewcommand{\arraystretch}{1.25}
\begin{table}[ht]
\caption{Canonical divisor $K_Y$ on $Y=X/I\cap\Gamma$ when $\Gamma\in\{\Delta_1,\Delta_2,V,W,R\}$.}
\label{tab:canonicalmodular}
$\begin{array}{cl||c|c|c|c|c}
\multicolumn{2}{c||}{\Gamma}&I\cap \Gamma&\mbox{all }z\in Z_{\mathrm{br}}&\mathrm{ord}_{y(z)}(K_Y)&\#\overline{\Gamma}_{y(z)}&\#\Gamma_{x(z)}\\ \hline\hline
\Delta=\Delta_1 &(\epsilon=-\epsilon')&I&\begin{array}{c}z_1\\z_i\quad (2\le i\le \frac{\ell-\epsilon'}{2})\end{array}&\begin{array}{c}1\\7\end{array}&\begin{array}{c}2\\2\end{array}&\begin{array}{c}6\\2\end{array}\\ \hline
\Delta_1 &(\epsilon=\epsilon')&I&\begin{array}{c}z_i\quad (i=1,2)\\z_i\quad (3\le i\le \frac{\ell-\epsilon'}{2})\end{array}&\begin{array}{c}1\\7\end{array}&\begin{array}{c}2\\2\end{array}&\begin{array}{c}6\\2\end{array}\\ \hline
\Delta_2 &(\epsilon=\epsilon')&I&z_i\quad (1\le i\le \frac{\ell-\epsilon'}{2})&7&2&2\\ \hline
V &(\epsilon=-\epsilon') & I & Z_{\mathrm{br}}=\emptyset&&&\\ \hline
V &(\epsilon=\epsilon') & I &z_i\quad (i=1,2)&7&2&2\\ \hline
W &(\epsilon=-\epsilon') & 1 &z_i\quad (i=1,2)&7&2&2\\ \hline
W &(\epsilon=\epsilon') & 1& Z_{\mathrm{br}}=\emptyset&&&\\ \hline
R&(\epsilon=\pm\epsilon')&1&z_i\quad (1\le i\le \frac{\ell-1}{2})&-\ell-1&\ell&\ell
\end{array}$
\end{table}
\renewcommand{\arraystretch}{1}


\subsection{The stable $kN_1$-module structure of the holomorphic poly-differentials}
\label{ss:N1structure}

We use a similar strategy to \cite[\S6.2]{BleherChinburgKontogeorgis2020}. More precisely, we first calculate the stable $k\Gamma$-module structure of $\mathrm{Res}^G_{\Gamma}\HH^0(X,\Omega_X^{\otimes m})$ for the subgroups $\Gamma\in\{V,\Delta_1,\Delta_2\}$ of $N_1$. In other words, we find the non-projective indecomposable direct $k\Gamma$-module summands of $\mathrm{Res}^G_{\Gamma}\HH^0(X,\Omega_X^{\otimes m})$, together with their multiplicities. We then use that every $kN_1$-module is uniquely determined by its restrictions to these subgroups to find the stable $kN_1$-module structure of $\mathrm{Res}^G_{N_1}\HH^0(X,\Omega_X^{\otimes m})$. 

The main differences when $m>1$ are that Corollary \ref{thm:main3cor}, and also Theorem \ref{thm:main3} applied to $E=mK_X$, depend on $m$ and that we need the information about the canonical divisors $K_Y$ given in Table \ref{tab:canonicalmodular}. It follows that the indecomposable non-projective $kN_1$-modules that occur as direct summands of $\mathrm{Res}^G_{N_1}\HH^0(X,\Omega_X^{\otimes m})$ depend on the congruence class of $m$ modulo $6$.

Let $\Gamma\in\{V,\Delta_1,\Delta_2\}$. As in \cite[\S6.2]{BleherChinburgKontogeorgis2020}, the subgroup of $\Gamma$ that is the greatest among the Sylow 3-subgroups of the inertia groups of all closed points on $X$ is equal to the unique subgroup $I=\langle \tau\rangle$ of $\Gamma$ of order 3. Moreover, there are precisely $3^{n-1}\cdot n'$ closed points $x$ on $X$ such that $3$ divides $\#\Gamma_x$, which is equivalent to $\Gamma_{x}\ge I$. Using \cite[Prop. IV.2 and Cor. 4 of Prop. IV.7]{SerreCorpsLocaux1968}, it follows from \cite[p. 193]{Moreno1993} that $I_{x,0}=I_{x,1}=I$ and $I_{x,i}=1$ for $i\ge 2$ for these points $x$. Let $y_{t,1},\ldots, y_{t,3^{n-1}}$, for $1\le t\le n'$, be points on $Y=X/I$ that lie below these points on $X$. Then, for $0\le j\le 2$, the divisor $D_j$ from Corollary \ref{thm:main3cor} is given as
\begin{equation}
\label{eq:Djmodular}
D_j=\sum_{t=1}^{n'}\sum_{i=1}^{3^{n-1}} \left\lfloor \frac{4m-j}{3}\right\rfloor y_{t,i}.
\end{equation}
Writing $m=3 m'+i_m$ as in Notation \ref{not:modular3}, where $i_m\in\{0,1,2\}$, we have
\begin{equation}
\label{eq:Djcoefficient}
\left\lfloor \frac{4m-j}{3}\right\rfloor = 4m' + \left\lfloor \frac{4i_m-j}{3}\right\rfloor = \left\{\begin{array}{ll}
4m'-1, & i_m=0, j\in\{2,1\},\\
4m', & (i_m=0, j=0)\mbox{ or } (i_m=1, j=2),\\
4m'+1, & i_m=1, j\in\{1,0\},\\
4m'+2, &i_m=2, j\in\{2,1,0\}.
\end{array}\right.
\end{equation}
This leads to the following equalities, for $j\in\{0,1\}$:
\begin{equation}
\label{eq:Djmodulardegree}
\frac{\mathrm{deg}(D_j)-\mathrm{deg}(D_{j+1})}{3^{n-1}\cdot n'}=
\left\lfloor \frac{4m-j}{3}\right\rfloor-\left\lfloor \frac{4m-(j+1)}{3}\right\rfloor =
\left\{\begin{array}{cl}
1,&(i_m,j)\in\{(0,0),(1,1)\},\\
0,&\mbox{otherwise}.
\end{array}\right.
\end{equation}


\subsubsection{The stable $kV$-module structure and the values of the Brauer character at $3$-regular elements of $V$}
\label{sss:V}

There are $n'$ isomorphism classes of simple $kV$-modules. As in \cite[\S6.2]{BleherChinburgKontogeorgis2020}, we denote representatives of these by $S_a^{(V)}$ for $0\le a\le n'-1$. Similarly to Notation \ref{not:main3ind}(b), we write $U_{a,b}^{(V)}$ for an indecomposable $kV$-module whose socle is isomorphic to $S_a^{(V)}$ and whose $k$-dimension is equal to $b$ with $1\le b\le 3^n$. The Galois cover $\lambda: Y=X/I \to Z=X/V$ is unramified if $\epsilon=-\epsilon'$ (resp. tamely ramified if $\epsilon=\epsilon'$) with Galois group $\overline{V}=V/I$. We now apply Theorem \ref{thm:main3} to $E=mK_X$ and the group $V$, where we will write $n_j(V)$ for the integers $n_j$ from Equation (\ref{eq:nj}) and $n_{a,b}^{(V)}$ for the integers $n_{a,b}$ from Equation (\ref{eq:nab}). Note that $n_I=1$ since $\#(I\cap V)=3$. As in Table \ref{tab:canonicalmodular}, if $\epsilon=-\epsilon'$ then $Z_{\mathrm{br}}=\emptyset$, and if $\epsilon=\epsilon'$ then $Z_{\mathrm{br}}=\{z_1,z_2\}$. In the latter case, we use that $E_j=mK_Y+D_j$, together with the information from Table \ref{tab:canonicalmodular}, to obtain that $\ell_{y(z_i),j}=\delta_m$ for $i=1,2$ and $0\le j\le 2$. By Remark \ref{rem:computation}, this immediately implies that for both cases $\epsilon=-\epsilon'$ and $\epsilon=\epsilon'$, we have $n(a,j)-n(a,j+1)=n_j(V)-n_{j+1}(V)$ for $j\in\{0,1\}$. Using the alternative formula for $n_j(V)$ from Remark \ref{rem:combine}(a), together with Equation (\ref{eq:Djmodulardegree}), we obtain, for $j\in\{0,1\}$,
\begin{equation}
\label{eq:needV}
n_j(V)-n_{j+1}(V) 
= \frac{\mathrm{deg}(D_j)-\mathrm{deg}(D_{j+1})}{\#\overline{V}}=
\left\{\begin{array}{cl}
1,&(i_m,j)\in\{(0,0),(1,1)\},\\
0,&\mbox{otherwise},
\end{array}\right.
\end{equation}
for all $\epsilon,\epsilon'\in\{\pm 1\}$. By Equation (\ref{eq:nab}),  $n_{a,(j+1)3^{n-1}}^{(V)}=n_j(V)-n_{j+1}(V)$, for $j\in\{0,1\}$ and $0\le a\le n'-1$, and these are all possible non-zero $n_{a,b}^{(V)}$ for $1\le b \le 3^n-1$.
Therefore, 
the non-projective indecomposable direct $kV$-module summands of $\mathrm{Res}^G_V\HH^0(X,\Omega_X^{\otimes m})$, with their multiplicities, are given by the direct sum
\begin{equation}
\label{eq:Vstable}
\delta_0\bigoplus_{a=0}^{n'-1}U_{a,3^{n-1}}^{(V)}
\;\oplus\; \delta_1\bigoplus_{a=0}^{n'-1}U_{a,2\cdot 3^{n-1}}^{(V)}
\end{equation}
where $\delta_0$ and $\delta_1$ are as in Notation \ref{not:modular3}.

We now use Equation (\ref{eq:Vstable}) to determine the values of the Brauer character $\beta$ of $\HH^0(X,\Omega_X^{\otimes m})$ at elements of $V$ that are $3$-regular, i.e. whose orders are not divisible by $3$. We write $V=\langle v\rangle$ as in Remark \ref{rem:modular3}, and let $v''=v^{3^n}$, so that $v''$ has order $n'$. Let $\xi_{n'}$ be a fixed primitive $(n')^{\mathrm{th}}$ root of unity so that,  for $0\le a\le n'-1$, $v''$ acts as multiplication by $\xi_{n'}^a$ on $S_a^{(V)}$. Applying Notation \ref{not:main3ind}(c) to the group $V$, it follows that each $U_{a,b}^{(V)}$ has $b$ composition factors, which are all isomorphic to $S_a^{(V)}$. Therefore, letting $\widetilde{\beta}^{(V)}$ be the Brauer character of the maximal projective direct $kV$-module summand of $\mathrm{Res}^G_V\HH^0(X,\Omega_X^{\otimes m})$, we obtain that the value at $(v'')^i$, $1\le i < \frac{n'}{2}$, of the Brauer character of the direct sum in Equation (\ref{eq:Vstable}) is equal to
\begin{equation}
\label{eq:VBrauer1}
(\beta-\widetilde{\beta}^{(V)})((v'')^i) = \delta_0\sum_{a=0}^{n'-1} 3^{n-1}\xi_{n'}^{ia} + \delta_1\sum_{a=0}^{n'-1} 2\cdot 3^{n-1}\xi_{n'}^{ia} = 0.
\end{equation}
Here, the last equality follows since $\xi_{n'}^i$ is an $(n')^{\mathrm{th}}$ root of unity, and hence $\sum_{a=0}^{n'-1}\xi_{n'}^{ia}=0$ for all $1\le i < \frac{n'}{2}$. To find $\widetilde{\beta}^{(V)}((v'')^i)$, we need to determine $n_{a,3^n}^{(V)}$ for $0\le a\le n'-1$. Since the Brauer character of $kV$ has zero value at all non-identity elements of $V$, it follows from Equations (\ref{eq:naj}) and (\ref{eq:nab}) that it suffices to determine $n_{a,3^n}^{(V)}-n_2(V)$. Using that, for $\epsilon=\epsilon'$ and $i=1,2$, $\mu_{a,-1}(y(z_i))=\mu_{a,1}(y(z_i))=1$ if and only if $a$ is odd, we get,  for $0\le a\le n'-1$,
$$n_{a,3^n}^{(V)}-n_2(V) = \left\{\begin{array}{cl}
2\left(\delta_m-\frac{1}{2}\right),&\mbox{$\epsilon=\epsilon'$ and $a$ odd},\\
0,&\mbox{$\epsilon=-\epsilon'$ or $a$ even}.
\end{array}\right.$$
If $\epsilon=\epsilon'$ then $n'$ is even, and hence $\lfloor\frac{n'}{2}-1\rfloor=\frac{n'}{2}-1$ and $\xi_{n'}^2$ is a primitive $(\frac{n'}{2})^{\mathrm{th}}$ root of unity.
Therefore, we obtain, for all $\epsilon,\epsilon'\in\{\pm 1\}$ and all $1\le i < \frac{n'}{2}$,
\begin{equation}
\label{eq:VBrauer2}
\widetilde{\beta}^{(V)}((v'')^i) =\delta_{\epsilon,\epsilon'} \,(2\delta_m-1)\sum_{t=0}^{\lfloor\frac{n'}{2}-1\rfloor} 3^n\xi_{n'}^{i(2t+1)} = 
\delta_{\epsilon,\epsilon'} \,(2\delta_m-1)\,3^n\,\xi_{n'}^i\sum_{t=0}^{\lfloor\frac{n'}{2}-1\rfloor} (\xi_{n'}^2)^{it} = 0.
\end{equation}


\subsubsection{The stable $k\Delta_1$- and $k\Delta_2$-module structures and the values of the Brauer character at elements of order $2$}
\label{sss:Delta}

Let $\Gamma\in \{\Delta_1,\Delta_2\}$, where we only consider $\Gamma=\Delta=\Delta_1$ if $\epsilon=-\epsilon'$. There are precisely two isomorphism classes of simple $k\Gamma$-modules. As in \cite[\S6.2]{BleherChinburgKontogeorgis2020}, we denote representatives of these by $S_a^{(\Gamma)}$ for $a\in\{0,1\}$. Similarly to Notation \ref{not:main3ind}(b), we write $U_{a,b}^{(\Gamma)}$ for an indecomposable $k\Gamma$-module whose socle is isomorphic to $S_a^{(\Gamma)}$ and whose $k$-dimension is equal to $b$ with $1\le b\le 3^n$. The Galois cover $\lambda: Y=X/I \to Z=X/\Gamma$ is tamely ramified with Galois group $\overline{\Gamma}=\Gamma/I$. We now apply Theorem \ref{thm:main3} to $E=mK_X$ and the group $\Gamma$, where we will write $n_j(\Gamma)$ for the integers $n_j$ from Equation (\ref{eq:nj}) and $n_{a,b}^{(\Gamma)}$ for the integers $n_{a,b}$ from Equation (\ref{eq:nab}). Note that $n_I=1$ since $\#(I\cap \Gamma)=3$. As in Table \ref{tab:canonicalmodular},  $Z_{\mathrm{br}}=\{z_1,\ldots,z_{(\ell-\epsilon')/2}\}$. Define
\begin{equation}
\label{eq:i0}
i_0=\left\{\begin{array}{ll}1,&\epsilon=-\epsilon',\Gamma=\Delta=\Delta_1,\\
2, & \epsilon=\epsilon',\Gamma=\Delta_1,\\
0, & \epsilon=\epsilon',\Gamma=\Delta_2.\end{array}\right.
\end{equation}
By Table \ref{tab:canonicalmodular}, there are precisely $i_0$ points among the points $y_{t,i}$ ($1\le t\le n', 1\le i \le 3^{n-1}$) on $Y$ occurring in Equation (\ref{eq:Djmodular}) that lie above points in $Z_{\mathrm{br}}$. Hence, we obtain by Table \ref{tab:canonicalmodular} that
$$\mathrm{ord}_{y(z_i)}(mK_Y+D_j)=\left\{\begin{array}{cl}m+ \lfloor \frac{4m-j}{3}\rfloor,&1\le i \le i_0,\\7m,&i_0+1\le i\le (\ell-\epsilon')/2.\end{array}\right. $$
By Equation (\ref{eq:Djcoefficient}), it follows that
\begin{equation}
\label{eq:ellz1}
\ell_{y(z_i),j} = \left\{\begin{array}{cl}
1-\delta_m, & (i_m,j)\in\{(0,1),(0,2), (1,0),(1,1)\}\mbox{ and } 1\le i\le i_0,\\
\delta_m, & \mbox{otherwise.}
\end{array}\right.
\end{equation}
Using the alternative formula for $n_j(\Gamma)$ from Remark \ref{rem:combine}(a), together with Equations (\ref{eq:Djmodulardegree}) and (\ref{eq:ellz1}), we have, for $j\in\{0,1\}$,
\begin{eqnarray*}
n_j(\Gamma)-n_{j+1}(\Gamma) 
&=&\frac{\mathrm{deg}(D_j)-\mathrm{deg}(D_{j+1})}{2\cdot 3^{n-1}}-
\sum_{i=1}^{i_0}\frac{\ell_{y(z_i),j}-\ell_{y(z_i),j+1}}{2}\\
&=&\left\{\begin{array}{cl}
\displaystyle \frac{n'+i_0(1-2\delta_m)}{2},&(i_m,j)=(0,0),\\[1.5ex]
\displaystyle \frac{n'-i_0(1-2\delta_m)}{2},&(i_m,j)=(1,1),\\[1.5ex]
0\,&\mbox{otherwise}.
\end{array}\right.
\end{eqnarray*}
Using Equation (\ref{eq:ellz1}), and that $\mu_{0,-1}(y(z_i)) = 0$ and $\mu_{1,-1}(y(z_i)) = 1$ for $1\le i\le i_0$, we obtain, for $j\in\{0,1\}$,
\begin{eqnarray*}
\sum_{z\in Z_{\mathrm{br}}}\left(\sum_{d=1+\mathrm{min}\{\ell_{y(z_i),j},\ell_{y(z_i),j+1}\}}^{\mathrm{max}\{\ell_{y(z_i),j},\ell_{y(z_i),j+1}\}}  \epsilon_{z_i,j} \,\mu_{a,-d}(y(z_i))\right) &=&
\left\{\begin{array}{rl}
-i_0(1-2\delta_m), & \mbox{$(i_m,j)=(0,0)$ and $a=1$},\\
i_0(1-2\delta_m), & \mbox{$(i_m,j)=(1,1)$ and $a=1$},\\
\multicolumn{1}{c}{0,}&\mbox{otherwise}.\end{array}\right.
\end{eqnarray*}
By Equation (\ref{eq:nab}) and Remark \ref{rem:computation}, it therefore follows that the non-projective indecomposable direct $k\Gamma$-module summands of $\mathrm{Res}^G_\Gamma\HH^0(X,\Omega_X^{\otimes m})$, with their multiplicities, are given by the direct sum
\begin{eqnarray}
\label{eq:Deltastable}
&&\delta_0\left(\frac{n'+i_0(1-2\delta_m)}{2}U_{0,3^{n-1}}^{(\Gamma)}
\oplus \frac{n'-i_0(1-2\delta_m)}{2}U_{1,3^{n-1}}^{(\Gamma)}\right)\\
\nonumber
&\oplus&\delta_1\left(\frac{n'-i_0(1-2\delta_m)}{2}U_{0,2\cdot 3^{n-1}}^{(\Gamma)}
\oplus \frac{n'+i_0(1-2\delta_m)}{2}U_{1,2\cdot 3^{n-1}}^{(\Gamma)}\right)
\end{eqnarray}
where $\delta_0,\delta_1,\delta_m$ are as in Notation \ref{not:modular3} and $i_0$ is as in Equation (\ref{eq:i0}).

Similarly to Equations (\ref{eq:VBrauer1}) and (\ref{eq:VBrauer2}), we now use Equation (\ref{eq:Deltastable}) to determine the values of the Brauer character $\beta$ of $\HH^0(X,\Omega_X^{\otimes m})$ at elements of order 2 in $\Gamma$. Since, by \cite[\S II.8]{Huppert1967}, all elements in $G$ of order 2 are conjugate to the element $s\in \Delta_1$ from Remark \ref{rem:modular3}, we only have to consider the element $s$ and $\Gamma= \Delta_1$. Because the order of $s$ is $2$, $s$ acts on $S_0^{(\Delta_1)}$ (resp. $S_1^{(\Delta_1)}$) as multiplication by $1$ (resp. $-1$). Applying Notation \ref{not:main3ind}(c) to the group $\Delta_1$, it follows that, for $a\in\{0,1\}$, each $U_{a,b}^{(\Delta_1)}$ has $\lfloor \frac{b+1}{2}\rfloor$ composition factors that are isomorphic to $S_a^{(\Delta_1)}$ and $\lfloor \frac{b}{2}\rfloor$ composition factors that are isomorphic to $S_{1-a}^{(\Delta_1)}$. Therefore, letting $\widetilde{\beta}^{(\Delta_1)}$ be the Brauer character of the maximal projective direct $k\Delta_1$-module summand of $\mathrm{Res}^G_{\Delta_1}\HH^0(X,\Omega_X^{\otimes m})$, the value at $s$ of the Brauer character of the direct sum in Equation (\ref{eq:Deltastable}) is equal to
\begin{equation}
\label{eq:sBrauer1}
(\beta-\widetilde{\beta}^{(\Delta_1)})(s) = \delta_0\left(\frac{n'+i_0(1-2\delta_m)}{2}-\frac{n'-i_0(1-2\delta_m)}{2}\right) + \delta_1(0+0) = \delta_0i_0(1-2\delta_m).
\end{equation}
To find $\widetilde{\beta}^{(\Delta_1)}(s)$, we need to determine $n_{a,3^n}^{(\Delta_1)}$ for $a\in\{0,1\}$. Since the Brauer character of $k\Delta_1$ has zero value at all non-identity elements of $\Delta_1$, it follows from Equations (\ref{eq:naj}) and (\ref{eq:nab}) that it suffices to determine $n_{a,3^n}^{(\Delta_1)}-n_2(\Delta_1)$. Using that, for $1\le i \le \frac{\ell-\epsilon'}{2}$, $\mu_{a,-1}(y(z_i))=\mu_{a,1}(y(z_i))=1$ if and only if $a=1$, we get
$$n_{a,3^n}^{(\Delta_1)}-n_2(\Delta_1) = \left\{\begin{array}{cl}
\displaystyle (1-2\delta_m)\left(\delta_0 i_0-\frac{\ell-\epsilon'}{4}\right),&a=1,\\[2ex]
0,&a=0.
\end{array}\right. $$
Therefore, we get, since $3^n$ is odd,
\begin{equation}
\label{eq:sBrauer2}
\widetilde{\beta}^{(\Delta_1)}(s) =-(1-2\delta_m)\left(\delta_0 i_0-\frac{\ell-\epsilon'}{4}\right).
\end{equation}


\subsubsection{The stable $kN_1$-module structure when $\epsilon=-\epsilon'$}
\label{sss:N1_mixed}

We use the notation from \cite[\S6.2.1]{BleherChinburgKontogeorgis2020} for the isomorphism classes of indecomposable $kN_1$-modules. Since $P$ is a normal Sylow $3$-subgroup of $N_1$, all simple $kN_1$-modules are inflated from simple $kN_1/P$-modules. Moreover, $N_1/P$ is isomorphic to a dihedral group of order $2 n'$, which is not divisible by $3$. Hence we can use ordinary character theory to see the following. There are $2+\frac{n'-1}{2}$ isomorphism classes of simple $kN_1$-modules, represented by 2 one-dimensional $kN_1$-modules $S_i^{(N_1)}$, for $i\in\{0,1\}$, such that $S_i^{(N_1)}$ restricts to $S_i^{(\Delta)}$ and to $S_0^{(V)}$, together with $\frac{n'-1}{2}$ two-dimensional simple $kN_1$-modules $\widetilde{S}_t^{(N_1)}$, for $1\le t \le \frac{n'-1}{2}$, such that $\widetilde{S}_t^{(N_1)}$ restricts to $S_t^{(V)}\oplus S_{n'-t}^{(V)}$ and to $S_0^{(\Delta)}\oplus S_1^{(\Delta)}$. For $i\in\{0,1\}$, we write $U_{i,b}^{(N_1)}$ for an indecomposable $kN_1$-module whose socle is isomorphic to $S_i^{(N_1)}$ and whose $k$-dimension is equal to $b$ with $1\le b\le 3^n$. For $t\in\{1,\ldots,\frac{n'-1}{2}\}$, we write $\widetilde{U}_{t,b}^{(N_1)}$ for  an indecomposable $kN_1$-module whose socle is isomorphic to $\widetilde{S}_t^{(N_1)}$ and whose $k$-dimension is equal to $2b$ with $1\le b\le 3^n$. 

By \S\ref{sss:V} and by \S\ref{sss:Delta} for $\Gamma=\Delta$, we can write the non-projective indecomposable direct summands of $\mathrm{Res}_{N_1}^G\,\HH^0(X,\Omega_X^{\otimes m})$, with their multiplicities, as a direct sum
\begin{equation}
\label{eq:whyohwhy1}
\bigoplus_{i\in\{0,1\}} \bigoplus_{j\in\{0,1\}} n_{i,(j+1)3^{n-1}}^{(N_1)} U_{i,(j+1)3^{n-1}}^{(N_1)} 
\;\oplus\; \bigoplus_{t=1}^{(n'-1)/2} \bigoplus_{j\in\{0,1\}} \widetilde{n}_{t,(j+1)3^{n-1}}^{(N_1)}\widetilde{U}_{t,(j+1)3^{n-1}}^{(N_1)}
\end{equation}
where we need to determine the coefficients. 
Let $j\in\{0,1\}$. Restricting Equation (\ref{eq:whyohwhy1}) to $V$, Equation (\ref{eq:Vstable}) leads to the conditions
\begin{equation}
\label{eq:restrictV}
\left\{\begin{array}{ccl}
n_{0,(j+1)3^{n-1}}^{(N_1)} + n_{1,(j+1)3^{n-1}}^{(N_1)} &=& \delta_0 \delta_{j,0} + \delta_1 \delta_{j,1},\\
\widetilde{n}_{t,(j+1)3^{n-1}}^{(N_1)} &=& \delta_0 \delta_{j,0} + \delta_1 \delta_{j,1}\qquad
\mbox{for }t\in\{1,\ldots,\frac{n'-1}{2}\}.
\end{array}\right.
\end{equation}
On the other hand, restricting Equation (\ref{eq:whyohwhy1}) to $\Delta$, Equation (\ref{eq:Deltastable}), with $i_0=1$, leads to the conditions
\begin{equation}
\label{eq:restrictDelta}
\left\{\begin{array}{ccl}
\displaystyle n_{0,(j+1)3^{n-1}}^{(N_1)} + \sum_{t=1}^{(n'-1)/2} \widetilde{n}_{t,(j+1)3^{n-1}}^{(N_1)} &=&
\displaystyle\delta_0 \delta_{j,0} \frac{n'+(1-2\delta_m)}{2}+ \delta_1 \delta_{j,1} \frac{n'-(1-2\delta_m)}{2},\\[2ex]
\displaystyle n_{1,(j+1)3^{n-1}}^{(N_1)} + \sum_{t=1}^{(n'-1)/2} \widetilde{n}_{t,(j+1)3^{n-1}}^{(N_1)} &=&
\displaystyle\delta_0 \delta_{j,0} \frac{n'-(1-2\delta_m)}{2}+ \delta_1 \delta_{j,1} \frac{n'+(1-2\delta_m)}{2}.
\end{array}\right.
\end{equation}
Using Equations (\ref{eq:restrictV}) and (\ref{eq:restrictDelta}), for $j\in\{0,1\}$, it follows that the non-projective indecomposable direct summands of $\mathrm{Res}_{N_1}^G\,\HH^0(X,\Omega_X^{\otimes m})$, with their multiplicities, are given by the direct sum
\begin{eqnarray}
\label{eq:Greenmixed1}
&& \delta_0\left((1-\delta_m)\,U_{0,3^{n-1}}^{(N_1)}\oplus \delta_m\,U_{1,3^{n-1}}^{(N_1)} \oplus \bigoplus_{t=1}^{(n'-1)/2} \widetilde{U}_{t,3^{n-1}}^{(N_1)}\right)\\
\nonumber
&\oplus&\delta_1\left(\delta_m\,U_{0,2\cdot 3^{n-1}}^{(N_1)}\oplus (1-\delta_m)\,U_{1,2\cdot 3^{n-1}}^{(N_1)} \oplus \bigoplus_{t=1}^{(n'-1)/2} \widetilde{U}_{t,2\cdot3^{n-1}}^{(N_1)}\right)
\end{eqnarray}
where $\delta_0,\delta_1,\delta_m$ are as in Notation \ref{not:modular3}.


\subsubsection{The stable $kN_1$-module structure when $\epsilon=\epsilon'$}
\label{sss:N1_equal}

We use the notation from \cite[\S6.2.2]{BleherChinburgKontogeorgis2020} for the isomorphism classes of indecomposable $kN_1$-modules. Arguing similarly to \S\ref{sss:N1_mixed}, there are $4+(\frac{n'}{2}-1)$ isomorphism classes of simple $kN_1$-modules, represented by 4 one-dimensional $kN_1$-modules $S_{i_1,i_2}^{(N_1)}$, for $i_1,i_2\in\{0,1\}$, such that $S_{i_1,i_2}^{(N_1)}$ restricts to $S_{i_1}^{(\Delta_1)}$ and to $S_{i_2}^{(\Delta_2)}$ and to $S_0^{(V)}$ if $i_1=i_2$ and to $S_{n'/2}^{(V)}$ if $i_1\ne i_2$, together with $(\frac{n'}{2}-1)$ two-dimensional simple $kN_1$-modules $\widetilde{S}_t^{(N_1)}$, for $1\le t \le (\frac{n'}{2}-1)$, such that $\widetilde{S}_t^{(N_1)}$ restricts to $S_t^{(V)}\oplus S_{n'-t}^{(V)}$ and to $S_0^{(\Delta_1)}\oplus S_1^{(\Delta_1)}$ and to $S_0^{(\Delta_2)}\oplus S_1^{(\Delta_2)}$. For $i_1,i_2\in\{0,1\}$, we write $U_{i_1,i_2,b}^{(N_1)}$ for an indecomposable $kN_1$-module whose socle is isomorphic to $S_{i_1,i_2}^{(N_1)}$ and whose $k$-dimension is equal to $b$ with $1\le b\le 3^n$. For $t\in\{1,\ldots,(\frac{n'}{2}-1)\}$, we write $\widetilde{U}_{t,b}^{(N_1)}$ for  an indecomposable $kN_1$-module whose socle is isomorphic to $\widetilde{S}_t^{(N_1)}$ and whose $k$-dimension is equal to $2b$ with $1\le b\le 3^n$.  By \S\ref{sss:V} and by \S\ref{sss:Delta} for $\Gamma\in\{\Delta_1,\Delta_2\}$, we obtain, using similar arguments as in \S\ref{sss:N1_mixed}, that the non-projective indecomposable direct summands of $\mathrm{Res}_{N_1}^G\,\HH^0(X,\Omega_X^{\otimes m})$, with their multiplicities, are given by the direct sum
\begin{eqnarray}
\label{eq:Greenequal3}
&& \delta_0\left((1-\delta_m)\left(U_{0,0,3^{n-1}}^{(N_1)}\oplus U_{0,1,3^{n-1}}^{(N_1)}\right)
\oplus \delta_m\left(U_{1,1,3^{n-1}}^{(N_1)} \oplus U_{1,0,3^{n-1}}^{(N_1)}\right)
\oplus\; \bigoplus_{t=1}^{(n'/2-1)} \widetilde{U}_{t,3^{n-1}}^{(N_1)}\right) \\
\nonumber
&\oplus&\delta_1\left(\delta_m\left(U_{0,0,2\cdot 3^{n-1}}^{(N_1)}\oplus U_{0,1,2\cdot 3^{n-1}}^{(N_1)} \right)
\oplus (1-\delta_m)\left(U_{1,1,2\cdot 3^{n-1}}^{(N_1)} \oplus U_{1,0,2\cdot 3^{n-1}}^{(N_1)}\right) 
\oplus\; \bigoplus_{t=1}^{(n'/2-1)} \widetilde{U}_{t,2\cdot3^{n-1}}^{(N_1)}\right).
\end{eqnarray}
where $\delta_0,\delta_1,\delta_m$ are as in Notation \ref{not:modular3}.


\subsection{The stable $kG$-module structure of the holomorphic poly-differentials}
\label{ss:Greencorrespondence} 

We use the same strategy as in \cite[\S6.4]{BleherChinburgKontogeorgis2020}, i.e. we determine the non-projective indecomposable $kG$-modules that are direct summands of $\HH^0(X,\Omega_X^{\otimes m})$, together with their multiplicities. These $kG$-modules are precisely the Green correspondents of the non-projective indecomposable direct $kN_1$-module summands of $\mathrm{Res}^G_{N_1}\HH^0(X,\Omega_X^{\otimes m})$. The two main ingredients we use to find these Green correspondents are  \S \ref{sss:N1_mixed}-\ref{sss:N1_equal}, together with \cite[\S III-\S VI]{Burkhardt1976}. 

As in \cite[\S6.4]{BleherChinburgKontogeorgis2020}, it is important that there is a stable equivalence between the module categories of $kG$ and $kN_1$, which allows us to use the results from \cite[\S X.1]{AuslanderReitenSmalo1997} on almost split sequences to be able to detect the Green correspondents. More precisely, we first identify the indecomposable $kG$-modules that are the Green correspondents of the simple $kN_1$-modules, since each of these lies at the end of the component of the stable Auslander-Reiten quiver to which it belongs. We then follow the irreducible homomorphisms in the appropriate component until we reach the Green correspondent of each of the non-projective indecomposable direct $kN_1$-module summands of $\mathrm{Res}^G_{N_1}\HH^0(X,\Omega_X^{\otimes m})$. As in \cite[\S6.4]{BleherChinburgKontogeorgis2020}, we have to consider four cases.

The main difference when $m>1$ is that the indecomposable non-projective $kG$-modules that occur as direct summands of $\HH^0(X,\Omega_X^{\otimes m})$ depend on the congruence class of $m$ modulo 6. Moreover, if $\ell\equiv -1 \mod 3$ then there are indecomposable non-projective $kG$-modules that are not uniserial and that occur as direct summands of $\HH^0(X,\Omega_X^{\otimes m})$ (see Notation \ref{not:PSL}(b)).


\subsubsection{The stable $kG$-module structure when $\epsilon=-\epsilon'$ and $\epsilon'=1$}
\label{sss:Green_mixed1}

This is the case when $\ell\equiv 1\mod 4$ and $\ell\equiv -1\mod 3$. By \S\ref{sss:N1_mixed}, the non-projective indecomposable direct summands of $\mathrm{Res}_{N_1}^G\,\HH^0(X,\Omega_X^{\otimes m})$, with their multiplicities, are those in the direct sum in Equation (\ref{eq:Greenmixed1}). As recorded in  \cite[\S 6.4.1]{BleherChinburgKontogeorgis2020}, it follows from \cite[\S IV]{Burkhardt1976} that there are $1+(n'-1)/2$ blocks of $kG$ of maximal defect $n$, consisting of the principal block $B_0$ and $(n'-1)/2$ blocks $B_1,\ldots,B_{(n'-1)/2}$, and there are $1+(\ell-1)/4$ blocks of $kG$ of defect 0. There are precisely two isomorphism classes of simple $kG$-modules that belong to $B_0$, represented by the trivial simple $kG$-module $T_0$ and a simple $kG$-module $\widetilde{T}_0$ of $k$-dimension $\ell-1$. For each $t\in\{1,\ldots,(n'-1)/2\}$, there is precisely one isomorphism class of simple $kG$-modules belonging to $B_t$, represented by a simple $kG$-module $\widetilde{T}_t$ of $k$-dimension $\ell-1$. 

Using Notation \ref{not:PSL}, it follows, as in \cite[\S 6.4.1]{BleherChinburgKontogeorgis2020}, that the Green correspondent of $S_0^{(N_1)}$ is $T_0$, the Green correspondent of  $S_1^{(N_1)}$ is $U_{\widetilde{T}_0,(3^n-1)/2}$, and the Green correspondent of $\widetilde{S}_t^{(N_1)}$ is $U_{\widetilde{T}_t,3^n-1}$, for $1\le t\le (n'-1)/2$. Following the irreducible homomorphisms in the stable Auslander-Reiten quiver of $B_0$ (resp. $B_t$, for $1\le t\le (n'-1)/2$), we obtain the indecomposable $kG$-modules that are the Green correspondents of the indecomposable $kN_1$-modules occurring in Equation (\ref{eq:Greenmixed1}). Because of our assumptions on $\epsilon$ and $\epsilon'$, the Green correspondents of $U_{0,3^{n-1}}^{(N_1)}$, $U_{0,2\cdot 3^{n-1}}^{(N_1)}$ and $U_{1,2\cdot 3^{n-1}}^{(N_1)}$  are indecomposable $kG$-modules that are not uniserial. More precisely, using Notation \ref{not:PSL}, the Green correspondent of $U_{0,3^{n-1}}^{(N_1)}$ is $U_{T_0,T_0,3^{n-1}+1}^{(G)}$, the Green correspondent of $U_{0,2\cdot 3^{n-1}}^{(N_1)}$ is $U_{T_0,\widetilde{T}_0,(3^{n-1}+1)/2}^{(G)}$, and the Green correspondent of $U_{1,2\cdot 3^{n-1}}^{(N_1)}$ is $U_{\widetilde{T}_0,T_0,(3^{n-1}+1)/2}^{(G)}$. Therefore, we obtain that  the non-projective indecomposable direct $kG$-module summands of $\HH^0(X,\Omega_X^{\otimes m})$, with their multiplicities, are as stated in part (i)(1) of Theorem \ref{thm:modularresult}.
It follows that for every possible value of $\delta_0,\delta_1,\delta_m$, the $kG$-module $\HH^0(X,\Omega_X^{\otimes m})$ has, for each block $B$ of $kG$, at most one non-projective indecomposable $kG$-module belonging to $B$ as a direct summand.


\subsubsection{The stable $kG$-module structure when $\epsilon=-\epsilon'$ and $\epsilon'=-1$}
\label{sss:Green_mixed2}

This is the case when $\ell\equiv -1\mod 4$ and $\ell\equiv 1\mod 3$. By \S\ref{sss:N1_mixed}, the non-projective indecomposable direct summands of $\mathrm{Res}_{N_1}^G\,\HH^0(X,\Omega_X^{\otimes m})$, with their multiplicities, are again those in the direct sum in Equation (\ref{eq:Greenmixed1}). As recorded in  \cite[\S 6.4.2]{BleherChinburgKontogeorgis2020}, it follows from \cite[\S V]{Burkhardt1976} that there are $1+(n'-1)/2$ blocks of $kG$ of maximal defect $n$, consisting of the principal block $B_0$ and $(n'-1)/2$ blocks $B_1,\ldots,B_{(n'-1)/2}$, and there are $1+(\ell+1)/4$ blocks of $kG$ of defect 0. There are precisely two isomorphism classes of simple $kG$-modules that belong to $B_0$, represented by the trivial simple $kG$-module $T_0$ and a simple $kG$-module $T_1$ of $k$-dimension $\ell$. For each $t\in\{1,\ldots,(n'-1)/2\}$, there is precisely one isomorphism class of simple $kG$-modules belonging to $B_t$, represented by a simple $kG$-module $\widetilde{T}_t$ of $k$-dimension $\ell+1$.

As in \cite[\S 6.4.2]{BleherChinburgKontogeorgis2020}, it follows that the Green correspondent of $S_0^{(N_1)}$ is $T_0$, the Green correspondent of $S_1^{(N_1)}$ is $T_1$, and the Green correspondent of $\widetilde{S}_t^{(N_1)}$ is $\widetilde{T}_t$, for $1\le t\le (n'-1)/2$. Following the irreducible homomorphisms in the stable Auslander-Reiten quiver of $B_0$ (resp. $B_t$, for $1\le t\le (n'-1)/2$), we obtain the indecomposable $kG$-modules that are the Green correspondents of the indecomposable $kN_1$-modules occurring in Equation (\ref{eq:Greenmixed1}). Because of our assumptions on $\epsilon$ and $\epsilon'$, all indecomposable $kG$-modules are uniserial. Hence we obtain that the non-projective indecomposable direct $kG$-module summands of $\HH^0(X,\Omega_X^{\otimes m})$, with their multiplicities, are as stated in part (i)(2) of Theorem \ref{thm:modularresult}.
It follows that for every possible value of $\delta_0,\delta_1,\delta_m$, the $kG$-module $\HH^0(X,\Omega_X^{\otimes m})$ has, for each block $B$ of $kG$, at most one non-projective indecomposable $kG$-module belonging to $B$ as a direct summand.


\subsubsection{The stable $kG$-module structure when $\epsilon=\epsilon'$ and $\epsilon'=1$}
\label{sss:Green_equal3}

This is the case when $\ell\equiv 1\mod 4$ and $\ell\equiv 1\mod 3$. By \S\ref{sss:N1_equal}, the non-projective indecomposable direct summands of $\mathrm{Res}_{N_1}^G\,\HH^0(X,\Omega_X^{\otimes m})$, with their multiplicities, are those in the direct sum in Equation (\ref{eq:Greenequal3}). As recorded in  \cite[\S 6.4.3]{BleherChinburgKontogeorgis2020}, it follows from \cite[\S III]{Burkhardt1976} that there are $1+(n'/2)$ blocks of $kG$ of maximal defect $n$, consisting of the principal block $B_{00}$, another block $B_{01}$, and $(n'/2-1)$ blocks $B_1,\ldots,B_{(n'/2-1)}$, and there are $(\ell-1)/4$ blocks of $kG$ of defect 0. There are precisely two isomorphism classes of simple $kG$-modules that belong to $B_{00}$ (resp. $B_{01}$), represented by the trivial simple $kG$-module $T_0$ and a simple $kG$-module $T_1$ of $k$-dimension $\ell$ (resp. by two simple $kG$-modules $T_{0,1}$ and $T_{1,0}$ of $k$-dimension $(\ell+1)/2$). For each $t\in\{1,\ldots,(n'/2-1)\}$, there is precisely one isomorphism class of simple $kG$-modules belonging to $B_t$, represented by a simple $kG$-module $\widetilde{T}_t$ of $k$-dimension $\ell+1$.

As in \cite[\S 6.4.3]{BleherChinburgKontogeorgis2020}, it follows that the Green correspondent of $S_{0,0}^{(N_1)}$ is $T_0$, the Green correspondent of $S_{1,1}^{(N_1)}$ is $T_1$, and the Green correspondent of $\widetilde{S}_t^{(N_1)}$ is $\widetilde{T}_t$, for $1\le t\le (n'/2-1)$. On the other hand, the Green correspondent of $S_{0,1}^{(N_1)}$ is one of $T_{0,1}$ or $T_{1,0}$. We relabel the simple $kG$-modules, if necessary, to be able to assume that the Green correspondent of $S_{0,1}^{(N_1)}$ is $T_{0,1}$ and the Green correspondent of $S_{1,0}^{(N_1)}$ is $T_{1,0}$. Because of our assumptions on $\epsilon$ and $\epsilon'$, all indecomposable $kG$-modules are uniserial. Using similar arguments as in \S\ref{sss:Green_mixed2}, we obtain that the non-projective indecomposable direct $kG$-module summands of $\HH^0(X,\Omega_X^{\otimes m})$, with their multiplicities, are as stated in part (i)(3) of Theorem \ref{thm:modularresult}.
It follows that for every possible value of $\delta_0,\delta_1,\delta_m$, the $kG$-module $\HH^0(X,\Omega_X^{\otimes m})$ has, for each block $B$ of $kG$, at most one non-projective indecomposable $kG$-module belonging to $B$ as a direct summand.


\subsubsection{The stable $kG$-module structure when $\epsilon=\epsilon'$ and $\epsilon'=-1$}
\label{sss:Green_equal4}

This is the case when $\ell\equiv -1\mod 4$ and $\ell\equiv -1\mod 3$. By \S\ref{sss:N1_equal}, the non-projective indecomposable direct summands of $\mathrm{Res}_{N_1}^G\,\HH^0(X,\Omega_X^{\otimes m})$, with their multiplicities, are again those in the direct sum in Equation (\ref{eq:Greenequal3}). As recorded in  \cite[\S 6.4.4]{BleherChinburgKontogeorgis2020}, it follows from \cite[\S VI]{Burkhardt1976} that there are $1+(n'/2)$ blocks of $kG$ of maximal defect $n$, consisting of the principal block $B_{00}$, another block $B_{01}$, and $(n'/2-1)$ blocks $B_1,\ldots,B_{(n'/2-1)}$, and there are $(\ell-3)/4$ blocks of $kG$ of defect 0. There are precisely two isomorphism classes of simple $kG$-modules that belong to $B_{00}$ (resp. $B_{01}$), represented by the trivial simple $kG$-module $T_0$ and a simple $kG$-module $\widetilde{T}_0$ of $k$-dimension $\ell-1$ (resp. by two simple $kG$-modules $T_{0,1}$ and $T_{1,0}$ of $k$-dimension $(\ell-1)/2$). For each $t\in\{1,\ldots,(n'/2-1)\}$, there is precisely one isomorphism class of simple $kG$-modules belonging to $B_t$, represented by a simple $kG$-module $\widetilde{T}_t$ of $k$-dimension $\ell-1$. 

Using Notation \ref{not:PSL}, it follows, as in \cite[\S 6.4.4]{BleherChinburgKontogeorgis2020}, that the Green correspondent of $S_{0,0}^{(N_1)}$ is $T_0$, the Green correspondent of $S_{1,1}^{(N_1)}$ is $U_{\widetilde{T}_0,(3^n-1)/2}$, and the Green correspondent of $\widetilde{S}_t^{(N_1)}$ is $U_{\widetilde{T}_t,3^n-1}$, for $1\le t\le (n'/2-1)$. On the other hand, the Green correspondent of $S_{0,1}^{(N_1)}$ is a uniserial $kG$-module of length $3^n-1$ whose socle is isomorphic to either $T_{0,1}$ or $T_{1,0}$. We relabel the simple $kG$-modules, if necessary, to be able to assume that the Green correspondent of $S_{0,1}^{(N_1)}$ is $U_{T_{0,1},3^n-1}$ and the Green correspondent of $S_{1,0}^{(N_1)}$ is $U_{T_{1,0},3^n-1}$. Because of our assumptions on $\epsilon$ and $\epsilon'$, the Green correspondents of $U_{0,0,3^{n-1}}^{(N_1)}$, $U_{0,0,2\cdot 3^{n-1}}^{(N_1)}$ and $U_{1,1,2\cdot 3^{n-1}}^{(N_1)}$ are indecomposable $kG$-modules that are not uniserial. Similarly to \S\ref{sss:Green_mixed1}, we obtain, using Notation \ref{not:PSL}, that the Green correspondent of $U_{0,0,3^{n-1}}^{(N_1)}$ is $U_{T_0,T_0,3^{n-1}+1}^{(G)}$, the Green correspondent of $U_{0,0,2\cdot 3^{n-1}}^{(N_1)}$ is $U_{T_0,\widetilde{T}_0,(3^{n-1}+1)/2}^{(G)}$, and the Green correspondent of $U_{1,1,2\cdot 3^{n-1}}^{(N_1)}$ is $U_{\widetilde{T}_0,T_0,(3^{n-1}+1)/2}^{(G)}$. Using similar arguments as in \S\ref{sss:Green_mixed1}, we obtain that the non-projective indecomposable direct $kG$-module summands of $\HH^0(X,\Omega_X^{\otimes m})$, with their multiplicities, are as stated in part (i)(4) of Theorem \ref{thm:modularresult}.
It follows that for every possible value of $\delta_0,\delta_1,\delta_m$, the $kG$-module $\HH^0(X,\Omega_X^{\otimes m})$ has, for each block $B$ of $kG$, at most one non-projective indecomposable $kG$-module belonging to $B$ as a direct summand.


\subsection{The Brauer character of the holomorphic poly-differentials}
\label{ss:brauer}

We use the same strategy as in \cite[\S6.3]{BleherChinburgKontogeorgis2020} to compute the values of the Brauer character of $\HH^0(X,\Omega_X^{\otimes m})$ at all elements $g\in G$ that are 3-regular, i.e. whose order is not divisible by 3. 

The main difference when $m>1$ is that the values of the Brauer character of $\HH^0(X,\Omega_X^{\otimes m})$ at elements of $G$ of order $\ell$ depend on the congruence class of $m$ modulo $\ell$.


\subsubsection{The values of the Brauer character at elements of order $\ell$}
\label{sss:brauerell}
As recorded in \cite[\S6.3]{BleherChinburgKontogeorgis2020}, it follows from \cite[\S II.8]{Huppert1967} that the elements of order $\ell$ fall into 2 conjugacy classes. Moreover, we can choose representatives $r_1$ and $r_2$ of these classes in such a way that $R=\langle r_1\rangle=\langle r_2\rangle$ and $r_2=r_1^{\mu}$ where $\mu\in\{1,2,\ldots,\ell-1\}$ with $\mathbb{F}_\ell^*=\langle \mu\rangle$. 
Let $Y=X$ and $Z=X/R$, and let $\lambda:Y=X\to Z$ be the corresponding tamely ramified Galois cover with Galois group $R$. We apply Theorem \ref{thm:main3} to $E=mK_X$ and the group $R$. Since $\#(I\cap R)=1$, $n_I=0$ and $E_0=mK_X$ on $Y=X$. As in Table \ref{tab:canonicalmodular}, we write $Z_{\mathrm{br}}=\{z_1,\ldots,z_{(\ell-1)/2}\}$. Because $E_0=mK_X$, the information from this table implies, for $1\le i \le \frac{\ell-1}{2}$, 
$$\ell_{x(z_i),0} = \ell-1-m_\ell,\quad\mbox{where $m_\ell\in\{0,1,\ldots,\ell-1\}$ satisfies $m_\ell\equiv m-1 \mod\ell$}.$$
Since for all $1\le i \le \frac{\ell-1}{2}$, $R_{x(z_i)}=R$ and since $Y=X$,  Equations (\ref{eq:naj}) and  (\ref{eq:nab}) imply that the Brauer character of $\mathrm{Res}_R^G\,\HH^0(X,\Omega_X^{\otimes m})$ equals
\begin{equation}
\label{eq:RBrauer}
\sum_{i=1}^{(\ell-1)/2}\left(\sum_{d=1}^{\ell-1-m_\ell}\theta_{x(z_i)}^{-d} - \sum_{d=1}^{\ell-1} \,\frac{d}{\ell}\,\theta_{x(z_i)}^d\right) + n_0(R)\,\beta(kR).
\end{equation}
Since $\beta(kR)$ has zero value at all non-identity elements of $R$, we do not need to determine $n_0(R)$.

By \cite[\S II.8]{Huppert1967}, the normalizer $N_G(R)$ is a semidirect product with normal subgroup $R$ and cyclic quotient group of order $(\ell-1)/2$. This implies, by \cite[p. 193]{Moreno1993}, that there is precisely one point on $X/N_G(R)$ that lies below the points $z_1,\ldots,z_{(\ell-1)/2}$ of $Z=X/R$. Hence $N_G(R)$ permutes transitively the points $x(z_1),\ldots, x(z_{(\ell-1)/2})$ of $X$ lying above them. Let $\xi_\ell$ be a primitive $\ell^{\mathrm{th}}$ root of unity such that $\theta_{x(z_1)}(r_1)=\xi_\ell$. Since $\theta_{x(z_1).g}(r_1) = \theta_{x(z_1)}(g\,r_1g^{-1})$ for all $g\in N_G(R)$ and since, by \cite[\S II.8]{Huppert1967}, the conjugates of $r_1$ by $N_G(R)$ are precisely the elements $(r_1)^{a^2}$ for $1\le a\le (\ell-1)/2$, we obtain
\begin{equation}
\label{eq:arrgh!!}
\{\theta_{x(z_i)}(r_1)\;:\;1\le i \le (\ell-1)/2\} \;=\; \{ (\xi_\ell)^{a^2}:\; 1\le a \le (\ell-1)/2\}.
\end{equation}

Using Gauss sums, we see, similarly to \cite[\S6.3.1]{BleherChinburgKontogeorgis2020}, that there exists a choice of square root of $\epsilon' \ell$, say $\sqrt{\epsilon'\ell\,}$, such that, for all positive integers $d$,
\begin{equation}
\label{eq:Gauss1}
\sum_{a=1}^{(\ell-1)/2} (\xi_\ell)^{a^2d}=\frac{-1+\left(\frac{d}{\ell}\right)\sqrt{\epsilon'\ell\,}}{2}
\quad\mbox{ and } \quad
\sum_{a=1}^{(\ell-1)/2} (\xi_\ell)^{\mu a^2d}=\frac{-1-\left(\frac{d}{\ell}\right)\sqrt{\epsilon'\ell\,}}{2}
\end{equation}
where $\left(\frac{d}{\ell}\right)$ denotes the Legendre symbol. Using Equations (\ref{eq:arrgh!!}) and (\ref{eq:Gauss1}), we get 
\begin{equation}
\label{eq:ValueModular_r0}
\sum_{i=1}^{(\ell-1)/2}\;\sum_{d=1}^{\ell-1-m_\ell}\theta_{x(z_i)}^{-d}(r_1) =
-\frac{\ell-1}{2}+\frac{m_\ell -\sum_{d=1}^{m_\ell}\left(\frac{d}{\ell}\right)\sqrt{\epsilon'\ell\,}}{2}.
\end{equation}
Letting $h_\ell=h_{\mathbb{Q}(\sqrt{-\ell})}$ be the class number of $\mathbb{Q}(\sqrt{-\ell})$, it follows by \cite[\S6.3.1]{BleherChinburgKontogeorgis2020} that
\begin{equation}
\label{eq:ValueModular_r1}
\sum_{i=1}^{(\ell-1)/2} \left(-\sum_{d=0}^{\ell-1} \,\frac{d}{\ell}\,\theta_{x(z_i)}^d\right)(r_1)=
\left\{\begin{array}{ll}
\displaystyle \frac{\ell-1}{4}, &\epsilon'=1,\\[1ex]
\displaystyle \frac{\ell-1}{4}+\frac{h_\ell}{2}\sqrt{-\ell},& \epsilon'=-1.\end{array}
\right.
\end{equation}
Replacing $\xi_\ell$ by $\xi_\ell^\mu$ results in replacing $\sqrt{\epsilon'\ell\,}$ by $-\sqrt{\epsilon'\ell\,}$ in Equation (\ref{eq:Gauss1}), and hence also in Equation (\ref{eq:ValueModular_r0}). Running the computations in \cite[\S6.3.1]{BleherChinburgKontogeorgis2020} with $\xi_\ell^\mu$ instead of $\xi_\ell$, we see that this also results in replacing $\sqrt{-\ell\,}$ by $-\sqrt{-\ell\,}$ in Equation (\ref{eq:ValueModular_r1}) when $\epsilon'=-1$.
Since $\theta_{x(z_1)}(r_2)=\theta_{x(z_1)}(r_1^\mu)=\xi_\ell^\mu$, we therefore obtain from Equations (\ref{eq:RBrauer}), (\ref{eq:ValueModular_r0}) and (\ref{eq:ValueModular_r1}) that
\begin{equation}
\label{eq:BrauerValue_rb}
\beta(\HH^0(X,\Omega_X^{\otimes m}))(r_b) = \left\{\begin{array}{ll}
\displaystyle -\frac{\ell-1}{4}+\frac{m_\ell+(-1)^b\sum_{d=1}^{m_\ell}\left(\frac{d}{\ell}\right)\sqrt{\ell\,}}{2}, &\epsilon'=1,\\[1ex]
\displaystyle -\frac{\ell-1}{4}+\frac{m_\ell-(-1)^b\left(h_\ell-\sum_{d=1}^{m_\ell}\left(\frac{d}{\ell}\right)\right)\sqrt{-\ell\,}}{2},& \epsilon'=-1,\end{array}\right.
\end{equation}
for $b\in\{1,2\}$, where $m_\ell\in\{0,1,\ldots,\ell-1\}$ satisfies $m_\ell\equiv m-1 \mod\ell$. 


\subsubsection{The values of the Brauer character at all $3$-regular elements of $G$}
\label{sss:brauercharNotEll}

By Notation \ref{not:modular3} and Remark \ref{rem:modular3}, $v$ is an element of order $(\ell-\epsilon)/2 = 3^n\cdot n'$, where $n'$ is not divisible by 3, $s$ is an element of order 2, and $w$ is an element of order $(\ell+\epsilon)/2$. Let $v''=v^{3^n}$ be of order $n'$. By \cite[\S II.8]{Huppert1967}, a full set of representatives of the conjugacy classes of 3-regular elements of $G$ is given by Table \ref{tab:3regular}.
\renewcommand{\arraystretch}{1.25}
\begin{table}[ht]
\caption{Representatives of the $3$-regular conjugacy classes of $G=\mathrm{PSL}(2,\mathbb{F}_\ell)$ together with their class lengths.}
\label{tab:3regular}
$
\begin{array}{c||c|c|c|c|c}
\mbox{representative}& 1_G &r_b& s &(v'')^i&w^j 
\\ \hline
\mbox{class length}&1&\frac{\ell^2-1}{2}&\frac{\ell(\ell+\epsilon')}{2} & \ell(\ell+\epsilon)&\ell(\ell-\epsilon)
\end{array}$\\[1ex]
where $b\in\{1,2\}$, $1\le i< \frac{n'}{2}$, $1\le j< \frac{\ell+\epsilon}{4}$.
\end{table}
\renewcommand{\arraystretch}{1}

From Equation (\ref{eq:BrauerValue_rb}), we know the values of $\beta(\HH^0(X,\Omega_X^{\otimes m}))$ at $r_b$, for $b\in\{1,2\}$. The other values of $\beta(\HH^0(X,\Omega_X^{\otimes m}))$ are as follows:
\begin{eqnarray}
\label{eq:Br1}
\beta(\HH^0(X,\Omega_X^{\otimes m}))(1_G)&=&(2m-1)\,\frac{(\ell^2-1)(\ell-6)}{24},\\
\label{eq:Br2}
\beta(\HH^0(X,\Omega_X^{\otimes m}))(s)&=& (1- 2 \delta_m) \,\frac{\ell-\epsilon'}{4},\\
\label{eq:Br3}
\beta(\HH^0(X,\Omega_X^{\otimes m}))((v'')^i)&=&0,\\
\label{eq:Br4}
\beta(\HH^0(X,\Omega_X^{\otimes m}))(w^j) &=& 0.
\end{eqnarray}
where $1\le i< \frac{n'}{2}$ and $1\le j < \frac{\ell+\epsilon}{4}$. 

Note that we obtain Equation (\ref{eq:Br1}) by using the Riemann-Roch theorem, together with the formula for the genus $g(X)$ from \cite[Cor. 3.2]{BendingCaminaGuralnick2005}. Equation (\ref{eq:Br2}) follows from Equations (\ref{eq:sBrauer1}) and (\ref{eq:sBrauer2}), and Equation (\ref{eq:Br3}) follows from Equations (\ref{eq:VBrauer1}) and (\ref{eq:VBrauer2}).

For Equation (\ref{eq:Br4}), we consider the cyclic subgroup $W=\langle w\rangle$ of $G$. Since $\#W=(\ell+\epsilon)/2$ is not divisible by 3, there are $\#W$ isomorphism classes of indecomposable $kW$-modules, which are all simple of $k$-dimension one. We denote representatives of these by $S_a^{(W)}$, for $0\le a\le \#W-1$. 
Let $Y=X$ and $Z=X/W$, and let $\lambda:Y=X\to Z$ be the corresponding tamely ramified Galois cover with Galois group $W$. We apply Theorem \ref{thm:main3} to  $E=mK_X$ and the group $W$. Since $\#(I\cap W)=1$, $n_I=0$ and $E_0=mK_X$ on $Y=X$. As in Table \ref{tab:canonicalmodular}, if $\epsilon=-\epsilon'$ then $Z_{\mathrm{br}}=\{z_1,z_2\}$, and if $\epsilon=\epsilon'$ then $Z_{\mathrm{br}}=\emptyset$. In the former case, we use that $E_0=mK_X$, together with the information from Table \ref{tab:canonicalmodular}, to obtain that $\ell_{y(z_i),0}=\delta_m$ for $i=1,2$.
Let $0\le a\le \#W-1$. Using that, for $\epsilon=-\epsilon'$ and $i=1,2$, $\mu_{a,-1}(y(z_i))=\mu_{a,1}(y(z_i))=1$ if and only if $a$ is odd, we get from Equations (\ref{eq:naj}) and (\ref{eq:nab}) that 
$$n_{a,1}^{(W)}-n_0(W) = \left\{\begin{array}{cl}
2\left(\delta_m-\frac{1}{2}\right),&\mbox{$\epsilon=-\epsilon'$ and $a$ odd},\\
0,&\mbox{$\epsilon=\epsilon'$ or $a$ even}.
\end{array}\right.$$
This implies 
\begin{equation}
\label{eq:WBrauer1}
\mathrm{Res}_W^G\,\HH^0(X,\Omega_X^{\otimes m})\cong
\delta_{\epsilon,-\epsilon'} \,(2\delta_m-1)\bigoplus_{t=0}^{\lfloor\frac{\#W}{2}-1\rfloor} S_{2t+1}^{(W)} \;\oplus\; n_0(W) \,kW.
\end{equation}
Let $\xi_w$ be a fixed primitive $(\#W)^{\mathrm{th}}=(\frac{\ell+\epsilon}{2})^{\mathrm{th}}$ root of unity so that,  for $0\le a\le \#W-1$, $w$ acts as multiplication by $\xi_w^a$ on $S_a^{(W)}$. Then Equation (\ref{eq:WBrauer1}) implies, for $1\le j <\frac{\ell+\epsilon}{4}$,
$$\beta(\HH^0(X,\Omega_X^{\otimes m}))(w^j)=\delta_{\epsilon,-\epsilon'} \,(2\delta_m-1)\sum_{t=0}^{\lfloor\frac{\#W}{2}-1\rfloor} \xi_w^{j(2t+1)}=\delta_{\epsilon,-\epsilon'} \,(2\delta_m-1)\,\xi_w^j\,\sum_{t=0}^{\lfloor\frac{\#W}{2}-1\rfloor} (\xi_w^2)^{jt}=0,$$
where the last equality follows, since, if $\epsilon=-\epsilon'$ then $\#W$ is even, and hence $\lfloor\frac{\#W}{2}-1\rfloor=\frac{\# W}{2}-1$ and $\xi_{w}^2$ is a primitive $(\frac{\# W}{2})^{\mathrm{th}}$ root of unity.


\subsection{The $kG$-module structure of the holomorphic poly-differentials}
\label{ss:fullmodular}

We use the same strategy as in \cite[\S6.4]{BleherChinburgKontogeorgis2020}, i.e. we determine the $kG$-module structure of $\HH^0(X,\Omega_X^{\otimes m})$ using \S\ref{ss:Greencorrespondence} and \S\ref{ss:brauer}. In \S\ref{sss:Green_mixed1}-\ref{sss:Green_equal4}, we already determined the non-projective indecomposable $kG$-modules, with their multiplicities, that are direct  summands of $\HH^0(X,\Omega_X^{\otimes m})$ and we showed that these are as stated in parts (i)(1) - (i)(4) of Theorem \ref{thm:modularresult}. Hence, to determine the full $kG$-module structure of $\HH^0(X,\Omega_X^{\otimes m})$, it remains to determine the decomposition of the projective $kG$-module $Q_\ell$ in part (i) of Theorem \ref{thm:modularresult} into a direct sum of projective indecomposable $kG$-modules. We will do this in \S\ref{sss:full_mixed} and \S\ref{sss:full_equal}. The remainder of Theorem \ref{thm:modularresult} is then proved based on these results, using similar arguments as in \cite[\S6.5]{BleherChinburgKontogeorgis2020}.

As in \cite[\S6.4]{BleherChinburgKontogeorgis2020}, let $\widetilde{\beta}$ denote the Brauer character of the largest projective direct summand of $\HH^0(X,\Omega_X^{\otimes m})$. In other words, $\widetilde{\beta}$ is the Brauer character of the projective $kG$-module $Q_\ell$ in part (i) of Theorem \ref{thm:modularresult}. Let $\mathrm{IBr}(kG)$ denote the set of Brauer characters of simple $kG$-modules, and for each $\phi\in \mathrm{IBr}(kG)$, let $E(\phi)$ be a simple $kG$-module with Brauer character $\phi$ and let $P(G,E(\phi))$ be a projective $kG$-module cover of $E(\phi)$. 

The strategy for \S\ref{sss:full_mixed} and \S\ref{sss:full_equal} is as follows. We will first determine the Brauer character $\widetilde{\beta}$, by using Equations (\ref{eq:BrauerValue_rb}) - (\ref{eq:Br4}), together with the description of the non-projective indecomposable direct $kG$-module summands of $\HH^0(X,\Omega_X^{\otimes m})$ from \S\ref{sss:Green_mixed1} - \S\ref{sss:Green_equal4}. For all $\phi \in \mathrm{IBr}(kG)$, we will then calculate
\begin{equation}
\label{eq:innerell}
\langle \widetilde{\beta},\phi\rangle =\frac{1}{\#G}\sum_{x\in G_3'}\widetilde{\beta}(x)\phi(x^{-1})
\end{equation}
where $G_3'$ denotes the set of $3$-regular elements of $G$. Since $\langle \widetilde{\beta},\phi\rangle$ equals the multiplicity of the projective $kG$-module  $P(G,E(\phi))$ as a direct summand of $Q_\ell$, we obtain
\begin{equation}
\label{eq:Qell!}
Q_\ell=\bigoplus_{\phi\in \mathrm{IBr}(kG)} \langle \widetilde{\beta},\phi\rangle\, P(G,E(\phi))
\end{equation}
which will lead to the precise $kG$-module structure of $\HH^0(X,\Omega_X^{\otimes m})$.

It follows from \cite[\S III-\S VI]{Burkhardt1976} that all elements in $\mathrm{IBr}(kG)$ can be described in terms of ordinary irreducible characters of $G$. To give these descriptions in \S\ref{sss:full_mixed} and \S\ref{sss:full_equal}, we list the relevant ordinary irreducible characters, together with their values at all $3$-regular conjugacy classes, in Table \ref{tab:ordinarycharsmodular}. Here $\xi_{n'}$ is a fixed primitive $(n')^{\mathrm{th}}$ root of unity and $\xi_w$ is a fixed primitive $(\frac{\ell+\epsilon}{2})^{\mathrm{th}}$ root of unity. 
Note that Table \ref{tab:ordinarycharsmodular} combines all cases of $\epsilon,\epsilon'\in\{\pm 1\}$ and uses the labels of the ordinary characters from \cite[\S IV]{Burkhardt1976}, where we replace $\delta^*$ by $\widetilde{\delta}^*$ to avoid confusion with other uses of $\delta$.
\renewcommand{\arraystretch}{1.25}
\begin{table}[ht]
\caption{Restrictions of important ordinary irreducible characters of $G=\mathrm{PSL}(2,\mathbb{F}_\ell)$ to the $3$-regular conjugacy classes.}
\label{tab:ordinarycharsmodular}
$\begin{array}{c||c|c|c|c|c}
& 1_G &r_b& s &(v'')^i&w^j 
\\ \hline\hline
\widetilde{\delta}_0^*&\ell+\epsilon&\epsilon&\epsilon\,|\epsilon'+\epsilon|&2\,\epsilon&0
\\ \hline
\widetilde{\delta}_t^*&\ell+\epsilon&\epsilon&\epsilon\,|\epsilon'+\epsilon|\, (-1)^t&\epsilon\left(\xi_{n'}^{it}+\xi_{n'}^{-it} \right)&0
\\ \hline
\gamma_a&\frac{\ell+\epsilon'}{2}&\frac{\epsilon'+(-1)^{a+b}\sqrt{\epsilon'\ell}}{2}&\epsilon'(-1)^{(\ell-\epsilon')/4}&\epsilon'\frac{|\epsilon'+\epsilon|}{2}(-1)^i&\epsilon'\frac{|\epsilon'-\epsilon|}{2}(-1)^j
\\ \hline
\eta_u^G&\ell-\epsilon&-\epsilon&-\epsilon\,|\epsilon'-\epsilon|\,(-1)^u&0&-\epsilon\left(\xi_w^{ju}+\xi_w^{-ju}\right)
\end{array}$\\[1ex]
where $a,b\in\{1,2\}$, $1\le i,t< \frac{n'}{2}$, $1\le j,u< \frac{\ell+\epsilon}{4}$.
\end{table}
\renewcommand{\arraystretch}{1}


\subsubsection{The largest projective direct $kG$-module summand of $\HH^0(X,\Omega_X^{\otimes m})$ when $\epsilon=-\epsilon'$}
\label{sss:full_mixed}

We use \cite[\S IV and \S V]{Burkhardt1976} to give a description of $\mathrm{IBr}(kG)$ using the restrictions of the ordinary irreducible characters in Table \ref{tab:ordinarycharsmodular}.
Let $\psi_0$ denote the Brauer character of the trivial simple $kG$-module $T_0$. If $\epsilon=-1$ then $\widetilde{\delta}_0^*$ gives the Brauer character of the simple $kG$-module $\widetilde{T}_0$. If $\epsilon=1$ then the Brauer character of the simple $kG$-module $T_1$ is given by $\psi_1=\widetilde{\delta}_0^*-\psi_0$. In both cases, for $1\le t\le \frac{n'-1}{2}$, the Brauer character of the simple $kG$-module $\widetilde{T}_t$ is given by $\widetilde{\delta}_t^*$.  There are $1+\frac{\ell+\epsilon}{4}$ additional Brauer characters of simple $kG$-modules that are also projective, given by $\gamma_a$, $a\in\{1,2\}$, and $\eta_u^G$, $1\le u\le \frac{\ell+\epsilon}{4}-1$. Therefore, if $\epsilon=-\epsilon'$ then
$$\mathrm{IBr}(kG)=\left\{\textstyle\psi_0, \psi_0',\widetilde{\delta}_t^*, \gamma_a, \eta_u^G\;:\;1\le t\le \frac{n'-1}{2}, 1\le a\le 2, 1\le u\le \frac{\ell+\epsilon}{4}-1\right\}$$
where $\psi_0'=\widetilde{\delta}_0^*$ if $\epsilon=-1$ and $\psi_0'=\psi_1$ if $\epsilon=1$, i.e. $\psi_0'=\widetilde{\delta}_0^* - \frac{1+\epsilon}{2}\psi_0$.

We determine the Brauer character $\widetilde{\beta}$ by using Equations (\ref{eq:BrauerValue_rb}) - (\ref{eq:Br4}), together with Table \ref{tab:ordinarycharsmodular}, applied to the composition factors of the non-projective indecomposable direct summands of $\HH^0(X,\Omega_X^{\otimes m})$ from \S\ref{sss:Green_mixed1} and \S\ref{sss:Green_mixed2}, which are as in parts (i)(1) and (i)(2) of Theorem \ref{thm:modularresult}. 

Suppose first that $(\epsilon,\epsilon')=(-1,1)$, so we are in part (i)(1) of Theorem \ref{thm:modularresult}. Let $\beta$ be the Brauer character of $\HH^0(X,\Omega_X^{\otimes m})$. Using Notation \ref{not:PSL}, together with the above description of $\mathrm{IBr}(kG)$, it follows that
\begin{eqnarray*}
\widetilde{\beta} &=& \beta 
- \delta_0(1-\delta_m)\left((3^{n-1}+1)\widetilde{\delta}_0^*+2\psi_0\right)
- \delta_0\delta_m\left(3^{n-1}\widetilde{\delta}_0^*\right) 
- \delta_1\delta_m\left(\frac{3^{n-1}+1}{2}\,\widetilde{\delta}_0^*+\psi_0\right) \\
&& - \delta_1(1-\delta_m)\left(\frac{3^{n-1}+1}{2}\,\widetilde{\delta}_0^*+\psi_0\right) 
- \sum_{t=1}^{(n'-1)/2}\delta_0 \left(2\cdot3^{n-1}\widetilde{\delta}_t^*\right)
- \sum_{t=1}^{(n'-1)/2}\delta_1 \left(3^{n-1}\widetilde{\delta}_t^* \right).
\end{eqnarray*}
To evaluate $\widetilde{\beta}$ at, for example, $(v'')^i$, for $1\le i\le \frac{n'-1}{2}$, we use Equation (\ref{eq:Br3}) and Table \ref{tab:ordinarycharsmodular} to obtain
\begin{eqnarray*}
\widetilde{\beta}((v'')^i) &=& 0
- \delta_0(1-\delta_m)\left((3^{n-1}+1)2\epsilon+2\right)
- \delta_0\delta_m\left(3^{n-1}2\epsilon\right) 
- \delta_1\delta_m\left(\frac{3^{n-1}+1}{2}2\epsilon+1\right)  \\
&& - \delta_1(1-\delta_m)\left(\frac{3^{n-1}+1}{2}2\epsilon+1\right) 
-  \sum_{t=1}^{(n'-1)/2}\left(\delta_0\, 2 \cdot 3^{n-1}+\delta_1\,3^{n-1}\right)\epsilon \left(\xi_{n'}^{it}+\xi_{n'}^{-it} \right)\\
&=&-\left(2\delta_0(1-\delta_m)+\delta_1\right)(1+\epsilon)  -3^{n-1}\epsilon(2\delta_0+\delta_1)
-3^{n-1}\epsilon(2\delta_0+\delta_1)\sum_{t=1}^{n'-1} \xi_{n'}^{it}
\end{eqnarray*}
where the second equality follows since $\xi_{n'}^{-it}=\xi_{n'}^{i(n'-t)}$. Since $\epsilon=-1$ and since $\xi_{n'}^i$ is an $(n')^{\mathrm{th}}$ root of unity, i.e. $\sum_{t=0}^{n'-1}(\xi_{n'})^{it}=0$, this leads to $\widetilde{\beta}((v'')^i)=0$.
Using similar computations to evaluate $\widetilde{\beta}$ at all $3$-regular elements of $G$, we obtain the following values when $(\epsilon,\epsilon')=(-1,1)$:
\begin{eqnarray*}
\widetilde{\beta}(1_G) &=&(2m-1)\,\frac{(\ell^2-1)(\ell-6)}{24}-\frac{\ell+1}{2}(2\delta_0(1-\delta_m)+\delta_1)-\frac{\ell^2-1}{12}(2\delta_0+\delta_1), \\
\widetilde{\beta}(r_b)&=&-\frac{\ell-1}{4}+\frac{m_\ell+(-1)^b\sum_{d=1}^{m_\ell}\left(\frac{d}{\ell}\right)\sqrt{\ell\,}}{2}+\frac{\ell+1}{12}(2\delta_0+\delta_1)-\frac{2\delta_0(1-\delta_m)+\delta_1}{2},\\
\widetilde{\beta}(s)&=&(1-2\delta_m)\frac{\ell-1}{4}-(2\delta_0(1-\delta_m)+\delta_1),\\
\widetilde{\beta} ((v'')^i)&=&0,\\
\widetilde{\beta}(w^j)&=&-(2\delta_0(1-\delta_m)+\delta_1),
\end{eqnarray*}
for $b\in\{1,2\}$, $1\le i\le \frac{n'-1}{2}$, and $1\le j \le \frac{\ell-5}{4}$.

To determine $\langle \widetilde{\beta} , \phi\rangle$ for all $\phi\in\mathrm{IBr}(kG)$ when $(\epsilon,\epsilon')=(-1,1)$, we use Equation (\ref{eq:innerell}), together with the conjugacy class lengths from Table \ref{tab:3regular}. For example, we have, for $0\le t\le \frac{n'-1}{2}$:
\begin{eqnarray*}
\langle \widetilde{\beta} ,\widetilde{\delta}_t^* \rangle &=&\frac{1}{\#G}
\left\{\widetilde{\beta}(1_G)\widetilde{\delta}_t^*(1_G) + \frac{\ell^2-1}{2}\widetilde{\beta}(r_1)\widetilde{\delta}_t^*(r_1^{-1}) + \frac{\ell^2-1}{2}\widetilde{\beta}(r_2)\widetilde{\delta}_t^*(r_2^{-1})  \right\}\\  
&=& \frac{2}{\ell(\ell^2-1)}
\left\{ \left[ (2m-1)\,\frac{(\ell^2-1)(\ell-6)}{24}-\frac{\ell+1}{2}(2\delta_0(1-\delta_m)+\delta_1)-\frac{\ell^2-1}{12}(2\delta_0+\delta_1)\right] (\ell-1)\right.\\
&&\;\;\qquad+\frac{\ell^2-1}{2}\left[-\frac{\ell-1}{4}+\frac{m_\ell-\sum_{d=1}^{m_\ell}\left(\frac{d}{\ell}\right)\sqrt{\ell\,}}{2}+\frac{\ell+1}{12}(2\delta_0+\delta_1)-\frac{2\delta_0(1-\delta_m)+\delta_1}{2}\right](-1)\\
&&\;\;\qquad\left.+\frac{\ell^2-1}{2}\left[-\frac{\ell-1}{4}+\frac{m_\ell+\sum_{d=1}^{m_\ell}\left(\frac{d}{\ell}\right)\sqrt{\ell\,}}{2}+\frac{\ell+1}{12}(2\delta_0+\delta_1)-\frac{2\delta_0(1-\delta_m)+\delta_1}{2}\right](-1)\right\}\\
&=&\frac{(2m-1)(\ell-7)-4(2\delta_0+\delta_1)+6}{12} + \frac{m-1-m_\ell}{\ell}.
\end{eqnarray*}

Performing similar computations in the case when $(\epsilon,\epsilon')=(1,-1)$, we obtain the following result, which gives a detailed description of $Q_\ell$ in Theorem \ref{thm:modularresult}(i)(1),(2).

\begin{subprop}
\label{prop:full_mixed}
Suppose $\epsilon=-\epsilon'$ in Notation $\ref{not:modular3}$. Let $\widetilde{\beta}$ be the Brauer character of the largest projective direct $kG$-module summand of $\HH^0(X,\Omega_X^{\otimes m})$, which is denoted by $Q_\ell$  in Theorem $\ref{thm:modularresult}(i)$. Then 
$Q_\ell$ is as in Equation $(\ref{eq:Qell!})$,
where $\langle \widetilde{\beta},\phi\rangle$ is given by:
\begin{eqnarray*}
\label{eq:psi06.5.1}
\langle \widetilde{\beta},\psi_0\rangle&=&\frac{m-2-2(2\delta_0+\delta_1)+3\delta_m(2\delta_0-1)}{6}-\frac{m-1-m_\ell}{\ell},\\
\label{eq:psi0'6.5.1}
\langle \widetilde{\beta},\psi_0'\rangle&=&\frac{(2m-1)(\ell-6+\epsilon)-4(\frac{3-\epsilon}{2}\delta_0+\frac{3+\epsilon}{2}\delta_1)-6\epsilon}{12}-\epsilon\frac{m-1-m_\ell}{\ell} - \frac{1+\epsilon}{2}\,\langle \widetilde{\beta},\psi_0\rangle,\\
\label{eq:delta6.5.1}
\langle \widetilde{\beta},\widetilde{\delta}_t^*\rangle&=&\frac{(2m-1)(\ell-6+\epsilon)-4(\frac{3-\epsilon}{2}\delta_0+\frac{3+\epsilon}{2}\delta_1)-6\epsilon}{12}-\epsilon\frac{m-1-m_\ell}{\ell},\\
\label{eq:gamma6.5.1}
\langle \widetilde{\beta},\gamma_a\rangle&=&\frac{(2m-1)(\ell-6-\epsilon)+6\epsilon\left(1-(1-2\delta_m) (-1)^{(\ell-\epsilon')/4}\right)}{24} + \epsilon \frac{m-1-m_\ell}{2\ell}\\
\nonumber
&&+ \frac{(-1)^{a-1}}{2}  \left(\frac{1+\epsilon}{2}h_\ell - \sum_{d=1}^{m_\ell}\left(\frac{d}{\ell}\right)\right),\\
\label{eq:eta6.5.1}
\langle \widetilde{\beta}, \eta_u^G\rangle&=&\frac{(2m-1)(\ell-6-\epsilon)+6\epsilon\left(1-(1-2\delta_m) (-1)^u\right)}{12} +\epsilon \frac{m-1-m_\ell}{\ell},
\end{eqnarray*}
\noindent for $1\le t\le \frac{n'-1}{2}$, $a\{1,2\}$, and $1\le u\le \frac{\ell+\epsilon}{4}-1$.
\end{subprop}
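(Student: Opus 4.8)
\textbf{Proof proposal for Proposition \ref{prop:full_mixed}.}
The plan is to compute the Brauer character $\widetilde\beta$ of the maximal projective direct $kG$-module summand $Q_\ell$ of $M:=\HH^0(X,\Omega_X^{\otimes m})$ and then recover its multiplicities by the orthogonality formula $(\ref{eq:innerell})$. Since $Q_\ell$ is projective, $\langle\widetilde\beta,\phi\rangle$ equals the multiplicity of $P(G,E(\phi))$ as a direct summand of $Q_\ell$, so proving the five displayed formulas is equivalent to proving Equation $(\ref{eq:Qell!})$ in this case. As a preliminary, I would record $\mathrm{IBr}(kG)$ when $\epsilon=-\epsilon'$, using \cite[\S IV,\S V]{Burkhardt1976}: the non-projective simple Brauer characters are $\psi_0$, $\psi_0'=\widetilde\delta_0^*-\tfrac{1+\epsilon}{2}\psi_0$ and $\widetilde\delta_t^*$ for $1\le t\le\tfrac{n'-1}{2}$, and the projective simple Brauer characters are $\gamma_a$ ($a\in\{1,2\}$) and $\eta_u^G$ ($1\le u\le\tfrac{\ell+\epsilon}{4}-1$); this matches the list of $\phi$ in the statement.

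First I would compute $\widetilde\beta=\beta-\sum(\text{Brauer characters of the non-projective indecomposable direct summands of }M)$, where $\beta$ is the Brauer character of $M$ given in Equations $(\ref{eq:BrauerValue_rb})$--$(\ref{eq:Br4})$. By \S\ref{sss:Green_mixed1}--\S\ref{sss:Green_mixed2}, equivalently parts (i)(1) and (i)(2) of Theorem \ref{thm:modularresult}, those summands are the explicit uniserial modules $U^{(G)}_{T,b}$ and, when $\epsilon=-1$, the non-uniserial modules $U^{(G)}_{T_0,T_0,3^{n-1}+1}$, $U^{(G)}_{T_0,\widetilde T_0,(3^{n-1}+1)/2}$, $U^{(G)}_{\widetilde T_0,T_0,(3^{n-1}+1)/2}$, with multiplicities $\delta_i(1-\delta_m)$, $\delta_i\delta_m$, $\delta_i$. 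For each module I read off its composition factors from Notation \ref{not:PSL}: a uniserial $U^{(G)}_{T,b}$ has $b$ factors cyclically permuting the principal-block simples, while $U^{(G)}_{T_0,T_0,b}$ (resp. $U^{(G)}_{T_0,\widetilde T_0,b}$, resp. $U^{(G)}_{\widetilde T_0,T_0,b}$) has $b$ copies of $\widetilde T_0$ together with $b+1$ (resp. $b$, resp. $b+1$) copies of $T_0$. Translating these into Brauer characters via $\psi_0$ and $\widetilde\delta_t^*$ and evaluating at the $3$-regular class representatives $1_G,r_b,s,(v'')^i,w^j$ using Table \ref{tab:ordinarycharsmodular}, I obtain $\widetilde\beta$ class by class, exactly as in the sample computation preceding the statement; the vanishing $\widetilde\beta((v'')^i)=\widetilde\beta(w^j)=0$ uses $\sum_{t=0}^{n'-1}\xi_{n'}^{it}=0$ and the analogous identity for $\xi_w$.

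Next I would compute $\langle\widetilde\beta,\phi\rangle$ for each $\phi$ by inserting the five values of $\widetilde\beta$ and of $\phi$ into $(\ref{eq:innerell})$ and weighting by the class lengths from Table \ref{tab:3regular}. For $\psi_0$, $\psi_0'$ and $\widetilde\delta_t^*$ only the classes $1_G$, $r_1$, $r_2$ (and, for $\psi_0,\psi_0'$, also $s$) contribute after weighting; the $\sqrt{\pm\ell}$ contributions to $\widetilde\beta(r_b)$ coming from the Gauss-sum computation of \S\ref{sss:brauerell} cancel between $r_1$ and $r_2$ for $\psi_0$ and $\widetilde\delta_t^*$, but survive with sign $(-1)^{a-1}$ for $\gamma_a$, producing the term $\tfrac{(-1)^{a-1}}{2}\bigl(\tfrac{1+\epsilon}{2}h_\ell-\sum_{d=1}^{m_\ell}(\tfrac d\ell)\bigr)$, and likewise the $(-1)^u$ dependence for $\eta_u^G$. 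Carrying out the two sub-cases $(\epsilon,\epsilon')=(-1,1)$ and $(1,-1)$ and organizing the answers in terms of the coefficients $\tfrac{1\pm\epsilon}{2}$ yields the uniform closed forms in the statement, and $(\ref{eq:Qell!})$ then gives the full decomposition of $Q_\ell$, completing the determination of the $kG$-module structure of $M$.

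The main obstacle is not conceptual but bookkeeping: the genuine care is needed in (i) the composition-factor multiplicities of the \emph{non-uniserial} indecomposables $U^{(G)}_{T_0,T_0,b}$, $U^{(G)}_{T_0,\widetilde T_0,b}$, $U^{(G)}_{\widetilde T_0,T_0,b}$, where the ``$+1$'' discrepancies between the socle and top counts feed directly into $\langle\widetilde\beta,\psi_0\rangle$ and $\langle\widetilde\beta,\psi_0'\rangle$, and (ii) keeping the Gauss-sum data consistent between $r_1$ and $r_2$ — the choice of $\sqrt{\epsilon'\ell\,}$ and the role of the class number $h_\ell=h_{\mathbb{Q}(\sqrt{-\ell})}$ — exactly as fixed in \S\ref{sss:brauerell}. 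Once these are pinned down, the remaining steps are routine arithmetic with the character tables.
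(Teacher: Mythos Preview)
Your plan is exactly the paper's: subtract from $\beta$ the Brauer characters of the non-projective summands listed in Theorem~\ref{thm:modularresult}(i)(1),(2), then pair $\widetilde\beta$ against each $\phi\in\mathrm{IBr}(kG)$ via Equation~(\ref{eq:innerell}) with the class lengths of Table~\ref{tab:3regular}. Two concrete claims in your execution are wrong, however, and if carried through they would corrupt every displayed formula.

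First, the composition-factor counts for the principal-block indecomposables when $\epsilon=-1$ are not what you state. In the Brauer tree of $B_0$ the exceptional vertex is adjacent only to $\widetilde T_0$, so the uniserial $U^{(G)}_{\widetilde T_0,b}$ has \emph{all} $b$ composition factors isomorphic to $\widetilde T_0$ (not ``cyclically permuting'' $T_0$ and $\widetilde T_0$), and the non-uniserial modules have very few copies of $T_0$: $U^{(G)}_{T_0,T_0,b}$ has exactly $2$ composition factors isomorphic to $T_0$, while $U^{(G)}_{T_0,\widetilde T_0,b}$ and $U^{(G)}_{\widetilde T_0,T_0,b}$ each have exactly $1$. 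This is why, in the case $(\epsilon,\epsilon')=(-1,1)$, the paper subtracts $\delta_0(1-\delta_m)\bigl((3^{n-1}+1)\widetilde\delta_0^*+2\psi_0\bigr)$, $\delta_0\delta_m\,(3^{n-1}\widetilde\delta_0^*)$, $\delta_1\delta_m\bigl(\tfrac{3^{n-1}+1}{2}\widetilde\delta_0^*+\psi_0\bigr)$, etc. Your counts $b+1$, $b$, $b+1$ for the $T_0$-multiplicity would make $\widetilde\beta(1_G)$ off by a term of order $3^{n-1}$.

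Second, $\widetilde\beta(w^j)$ does \emph{not} vanish. From Table~\ref{tab:ordinarycharsmodular} one has $\widetilde\delta_t^*(w^j)=0$ but $\psi_0(w^j)=1$, so after subtracting the non-projective part one gets $\widetilde\beta(w^j)=-(2\delta_0(1-\delta_m)+\delta_1)$ when $(\epsilon,\epsilon')=(-1,1)$; this term is exactly what produces the $(-1)^u$-dependence in $\langle\widetilde\beta,\eta_u^G\rangle$ and the $(-1)^{(\ell-\epsilon')/4}$-dependence in $\langle\widetilde\beta,\gamma_a\rangle$. With these two corrections your outline goes through verbatim and matches the paper's computation.
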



\subsubsection{The largest projective direct $kG$-module summand of $\HH^0(X,\Omega_X^{\otimes m})$ when $\epsilon=\epsilon'$}
\label{sss:full_equal}
We use \cite[\S III and \S VI]{Burkhardt1976} to give a description of $\mathrm{IBr}(kG)$ using the restrictions of the ordinary irreducible characters in Table \ref{tab:ordinarycharsmodular}.
Let $\psi_0$ denote the Brauer character of the trivial simple $kG$-module $T_0$. If $\epsilon=1$ then the Brauer character of the simple $kG$-module $T_1$ is given by $\psi_1=\widetilde{\delta}_0^*-\psi_0$. If $\epsilon=-1$ then $\widetilde{\delta}_0^*$ gives the Brauer character of the simple $kG$-module $\widetilde{T}_0$. In both cases, for $1\le t\le \frac{n'}{2}-1$, the Brauer character of the simple $kG$-module $\widetilde{T}_t$ is given by $\widetilde{\delta}_t^*$.  Moreover, the Brauer characters of the simple $kG$-modules $T_{0,1}$ and $T_{1,0}$ are given by $\gamma_1$ and $\gamma_2$. Note that these characters only differ with respect to their values on the elements of order $\ell$ in $G$. Since we have already chosen a square root of $\epsilon'\ell$ to obtain Equations (\ref{eq:Gauss1}) and (\ref{eq:BrauerValue_rb}), we define $s_{01}\in\{\pm 1\}$ such that the Brauer character $\psi_{0,1}$ of $T_{0,1}$ satisfies
\begin{equation}
\label{eq:wretcheddelta_equal}
\psi_{0,1}(r_1)=\frac{\epsilon'+s_{01}\sqrt{\epsilon'\ell}}{2}.
\end{equation}
There are $\frac{\ell+\epsilon-2}{4}$ additional Brauer characters of simple $kG$-modules that are also projective, given by $\eta_u^G$, $1\le u\le \frac{\ell+\epsilon-2}{4}$. Therefore, if $\epsilon=\epsilon'$ then
$$\mathrm{IBr}(kG)=\left\{\textstyle \psi_0, \psi_0',\psi_{0,1},\psi_{1,0},\widetilde{\delta}_t^*, \eta_u^G\;:\;1\le t\le \frac{n'}{2}-1, 1\le u\le \frac{\ell+\epsilon-2}{4}\right\}$$
where $\psi_0'=\psi_1$ if $\epsilon=1$ and $\psi_0'=\widetilde{\delta}_0^*$ if $\epsilon=-1$, i.e. $\psi_0'=\widetilde{\delta}_0^* - \frac{1+\epsilon}{2}\psi_0$.

Similarly to \S\ref{sss:full_mixed}, we determine the Brauer character $\widetilde{\beta}$ by using Equations (\ref{eq:BrauerValue_rb}) - (\ref{eq:Br4}), together with Table \ref{tab:ordinarycharsmodular}, applied to the composition factors of the non-projective indecomposable direct summands of $\HH^0(X,\Omega_X^{\otimes m})$ from \S\ref{sss:Green_equal3} and \S\ref{sss:Green_equal4}, which are as in parts (i)(3) and (i)(4) of Theorem \ref{thm:modularresult}.

Performing similar computations as in \S\ref{sss:full_mixed}, we obtain the following result, which gives a detailed description of $Q_\ell$ in Theorem \ref{thm:modularresult}(i)(3),(4).

\begin{subprop}
\label{prop:full_equal}
Suppose $\epsilon=\epsilon'$ in Notation $\ref{not:modular3}$. Let $\widetilde{\beta}$ be the Brauer character of the largest projective direct $kG$-module summand of $\HH^0(X,\Omega_X^{\otimes m})$, which is denoted by $Q_\ell$  in Theorem $\ref{thm:modularresult}(i)$. Then 
$Q_\ell$ is as in Equation $(\ref{eq:Qell!})$,
where $\langle \widetilde{\beta},\phi\rangle$ is given by:
\begin{eqnarray*}
\label{eq:psi06.5.2}
\langle \widetilde{\beta},\psi_0\rangle&=&\frac{m-2-2(2\delta_0+\delta_1)+3\delta_m(2\delta_0-1)}{6}-\frac{m-1-m_\ell}{\ell},\\
\label{eq:psi0'6.5.2}
\langle \widetilde{\beta},\psi_0'\rangle&=&\frac{(2m-1)(\ell-6+\epsilon)-4(\frac{3-\epsilon}{2}\delta_0+\frac{3+\epsilon}{2}\delta_1)-12\epsilon\delta_m}{12}-\epsilon \frac{m-1-m_\ell}{\ell}  - \frac{1+\epsilon}{2}\,\langle \widetilde{\beta},\psi_0\rangle,\\
\label{eq:psi016.5.2}
\langle \widetilde{\beta},\psi_{i,1-i}\rangle&=&\frac{(2m-1)(\ell-6+\epsilon)-4(\frac{3-\epsilon}{2}\delta_0+\frac{3+\epsilon}{2}\delta_1)}{24} \\
\nonumber
&&+\frac{-6\epsilon\left(1 - (1-2\delta_m)\left((-1)^{(\ell-\epsilon')/4}-(-1)^i\left((1+\epsilon)\delta_0 + (1-\epsilon)\delta_1\right)\right)\right)}{24} -\epsilon  \frac{m-1-m_\ell}{2\ell} \\
\nonumber
&&+ \frac{(-1)^is_{01}}{2}\left(\frac{1-\epsilon}{2}h_\ell - \sum_{d=1}^{m_\ell}\left(\frac{d}{\ell}\right)\right),\\
\label{eq:delta6.5.2}
\langle \widetilde{\beta},\widetilde{\delta}_t^*\rangle&=&\frac{(2m-1)(\ell-6+\epsilon)-4(\frac{3-\epsilon}{2}\delta_0+\frac{3+\epsilon}{2}\delta_1)-6\epsilon\left(1-(1-2\delta_m)(-1)^t\right)}{12} 
 -\epsilon\frac{m-1-m_\ell}{\ell} ,\\
\label{eq:eta6.5.2}
\langle \widetilde{\beta}, \eta_u^G\rangle&=&\frac{(2m-1)(\ell-6-\epsilon)+6\epsilon}{12} + \epsilon\frac{m-1-m_\ell}{\ell},
\end{eqnarray*}
for $i\in\{0,1\}$, $1\le t\le \frac{n'}{2}-1$, and $1\le u\le \frac{\ell+\epsilon-2}{4}$.
\end{subprop}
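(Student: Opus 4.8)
The plan is to mirror the argument already carried out for Proposition~\ref{prop:full_mixed} in \S\ref{sss:full_mixed}, now under the hypothesis $\epsilon=\epsilon'$, splitting into the sub-cases $\epsilon=\epsilon'=1$ (the situation of \S\ref{sss:Green_equal3}) and $\epsilon=\epsilon'=-1$ (the situation of \S\ref{sss:Green_equal4}). The first step is to determine the Brauer character $\widetilde\beta$ of the largest projective direct summand $Q_\ell$ of $\HH^0(X,\Omega_X^{\otimes m})$. By \S\ref{sss:Green_equal3}--\S\ref{sss:Green_equal4}, the non-projective indecomposable direct summands of $\HH^0(X,\Omega_X^{\otimes m})$ are exactly those listed in parts (i)(3) and (i)(4) of Theorem~\ref{thm:modularresult}; using Notation~\ref{not:PSL} to read off their composition factors and the description of $\mathrm{IBr}(kG)$ in terms of the ordinary characters of Table~\ref{tab:ordinarycharsmodular}, one writes the Brauer character of each such summand as an explicit non-negative integer combination of $\psi_0,\psi_0',\psi_{0,1},\psi_{1,0},\widetilde\delta_t^*$. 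Subtracting these from the Brauer character $\beta$ of $\HH^0(X,\Omega_X^{\otimes m})$, whose values at the $3$-regular classes are Equations~(\ref{eq:BrauerValue_rb})--(\ref{eq:Br4}) (the value at $1_G$ coming from the Riemann--Roch theorem and the genus formula of \cite[Cor.~3.2]{BendingCaminaGuralnick2005}), produces a closed form for $\widetilde\beta$ at $1_G$, $r_1$, $r_2$, $s$, $(v'')^i$ and $w^j$ depending only on $m$, on $m_\ell$, on the parities $\delta_0,\delta_1,\delta_m$, and on $\epsilon=\epsilon'$.

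The second step is to compute $\langle\widetilde\beta,\phi\rangle$ for each $\phi\in\mathrm{IBr}(kG)$ via Equation~(\ref{eq:innerell}), summing over the $3$-regular conjugacy class representatives of Table~\ref{tab:3regular} with the class lengths recorded there and with the character values of Table~\ref{tab:ordinarycharsmodular}. Since $\langle\widetilde\beta,\phi\rangle$ is the multiplicity of $P(G,E(\phi))$ as a direct summand of $Q_\ell$, this yields Equation~(\ref{eq:Qell!}) and hence the asserted formulas. The contributions of the classes $(v'')^i$ and $w^j$ drop out: both $\beta$ and $\widetilde\beta$ vanish there by Equations~(\ref{eq:Br3})--(\ref{eq:Br4}) and the same cyclotomic-sum argument used in \S\ref{sss:brauercharNotEll} and \S\ref{sss:full_mixed}, so the sums reduce to the four classes $1_G,r_1,r_2,s$, making the evaluation a finite if lengthy computation involving the Legendre-symbol sums $\sum_{d=1}^{m_\ell}\left(\tfrac{d}{\ell}\right)$ and, when $\epsilon'=-1$, the class number $h_\ell$ of $\mathbb{Q}(\sqrt{-\ell})$.

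The main obstacle I expect is the correct identification of the sign $s_{01}\in\{\pm1\}$ in Equation~(\ref{eq:wretcheddelta_equal}), together with the bookkeeping for the two half-dimensional simple modules $T_{0,1}$ and $T_{1,0}$ that occur only when $\epsilon=\epsilon'$. Their Brauer characters $\psi_{0,1},\psi_{1,0}$ differ only at the elements of order $\ell$, so whether a given $\gamma_a$ computes the multiplicity of $P(G,T_{0,1})$ or of $P(G,T_{1,0})$ is tied to the choice of square root $\sqrt{\epsilon'\ell}$ fixed in Equation~(\ref{eq:Gauss1}) for the Gauss-sum evaluation of $\beta(r_b)$, together with the relabeling of $T_{0,1},T_{1,0}$ forced by the Green correspondence in \S\ref{sss:Green_equal3}--\S\ref{sss:Green_equal4}; keeping these two conventions compatible is exactly what produces the factor $(-1)^i s_{01}$ in the formula for $\langle\widetilde\beta,\psi_{i,1-i}\rangle$. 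A secondary point needing care is the non-uniserial summands $U^{(G)}_{T_0,T_0,\cdot}$, $U^{(G)}_{T_0,\widetilde T_0,\cdot}$, $U^{(G)}_{\widetilde T_0,T_0,\cdot}$ arising when $\epsilon=\epsilon'=-1$: their composition-factor multiplicities, and hence Brauer characters, must be extracted correctly from Notation~\ref{not:PSL}(b) before the subtraction in Step~1 can be performed. Once these conventions are pinned down, the remainder is a routine, purely arithmetic computation parallel to \S\ref{sss:full_mixed}.
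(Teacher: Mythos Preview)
Your plan matches the paper's own argument exactly: subtract the Brauer characters of the non-projective summands listed in parts (i)(3) and (i)(4) of Theorem~\ref{thm:modularresult} from $\beta$ to obtain $\widetilde\beta$, then compute the inner products $\langle\widetilde\beta,\phi\rangle$ via Equation~(\ref{eq:innerell}) using Tables~\ref{tab:3regular} and~\ref{tab:ordinarycharsmodular}. The identification of the sign $s_{01}$ and the handling of the non-uniserial summands via Notation~\ref{not:PSL}(b) are indeed the two bookkeeping points that require attention, and you have located them correctly.

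There is, however, one genuine slip. Your assertion that $\widetilde\beta$ vanishes at the classes $w^j$ (and hence that the inner product reduces to the four classes $1_G,r_1,r_2,s$) is false. Already in the mixed case of \S\ref{sss:full_mixed} one has $\widetilde\beta(w^j)=-(2\delta_0(1-\delta_m)+\delta_1)$, not zero; the same phenomenon persists when $\epsilon=\epsilon'$. For instance, when $\epsilon=\epsilon'=1$ and $m\equiv 0\bmod 6$, the uniserial module $U^{(G)}_{T_0,3^{n-1}}$ has $\tfrac{3^{n-1}+1}{2}$ composition factors $T_0$ and $\tfrac{3^{n-1}-1}{2}$ factors $T_1$, contributing $\psi_0(w^j)\cdot\tfrac{3^{n-1}+1}{2}+\psi_1(w^j)\cdot\tfrac{3^{n-1}-1}{2}=1$ at $w^j$, while the other non-projective summands contribute $0$ there; thus $\widetilde\beta(w^j)=-1$. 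These terms do enter the inner products with $\psi_0$, $\psi_0'$, and $\eta_u^G$, since those characters are nonzero at $w^j$. The contribution is still tractable because $\widetilde\beta(w^j)$ is constant in $j$ and $\sum_{j=1}^{(\#W-1)/2}(\xi_w^{ju}+\xi_w^{-ju})=-1$, but you must include it; omitting it would give wrong multiplicities. Your claim that $\widetilde\beta((v'')^i)=0$ does hold, by a cyclotomic-sum cancellation analogous to the one displayed in \S\ref{sss:full_mixed}.
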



\bibliographystyle{plain}

\end{document}